\title{Joint tridiagonalization}
\author{}
    \newtheorem{theorem}{Theorem}
    \newtheorem{lemma}[theorem]{Lemma}
    \newtheorem{proposition}[theorem]{Proposition}
    \newtheorem{corollary}[theorem]{Corollary}
\theoremstyle{definition} % For roman text in the body
    \newtheorem{remark}[theorem]{Remark}
\newcommand{\eps}{\varepsilon}
\newcommand{\R}{{\mathbb R}}
\newcommand{\C}{{\mathbb C}}
\newcommand{\N}{{\mathbb N}}
\newcommand{\lstar}{{\raise-0.15ex\hbox{$\scriptstyle \ast$}}}
\theoremstyle{remark} % For an italic header, more subtle than definition style
\newcommand{\Bp}{\underline{p}}
\newcommand{\rr}{\mathbb{R}}
\newcommand{\Exp}{\mathbb{E}}
\newcommand{\tr}{\operatorname{tr}}
\newcommand{\filt}{\mathscr{F}}
\DeclareDocumentCommand \one { o }
{%
\IfNoValueTF {#1}
{\mathbf{1}  }
{\mathbf{1}\left\{ {#1} \right\} }%
}
\newcommand{\lawequals}{\overset{\mathscr{L}}{=}}
\newcommand{\Prto}{\overset{\Pr}{\to}}
\newcommand{\Span}{\operatorname{Span}}
\newcommand{\UM}{\mathfrak{U}}
\newcommand{\SCS}{s^{\mathfrak{s}}}
\newcommand{\SQ}{s^{Q}}
\newcommand{\TS}{\mathbb{T}}
\DeclareDocumentCommand \M { m d() o }
{
        \IfNoValueTF {#3}
        {
        	\IfNoValueTF {#2}
            	{ {#1} }
		{ {#1}_{(#2)} }
        }
        {
            {#1}_{#3}
        }
}
\DeclareDocumentCommand \MM { m d() d() }
{
        	\IfNoValueTF {#2}
            	{ {#1} }
		{ {#1}_{(#2)}^{(#3)} }
}
\DeclareDocumentCommand \MP { m g d() o }
{
    \IfNoValueTF {#2}
    {
        \IfNoValueTF {#4}
        {
        	\IfNoValueTF {#3}
		{{\boldsymbol #1}}
		{ {\boldsymbol #1}_{(#3)} }
        }
        {
            {\boldsymbol #1}_{#4}
        }
    }
    {
        \IfNoValueTF {#4}
        {
        	\IfNoValueTF {#3}
            	{ {#1}_{#2} }
		{ {#1}_{(#3),#2} }
        }
        {
            {#1}_{#4,#2}
        }
    }
}
\begin{document}

\title{Tridiagonal models for Dyson Brownian motion}
\author{Diane Holcomb and Elliot Paquette}

\maketitle

\begin{abstract}
In this paper, we consider tridiagonal matrices the eigenvalues of which evolve according to $\beta$-Dyson Brownian motion.  This is the stochastic gradient flow on $\R^n$ given by, for all $1 \leq i \leq n,$
\[
  d\lambda_{i,t} = \sqrt{\frac{2}{\beta}}dZ_{i,t} - \biggl( \frac{V'(\lambda_i)}{2} - \sum_{j: j \neq i} \frac{1}{\lambda_i - \lambda_j} \biggr)\,dt
\]
where $V$ is a constraining potential and $\left\{ Z_{i,t} \right\}_1^n$ are independent standard Brownian motions.  This flow is stationary with respect to the distribution
\[
  \rho^{\beta}_N(\lambda) = \frac{1}{Z^{\beta}_N} e^{-\frac{\beta}{2}
  \left( -\sum_{1 \leq i \neq j \leq N} \log|\lambda_i - \lambda_j| + \sum_{i=1}^N V(\lambda_i) \right)
}.
\]
The particular choice of $V(t)=2t^2$ leads to an eigenvalue distribution constrained to lie roughly in $(-\sqrt{n},\sqrt{n}).$  We study evolution of the entries of one choice of tridiagonal flow for this $V$ in the $n\to \infty$ limit. 

On the way describing the evolution of the tridiagonal matrices we give the derivative of the Lanczos tridiagonalization algorithm under perturbation.

\end{abstract}

\tableofcontents

\section{Introduction}

Dyson introduced his model of $N$ Brownian particles evolving in a constraining potential in 1962 \cite{Dyson62}. The particle evolution described by Dyson with quadratic potential gives the evolution of the eigenvalues a Hermitian matrix Brownian motion $M_t$. For fixed time $t>0$ the matrix $M_t$ has Gaussian Unitary Ensemble (GUE) distribution. Dyson also formulated a more general particle flow which corresponds to a larger class of point processes called $\beta$-ensembles. The $\beta$-ensembles are finite point processes with $N$ points and joint density
\begin{equation}
\label{eq:betaensemble}
  \rho^{\beta}_N(\lambda) = \frac{1}{Z^{\beta}_N} e^{-\frac{\beta}{2}
  \left( -\sum_{1 \leq i \neq j \leq N} \log|\lambda_i - \lambda_j| + \sum_{i=1}^N V(\lambda_i) \right)
},
\end{equation}
with $Z_N^{\beta}$ a scaling factor $V(x)$ the potential, and $\beta>0$. The case $V(x)=x^2/2$ and $\beta=1,2, $ and $4$ correspond to the original Gaussian ensembles and their Dyson evolution. These could be further generalized by changing the interaction term, but we restrict ourselves to this case. This joint density leads to the generalization of the Dyson Brownian motion. Let 
\begin{equation}
\label{eq:dlambda}
  d\lambda_{i,t} = \sqrt{\frac{2}{\beta}}dZ_{i,t} - \biggl( \frac{V'(\lambda_i)}{2} - \sum_{j: j \neq i} \frac{1}{\lambda_i - \lambda_j} \biggr)\,dt
\end{equation}
where $\left\{ Z_{i,t} \right\}_1^n$ are independent standard Brownian motions. Then (\ref{eq:betaensemble}) is the equilibrium measure for this process, and the flow is stationary with respect to this measure. 

There is significant existing work on Dyson's model. Rogers and Shi showed convergence to the Wigner law in the $N\to \infty$ limit for the original quadratic potential \cite{RogersShi}. Israelsson showed the fluctuations converged to a Gaussian process with later work by Bender giving the covariance structure \cite{Israelsson, Bender}. Other work on the fluctuations of the trace moments was done by Perez-Abreu and Tudor \cite{AbreuTudor}. Later work by Unterberger extended this to the general $V,\beta$ case \cite{Unterberger}. This work done in the asymptotic setting rests heavily on convergence of the Stieltjes transform but gives little information on local interactions.

In the case of the Gaussian ensembles where $V(x)=x^2/2$ and $\beta=1, 2,$ or $4$ the eigenvalues of the matrices form a finite determinantal or Pfaffian point process. The Dyson flow for $\beta=2$ can also be described as a determinantal point process with the extended Hermite kernel $K(\overline x,\overline y)$, now with points in space-time. These descriptions give exact formulas for correlation functions and may be used to prove local limits. Forrester and Nagao give local point process limits at multiple times via determinantal methods for the $\beta=2$ case \cite{FN2}. Under the appropriate substitutions the extended Hermite kernel converges to the extended sine kernel. Similar results hold at the edge of the spectrum. 

Work in recent years has been focused on the more general Dyson model introduced in (\ref{eq:dlambda}), but much remains to be done. Landon, Sosoe, and Yau give universality results relating eigenvalue statistics for a class of potentials $V$ to the classical Dyson Brownian motion \cite{LSY}. Huang and Landon give results on rigidity and a mesoscopic CLT \cite{HuangLandon}. In particular the question of local limits for the general $\beta$ remain open even in the case of the quadratic potential. These questions are the motivation behind the current work. To see the connection we give a brief review of the work done for non-dynamic model.

In the case of the $\beta$-Hermite ensembles (not time-evolving) for general $\beta>0$ the proof of local limits rests on a tridiagonal matrix models introduced by Dumitriu and Edelman \cite{DE}. The tridiagonal (Jacobi) model for the case $V(x) = 2 x^2$ is given by 
\begin{equation}
\label{eq:tridiagonal}
A_{n} = \left[ \begin{array}{ccccc}
b_1 & a_1 & && \\
a_1 & b_2 & a_2 & &\\
& \ddots & \ddots & \ddots \\
& & a_{n-2} & b_{n-1} & a_{n-1}\\
&&& a_{n-1} & b_n
\end{array}\right]
\end{equation}
with $a_i \sim \chi_{\beta(n-k)/4}$ and $b_i \sim \mathcal{N}(0,2)$ all independent. Notice that the potential in this case is off by a factor of 4 from the potential mentioned previously. This will be a convenient choice later. Dumitriu and Edelman showed that the eigenvalues of $A_n$ have joint density given by (\ref{eq:betaensemble}) \cite{DE}. They in fact go further, using a bijection between the entries of a Jacobi matrix and the eigenvalues together with the square of their spectral weights. 
\begin{proposition}[Dumitriu-Edelman, \cite{DE}]
We take $q_i$ to be the first entry of the vector $v_i$ that satisfies $A_n v_i = \lambda_i v_i$ with $\|v_i \|= 1$, and $\lambda_1< \lambda_2<\cdots < \lambda_n$ to be the ordered eigenvalues of $A_n$. Then the map $(\overline a, \overline b) \leftrightarrow (\overline \lambda, \overline q^2)$ is a bijection, and moreover for $A_n$ defined as above we get 
\begin{equation}
\label{eq:dirichlet}
(q_1^2,q_2^2,...,q_n^2) \sim \operatorname{Dirichlet}(\tfrac{\beta}{2},...,\tfrac{\beta}{2}).
\end{equation}
\end{proposition}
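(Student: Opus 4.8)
The plan is to establish the bijection first, purely by linear algebra, and then to push the known law of the entries through it via a Jacobian computation. For the bijective part, I would argue that a tridiagonal matrix $A_n$ with strictly positive off‑diagonal entries $a_i>0$ has simple spectrum, and that the pair $(\underline\lambda,\underline q^2)$ determines $A_n$ uniquely. The key observation is that the vector $e_1$ is cyclic for $A_n$, so $A_n$ is unitarily equivalent to the operator of multiplication by $x$ on $L^2(\mu)$, where $\mu=\sum_i q_i^2\,\delta_{\lambda_i}$ is the spectral measure associated with $e_1$; indeed $q_i = \langle e_1,v_i\rangle$ so $q_i^2$ is exactly the spectral weight, and $\sum q_i^2=1$ since $\{v_i\}$ is an orthonormal basis. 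Applying the Gram–Schmidt process to $1,x,x^2,\dots,x^{n-1}$ in $L^2(\mu)$ produces the orthonormal polynomials $p_0,\dots,p_{n-1}$, and the three‑term recurrence they satisfy has coefficients that are precisely $(\underline a,\underline b)$; the sign convention $a_i>0$ (together with, say, positive leading coefficients) removes all ambiguity, so the map is injective, and it is surjective onto $\{(\underline\lambda,\underline q^2): \lambda_1<\dots<\lambda_n,\ q_i^2>0,\ \sum q_i^2=1\}$ by running the construction in reverse. This identifies the image as $(\mathbb R^n_{<})\times \Delta_{n-1}$, the product of the open ordered chamber with the open simplex.

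Next I would compute the law of $(\underline\lambda,\underline q^2)$ by transporting the density of $(\underline a,\underline b)$. The joint density of the entries factorizes: the $b_i$ contribute $\prod_i e^{-b_i^2/4}$ (up to constants), and $a_i\sim\chi_{\beta(n-i)/4}$ contributes $\prod_i a_i^{\beta(n-i)/4-1}e^{-a_i^2/4}$. I would write $\sum_i b_i^2 + 2\sum_i a_i^2 = \tr(A_n^2) = \sum_i \lambda_i^2$, which converts the Gaussian/chi exponential factors into $e^{-\frac14\sum_i\lambda_i^2}$ in the new coordinates — this is where the factor‑of‑$4$ normalization of the potential is doing its bookkeeping. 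What remains is to understand $\prod_i a_i^{\beta(n-i)/4-1}$ and the Jacobian $\bigl|\partial(\underline a,\underline b)/\partial(\underline\lambda,\underline q^2)\bigr|$. The standard route is the classical identity expressing $\prod_{i=1}^{n-1} a_i^{2(n-i)}$ in terms of the spectral data: one shows $\prod_{i} a_i^{2(n-i)} = \prod_{i<j}(\lambda_j-\lambda_i)^2 \cdot \prod_i q_i^2$, because the left side is the squared product of subdiagonal entries, which equals the discriminant‑type quantity built from the Vandermonde of the $\lambda_i$ weighted by the spectral masses (equivalently, it is the resultant appearing in the change of variables between moments and recurrence coefficients). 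Combined with the Jacobian of the map — which one computes to be $\prod_i q_i^{-1}$ up to a constant, or folds directly into the same identity — the powers of $a_i$ reorganize into $\prod_{i<j}|\lambda_i-\lambda_j|^{\beta/2}\cdot\prod_i q_i^{\beta/2-1}$.

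Putting the pieces together, the density of $(\underline\lambda,\underline q^2)$ on $(\mathbb R^n_{<})\times\Delta_{n-1}$ is proportional to
\[
  \prod_{i<j}|\lambda_i-\lambda_j|^{\beta/2}\; e^{-\frac14\sum_i\lambda_i^2}\;\cdot\;\prod_{i=1}^{n} (q_i^2)^{\beta/2-1}.
\]
This product structure immediately gives two conclusions: the $\underline\lambda$‑marginal is exactly the $\beta$‑ensemble density \eqref{eq:betaensemble} with $V(x)=x^2/2$ in these coordinates (matching $V(x)=2x^2$ after the earlier rescaling), and, crucially, the $\underline q^2$‑factor is independent of $\underline\lambda$ and is the density of $\operatorname{Dirichlet}(\tfrac\beta2,\dots,\tfrac\beta2)$ on the simplex, which is the claimed \eqref{eq:dirichlet}. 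The main obstacle is the middle step: proving the algebraic identity relating $\prod_i a_i$, the Vandermonde in $\underline\lambda$, and $\prod_i q_i$, and correctly tracking the Jacobian of the entries‑to‑spectral‑data map; once that identity is in hand, the exponential terms collapse cleanly via the trace and the rest is bookkeeping of constants. I would handle that identity either by induction on $n$ using the interlacing of eigenvalues of $A_n$ and its $(n-1)\times(n-1)$ principal minor (which relates $a_{n-1}^2$ to ratios of characteristic polynomials evaluated at eigenvalues, i.e. to the $q_i^2$), or by citing the orthogonal‑polynomial formula for the norming constants directly.
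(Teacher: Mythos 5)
The paper does not prove this proposition; it is quoted directly from Dumitriu--Edelman \cite{DE}, so there is no in-paper argument to compare against. Your outline is the original Dumitriu--Edelman strategy: establish the bijection through the spectral measure of $e_1$ and the three-term recurrence of the associated orthonormal polynomials, then push the explicit density of $(\overline a,\overline b)$ through the change of variables using the identity $\prod_{i}a_i^{2(n-i)}=\prod_{i<j}(\lambda_j-\lambda_i)^2\prod_i q_i^2$ together with the Jacobian of the entries-to-spectral-data map. The bijection half of your argument is complete and correct.

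The change-of-variables half has two concrete problems. First, the Jacobian is not ``$\prod_i q_i^{-1}$ up to a constant'': the correct statement is $da\,db=\bigl(\prod_{i}a_i/\prod_i q_i\bigr)\,d\lambda\,dq$, and the extra factor $\prod_i a_i$ is load-bearing, since it is exactly what converts $\prod_i a_i^{k_i-1}$ (from the $\chi_{k_i}$ densities) into $\prod_i a_i^{k_i}$ so that the product identity applies with integer multiples of $n-i$; your hedge that it ``folds into the same identity'' must be made precise, because omitting it changes the final Dirichlet parameter. Second, the exponent arithmetic as written does not close. If the degrees of freedom were literally $\beta(n-i)/4$, then $\prod_i a_i^{\beta(n-i)/4}=\Delta(\lambda)^{\beta/4}\prod_i q_i^{\beta/4}$, which after dividing by $\prod_i q_i$ and converting surface measure in $q$ to Lebesgue measure in $q^2$ yields $\prod_i(q_i^2)^{\beta/8-1}$, i.e.\ $\operatorname{Dirichlet}(\beta/8,\dots,\beta/8)$, and a Vandermonde power $\beta/4$ rather than $\beta$; your displayed final density (with $\Delta^{\beta/2}$ and $(q_i^2)^{\beta/2-1}$) is not what your intermediate steps produce. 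The resolution is that the subscript in the paper's $\chi_{\beta(n-k)/4}$ must be read as $\beta(n-k)$ degrees of freedom with a scale adjustment matching $V(x)=2x^2$, not as $\beta(n-k)/4$ degrees of freedom; with d.o.f.\ $\beta(n-i)$ the same computation gives $\Delta^{\beta}\prod_i(q_i^2)^{\beta/2-1}$ as required. Finally, the identity $\prod_i a_i^{n-i}=\Delta(\lambda)\prod_i q_i$, which you correctly flag as the crux, still needs an actual proof or a precise citation; the interlacing/induction route you sketch does work.
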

This tridiagonal model was used by Valk\'o and Vir\'ag to give a point process limit in the bulk \cite{BVBV}, and Ram\'irez, Rider and Vir\'ag to give a point process limit at the edge \cite{RRV}.

\subsection{Notation}

We deal with matrix-valued random processes.  The subscripts of such processes are used both for referring to matrix and vector entries, as well as a time parameter.  So, we adopt the following convention.  A process $\MP{X} = \left( \MP{X}{t}, t\geq 0\right)$ will be shown in bold whenever its time parameter is not shown.  Otherwise, it will be shown in normal weight font.  Matrix indexing subscripts will also appear in the subscript, but always first.  So for example, $\MP{X}{t}[i,j]$ refers to the $i,j$ entry of the matrix process $\MP{X}$ at time $t,$ while $\MP{X}[i,j]$ refers to the scalar-valued process of the $i,j$ entry.  

There are many instances in this paper where $[A,B]=AB-BA$ denotes the commutator. In order to avoid confusion all quadratic variation and covariation terms will be written in the form $dX_t dY_t$ or $(dX_t)^2$.

\subsection{Goals and Results}

The goal of this paper is to give a description of the tridiagonal matrix evolution associated to the $\beta$-Dyson Brownian motion flow. These models can in fact be completely characterized, but require a substantial amount of machinery to describe. In this paper we begin by showing the existence of such a tridiagonal flow and then developing the linear algebra tools needed to characterize the model. We then move on to characterizing the model and asymptotics for the case $V(x)= 2x^2$. 

Recall that the tridiagonal model is in bijection with the eigenvalues and the spectral weights. We have an explicit description of the eigenvalue flow, but we still have the freedom to choose the process by which the spectral weights evolve. The first partial description of the tridiagonal model is in Theorem \ref{thm:martingales} which gives a description of the martingale terms for the general model. We then specialize to the case where the spectral weights remain fixed. The model here is given in its entirety by Theorem \ref{thm:tridiagonalfrozen}. This model is particularly nice in the case where the eigenvalue flow is stationary. In this case we get that the tridiagonal model is also stationary. We describe two more cases, the first is the case where the spectral weights evolve according to some finite variation process. The description is given in Theorem \ref{thm:tridiagonalFV}. The second is a model where the martingale terms in the evolution vanish on the first half of the matrix.

In addition to explicit descriptions of the evolution of the finite matrices we get the following asymptotic result on the evolution of the entries of $\MP{A}$ as $n \to \infty$.  Define the operator $\mathcal{F}$ on Jacobi matrices by
  \[
    \mathcal{F}(A)_{k,\ell}
    =
    \begin{cases}
      -b_\ell(4\ell-2)+
       4\sum_{j=1}^{\ell-1} b_j & \text{ if } k = \ell , \\
      -2 (\ell+1)a_\ell- 2\ell a_{\ell-1} 
      + 4\sum_{j=1}^{\ell-2} a_j 
      & \text{ if } k = \ell + 1 ,\\
    0, & \text{ otherwise.}
    \end{cases}
  \]
  and $D$ be the infinite Jacobi matrix
\[
  D
  =
  \begin{bmatrix}
    0 & \frac{1}{2} &   &  &  & \\
    \frac{1}{2} & 0 & \frac{1}{2} &  &  & \\
      & \frac{1}{2} & 0 & \frac{1}{2}&  & \\
      &   & \frac{1}{2} &  &  &  & \\
      &   &   &  &  \ddots &  & \\
  \end{bmatrix}.
\]

Let $\mathfrak{s}$ denote the semicircle density on $[-1,1],$
\[
  \mathfrak{s}(x) = \frac{2}{\pi}\sqrt{1-x^2}\,\one[|x| \leq 1],
\]
and define for $z \in \C \setminus [-1,1]$
\[
  \SCS(z) =
  \int_{-1}^1 \frac{\mathfrak{s}(x)\,dx}{x-z}
  =
  2(-z+\sqrt{z^2-1})
  .
\]

 \begin{theorem}
 \label{thm:asymptotics}
 Let $\MP{A}$ satisfy the evolution for the case of frozen spectral weights in Theorem \ref{thm:tridiagonalfrozen}, with $V(x)=2x^2.$  Suppose that at time $0,$ the eigenvalue distribution of $\MP{A}{0}^{(n)}/\sqrt{n}$ satisfies
 \[
   n^{-1/2}\tr\left( z - \MP{A}{0}^{(n)}/\sqrt{n} \right)^{-1}
   - n^{1/2} \SCS(z)
   \Prto \int_{-1}^1 \frac{\nu(dx)}{x-z}
 \]
 for some signed measure $\nu$
 uniformly on compact sets of $\mathbb{C}\setminus [-1,1].$  Suppose that the spectral weights of $\MP{A}$ satisfy \eqref{eq:dirichlet} and are independent of the eigenvalues of $\MP{A}$.  If $\MP{A}{0}$ is the tridiagonal in (\ref{eq:tridiagonal}), then this is satisfied with $\nu = 0.$ 
 
Then any bounded order principal submatrix of $\MP{A}-D\sqrt{n}$ converges to the solution of
\[
dA_t = \mathcal{F}(A_t) - { \sqrt{\beta}} \mathcal{G}
\]
where $\mathcal{G}$ is a fixed (non-time dependent) Gaussian vector with an explicitly computable correlation structure. If we choose $\overline q$ to be flat (that is $q_i^2 = \frac{1}{n}$ for all $i$) then we get
\[
dA_t = \mathcal{F}(A_t).
\]
\end{theorem}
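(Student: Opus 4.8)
The plan is to begin from the exact finite-$n$ system of Theorem~\ref{thm:tridiagonalfrozen}, write $\MP{A}{t}^{(n)} = D\sqrt{n} + \MP{B}{t}^{(n)}$, and follow a fixed leading (top-left) $m\times m$ principal submatrix of $\MP{B}$ as $n\to\infty$. The organizing observation is that, for frozen weights, the entries $a_1,b_1,\dots,a_m,b_m$ of $\MP{A}{t}$ are the Jacobi parameters of the spectral measure $\mu_t^{(n)} = \sum_j q_j^2\,\delta_{\lambda_{j,t}}$, hence a fixed rational function — smooth on the (always satisfied here) positive-definiteness locus of the associated Hankel matrix — of the first $2m+1$ moments $m_\ell(t) = \sum_j q_j^2\lambda_{j,t}^\ell$. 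So it suffices to follow finitely many moments. It\^o's formula applied to $m_\ell$ via \eqref{eq:dlambda} with $V(x)=2x^2$ (so $V'(\lambda_j)/2 = 2\lambda_j$) gives
\[
  dm_\ell
  = \Bigl(-2\ell\, m_\ell
  + \ell\,\textstyle\sum_j q_j^2\lambda_j^{\ell-1}\sum_{k\neq j}\tfrac{1}{\lambda_j-\lambda_k}
  + \tfrac{\ell(\ell-1)}{\beta}\,m_{\ell-2}\Bigr)\,dt
  + \sqrt{\tfrac{2}{\beta}}\;\ell\,\textstyle\sum_j q_j^2\lambda_j^{\ell-1}\,dZ_{j,t},
\]
which is a repackaging of the entrywise drift and martingale furnished by Theorem~\ref{thm:tridiagonalfrozen}.

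The second step is to rescale. Write $x_{j,t} = \lambda_{j,t}/\sqrt{n}$; the semicircle law $\mathfrak{s}$ on $[-1,1]$ is exactly the equilibrium measure of the $\beta$-ensemble for $V(x)=2x^2$, and its Hilbert transform satisfies $\mathrm{p.v.}\!\int\!\frac{\mathfrak{s}(y)}{x-y}\,dy = 2x$ on $(-1,1)$ — equivalently, the semicircle moments $s_\ell=\int x^\ell\mathfrak{s}(x)\,dx$ obey the Catalan recursion $\sum_{p=0}^{\ell-2}s_p s_{\ell-2-p}=4s_\ell$. The hypothesis says the empirical measure of $x_{\cdot,0}$ matches $\mathfrak{s}$ up to an $O(n^{-1/2})$ correction encoded by $\nu$; I then need the Dyson flow to preserve this, i.e. $\mu^{(n)}_{\mathrm{emp},t} = \mathfrak{s} + n^{-1/2}\nu_t + o(n^{-1/2})$ together with rigidity $|x_{j,t}-\gamma_j| = O(n^{-1/2+\eps})$ about the classical locations $\gamma_j$ (the first correction does not blow up because $V$ is strictly convex near the equilibrium). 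Substituting $\lambda_{j,t}=\sqrt n\,x_{j,t}$ into the moment system, the two $O(n^{\ell/2})$ drift pieces — $-2\ell m_\ell$ and the leading part of the repulsion term — cancel \emph{exactly}, by the Catalan recursion above; the It\^o term $\tfrac{\ell(\ell-1)}{\beta}m_{\ell-2}$ is of smaller order $n^{(\ell-2)/2}$, so $\beta$ drops out of the surviving drift; and the martingale term has quadratic variation $\tfrac{2\ell^2}{\beta}\sum_j q_j^4\lambda_j^{2\ell-2}\,dt$, which after rescaling is $O(1/n)$ because $\sum_j q_j^4=O(1/n)$. This is why the limit carries no running noise.

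The surviving $O(1)$ fluctuation is carried by $\xi_{\ell,t} = \sqrt{n}\bigl(n^{-\ell/2}m_{\ell,t}-s_\ell\bigr)$: the fixed leading block of $\MP{B}{t}$ equals, to leading order, a fixed linear image $L\bigl(\xi_{\ell,t}\bigr)_{\ell\le 2m}$, where $L$ is the differential at the semicircle moments of the moments-to-Jacobi-parameters map — this is exactly (the transpose of) the derivative of the Lanczos/tridiagonalization algorithm established earlier in the paper. Multiplying the centred moment drift by $n^{(1-\ell)/2}$ and expanding the repulsion term to first order in $\mu^{(n)}_{\mathrm{emp},t}-\mathfrak{s}$ and in the frozen weight fluctuations $w_j:=q_j^2-\tfrac1n$, the residual drift of $\xi_{\ell,t}$ turns out to be a \emph{closed, affine-linear} expression: a linear combination of $\xi_{0,t},\dots,\xi_{\ell,t}$ — which, conjugated by $L$, is the operator $\mathcal{F}$, and it carries no $\beta$ — plus a \emph{constant} built from $w_\bullet$. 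Since the weights are frozen and the positions $x_{j,t}$ may be replaced by the time-independent $\gamma_j$ up to a vanishing error, this constant is asymptotically a time-independent vector, jointly Gaussian with $A_0=\lim L(\xi_{\ell,0})$, whose law is computed from the $\mathrm{Dirichlet}(\tfrac\beta2,\dots,\tfrac\beta2)$ distribution \eqref{eq:dirichlet} and is independent of the eigenvalue data. Passing to the limit — tightness of $(\xi_{\ell,t})$ from a Gr\"onwall/self-consistency estimate, and uniqueness for the resulting linear ODE with random constant forcing — yields convergence in distribution, as continuous processes on any finite time interval, to the solution of $dA_t = \mathcal{F}(A_t) - \sqrt\beta\,\mathcal{G}$, with $\mathcal{G}$ the image of that Gaussian vector; when $q_j^2\equiv 1/n$ all the $w_j$ vanish, killing the constant and leaving $dA_t=\mathcal{F}(A_t)$. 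For the special case $\MP{A}{0}$ equal to \eqref{eq:tridiagonal}, one checks $\nu=0$ from rigidity and the linear-statistics CLT for the $\beta$-Hermite ensemble: its empirical spectral measure agrees with $\mathfrak{s}$ to order $O(1/n)$, which is $o(n^{-1/2})$.

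I expect the main obstacle to be the coupling in the propagation step: one must simultaneously control the empirical spectral measure of the Dyson flow throughout the time interval at the $n^{-1/2}$ scale (rigidity, plus the linear transport of the first correction $\nu_t$) \emph{and} establish an a priori $O(1)$ bound on the fluctuation fields $\xi_{\ell,t}$, so that the quadratic-in-fluctuation remainders left over from the expansion of the repulsion term are genuinely negligible; these two estimates feed into one another and must be closed together by a bootstrap. The remaining ingredients — smoothness and invertibility of the moments-to-Jacobi-parameters map near $\mathfrak{s}$, the vanishing of the entry quadratic variations, and the joint central limit theorem for the frozen Dirichlet weights and the eigenvalue statistics — are comparatively routine.
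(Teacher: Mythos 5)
Your route is genuinely different from the paper's, and its skeleton is sound: you pass from the tridiagonal entries to the first $2m+1$ moments $m_\ell=\sum_j q_j^2\lambda_{j,t}^\ell$ of the weighted spectral measure (legitimate, since the frozen-weight model is by construction the Jacobi matrix of $\sum_j q_j^2\delta_{\lambda_{j,t}}$ and the top block is a smooth Hankel-determinant function of these moments), cancel the two $O(n^{\ell/2})$ drift terms via the equilibrium relation $\sum_p s_ps_{\ell-2-p}=4s_\ell$, observe that the It\^o term kills the $\beta$-dependence of the drift and that $\sum_j q_j^4=O(\log n/n)$ kills the running noise, and identify the forcing constant with the Dirichlet weight fluctuations $w_j=q_j^2-1/n$ transported to the classical locations. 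All of these structural points match what the paper's machinery ultimately produces ($\mathcal{F}$ from the linearization, the time-independent Gaussian from $\sqrt{n}(s^A_t-s_t)\to\SQ$, negligible martingales via $\max_iq_i^2\le C\log n/n$). The paper instead works entrywise with the matrices $G(\lambda_i)$, $G^{k,\ell}$, $B$ built from the orthogonal and minor polynomials, and extracts $\mathcal{F}$ by computing the sums $\sum_jq_j^2\,p_k^{(0)}(\lambda_j)p_{k\pm1}'(\lambda_j)$ \emph{exactly} via the three-term recurrence, orthogonality, and the perturbation formula \eqref{eq:MinorPerturb}, then passing to Chebyshev limits.

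The substantive issue is the step you defer to a ``bootstrap.'' In your formulation, extracting the linear-in-$\xi$ drift from the repulsion term $\ell\sum_jq_j^2\lambda_j^{\ell-1}\sum_{k\neq j}(\lambda_j-\lambda_k)^{-1}$ requires linearizing a bilinear functional of the \emph{weighted} and the \emph{unweighted} empirical measures around $\mathfrak{s}$, with a singular kernel $1/(x-y)$; for the drift of $\xi_\ell$ to be computed to $o(1)$ you need (i) control of $\mu_{\mathrm{emp},t}-\mathfrak{s}$ at the $n^{-1}$ (CLT) scale, or at least rigidity well beyond $n^{-1/2}$, uniformly along the flow, and (ii) an off-diagonal decoupling argument for the quadratic remainder against $1/(x-y)$. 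This is essentially the loop-equation/mesoscopic-CLT analysis for Dyson Brownian motion; it is known but is a genuine piece of work, not routine. The paper's argument is engineered precisely to avoid it: because the polynomials are orthogonal with respect to the \emph{weighted} measure, the leading cancellation (Lemma \ref{lem:mpeval}, Corollary \ref{cor:lead}) is an exact algebraic identity in the Jacobi coefficients, so only the law-of-large-numbers level for $s_t$ (Rogers--Shi) and the CLT for the weights (the KMT coupling in Proposition \ref{prop:stieltjes2}) are needed; the unweighted spectral fluctuations never have to be resolved at scale $n^{-1/2}$. So either supply the DBM rigidity/CLT input explicitly (e.g.\ via the Huang--Landon results cited in the introduction) and verify the remainder bound for the singular kernel, or reorganize the repulsion sum symmetrically in $(j,k)$ so that, as in the paper, the leading and subleading terms close on exact orthogonality relations of the $p_k$ and $p_k^{(j)}$ rather than on asymptotics of $\mu_{\mathrm{emp},t}$.
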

In the stationary case, or more generally when $\nu$ is $0,$ the limiting evolution of the tridiagonal matrix is identically $0.$

\subsubsection{Derivatives of the Lanczos algorithm}

Finding an explicit description for the tridiagonal evolution requires the us to solve for the tridiagonalization of a symmetric or Hermitian matrix after a small perturbation. This may be used to give an explicit derivative of the Lanczos algorithm. 

We will use $\mathscr{T}$ to denote the tridiagonalization operator on Hermitian matrices. Let $M$ be any $n \times n$ Hermitian matrix so that $\mathscr{T}(M)=A$ where the off diagonal entries are strictly positive. We will take $p_0(x),...,p_{k-1}(x)$ to the be orthogonal polynomials that are orthonormal with respect to the weights $\{q_i\}_{i=1}^n$. See Section \ref{sec:OPs} for further information about the construction. We further take $p_{k}^{(\ell)}(x)$ to denote the $\ell$-th minor polynomials. We use a notation that may not be consistent with other notations choices for the minor polynomials, see Section \ref{sec:minors} for details. 

We make one further definition. We use $D_{G, \mathscr{T}}$ to denote the derivative of the Lanczos algorithm in the direction of $G$ in the tridiagonal basis. In particular 
\[
D_{G,\mathscr{T}} \mathscr{T}(M) = \lim_{\eps \to 0} \frac{1}{\eps} \mathscr{T}(A+\eps G).
\]
Notice that on the right hand side we are taking the tridiagonalization of a perturbed tridiagonal matrix rather than in the original basis for $M$.

\begin{theorem}
\label{thm:lanczos}
Let $M$ be any $n \times n$ Hermitian matrix so that $\mathscr{T}(M)=A$ where the off diagonal entries are strictly positive, and let $W^{k,\ell}$ be the matrix that is 0 everywhere except $(k,\ell)$ and $(\ell,k)$ where it is 1. Let $G$ be a matrix and $D_{G,\mathscr{T}}$ denote the derivative in the direction of $G$ in the tridiagonal basis. Then 
\[
D_{W^{k,\ell},\mathscr{T}} \mathscr{T}(M) = G^{k,\ell}(A)
\]
where 
  \[
    G^{k,\ell}_{u,r}
    =
    \frac{1}{2}\sum_{i=1}^n
    q_i^2
    p_{k-1}(\lambda_i)
    \begin{cases}
    a_{r-1} p_{r-2}(\lambda_i)p_{r-1}^{(\ell-1)}(\lambda_i) - a_r p_{r-1}(\lambda_i) p_{r}^{(\ell-1)}(\lambda_i)&  \\
    +a_{r-1} p_{r-1}(\lambda_i)p_{r-2}^{(\ell-1)}(\lambda_i) - a_r p_{r}(\lambda_i) p_{r-1}^{(\ell-1)}(\lambda_i),  & \text{ if } u = r  < n, \\
    a_r (p_{r-1}(\lambda_i)p_{r-1}^{(\ell-1)}(\lambda_i) - p_r(\lambda_i)p_r^{(\ell-1)}(\lambda_i)), & \text{ if } u = r + 1 < n,\\
    0, & \text{ otherwise.}
  \end{cases}
\]
\end{theorem}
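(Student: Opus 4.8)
The strategy is to use that $\mathscr{T}$ is, up to inversion, the classical bijection between Jacobi matrices with strictly positive off-diagonal entries and probability measures with exactly $n$ atoms: if $\mu_M$ denotes the spectral measure of a Hermitian matrix $M$ at the first coordinate vector $e_1$ (so that $\int x^j\,d\mu_M = \langle e_1, M^j e_1\rangle$), then $\mathscr{T}(M)$ depends on $M$ only through $\mu_M$ — the Lanczos process from $e_1$ only sees those moments — and $\mu_{\mathscr{T}(M)} = \mu_M$. In our situation $\mu_A = \sum_{i=1}^n q_i^2\,\delta_{\lambda_i}$, and since simplicity of the spectrum, positivity of the $q_i^2$, and positivity of the off-diagonal entries are open conditions, $\mathscr{T}(A+\eps W^{k,\ell})$ is defined and depends smoothly on $\eps$ near $0$. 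Hence it suffices to compute the first-order variation $\dot\mu := \frac{d}{d\eps}\big|_{0}\,\mu_{A+\eps W^{k,\ell}}$ of the spectral measure and to push it through the inverse spectral map; the result is $G^{k,\ell}(A)$.

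Next I would differentiate the data attached to $\mu_\eps$. Let $p_m^{[\eps]}$ be the orthonormal polynomials of $\mu_\eps$ (positive leading coefficient), with three-term recurrence coefficients $(a_m^{[\eps]}, b_m^{[\eps]})$ — these are exactly the entries of $\mathscr{T}(A+\eps W^{k,\ell})$. Writing $\dot p_m = \sum_{t\le m}\beta_{m,t}\,p_t$ (the variation is lower triangular since $\deg p_m = m$), differentiating the orthonormality relations $\int p_m^{[\eps]}p_{m'}^{[\eps]}\,d\mu_\eps = \delta_{m,m'}$ gives $\beta_{m,t} = -\int p_m p_t\,d\dot\mu$ for $t<m$ and $\beta_{m,m} = -\tfrac12\int p_m^2\,d\dot\mu$, and substituting into the differentiated recurrence gives, after a short computation,
\[
  \dot b_{r} = -\,a_{r-1}\!\int\! p_{r-1}p_{r-2}\,d\dot\mu + a_{r}\!\int\! p_{r}p_{r-1}\,d\dot\mu,
  \qquad
  \dot a_{r} = \tfrac{a_{r}}{2}\Big(\!\int\! p_{r}^{2}\,d\dot\mu - \!\int\! p_{r-1}^{2}\,d\dot\mu\Big).
\]
This already produces the two nonzero cases $u=r$ and $u=r+1$ of $G^{k,\ell}$, and shows that no other entry of the tridiagonal matrix moves to first order. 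It remains to evaluate the functional $f\mapsto\int f\,d\dot\mu$ on the products $f = p_m p_{m'}$ appearing above.

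For that I would write $\int f\,d\mu_\eps = [\,f(A+\eps W^{k,\ell})\,]_{1,1}$, use the first-order (Duhamel) expansion of $f(A+\eps W^{k,\ell})$, and use the identity $[A^{j}]_{s,1} = \int x^{j}\,p_{s-1}\,d\mu_A$ (valid because $p_{s-1}(A)e_1 = e_s$). Since $W^{k,\ell} = e_k e_\ell^{\top} + e_\ell e_k^{\top}$ (for $k\neq\ell$), summing the resulting geometric-type double sum collapses this to
\[
  \int f\,d\dot\mu = \iint \frac{f(x)-f(y)}{x-y}\,\big(p_{k-1}(x)\,p_{\ell-1}(y) + p_{\ell-1}(x)\,p_{k-1}(y)\big)\,d\mu_A(x)\,d\mu_A(y).
\]
With $f = p_m p_{m'}$, the inner integral of $\tfrac{f(x)-f(y)}{x-y}$ against $p_{\ell-1}(y)\,d\mu_A(y)$ is where the minor polynomials enter: reading $\int h(y)\,p_{\ell-1}(y)\,d\mu_A(y) = [h(A)]_{\ell,1}$ and applying Cramer's rule to the resolvent of the tridiagonal matrix $A$ — whose relevant cofactors are characteristic polynomials of principal submatrices of $A$, i.e.\ the minor polynomials $p_\cdot^{(\ell-1)}$ up to the normalization of Section~\ref{sec:minors} — this $y$-integral becomes an explicit linear combination of $p_{m}^{(\ell-1)}(x),\,p_{m'}^{(\ell-1)}(x)$ and neighbors. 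The remaining $x$-integral against $p_{k-1}(x)\,d\mu_A(x)$ is then a sum over the atoms, $\sum_i q_i^2\,p_{k-1}(\lambda_i)(\cdots)$. Collecting terms — and using a Christoffel--Darboux-type rearrangement to recognize combinations such as $a_{r-1}p_{r-2}p_{r-1}^{(\ell-1)} - a_r p_{r-1}p_r^{(\ell-1)}$ — reproduces the four-term expression for $G^{k,\ell}_{r,r}$ when $r<n$ and the two-term expression for $G^{k,\ell}_{r+1,r}$ when $r<n$, all other entries being $0$; the restriction $r<n$ reflects that the relevant principal submatrix of $A$, hence the minor polynomial of the needed degree, degenerates at the bottom of the matrix.

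The main obstacle is this last step: reducing the double integral over pairs of eigenvalues to a single sum over eigenvalues expressed through the $p_\cdot^{(\ell-1)}$. This requires the precise cofactor/resolvent identities for Jacobi matrices from Section~\ref{sec:minors}, careful index bookkeeping, and attention to the boundary cases near row $n$; it also (implicitly) forces the identity that the answer, which is not manifestly symmetric in $k$ and $\ell$, equals the same expression with $k$ and $\ell$ swapped — a nontrivial relation between $\{p_\cdot, p_\cdot^{(k-1)}\}$ and $\{p_\cdot, p_\cdot^{(\ell-1)}\}$. By contrast, the differentiations in the first two steps and the smoothness of $\eps\mapsto\mathscr{T}(A+\eps W^{k,\ell})$ are routine.
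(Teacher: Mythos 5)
Your route is genuinely different from the paper's. The paper proves Theorem~\ref{thm:lanczos} by pure linear algebra: it solves $[A,M]-W^{k,\ell}\in\TS$ for an antisymmetric $M$ with vanishing first column (Theorem~\ref{thm:comm}, Corollary~\ref{cor:entry}) and reads off the tridiagonal defect as the derivative, the point being that conjugating $A+\eps W^{k,\ell}$ by $e^{\eps M}$ with $Me_1=0$ is exactly what re-runs Lanczos from $e_1$ to first order. You instead pass through the inverse spectral map and the classical perturbation theory of orthonormal polynomials in their measure. Your first three steps are correct: $\mathscr{T}$ does factor through the spectral measure at $e_1$; the variational formulas $\dot b_r=-a_{r-1}\int p_{r-1}p_{r-2}\,d\dot\mu+a_r\int p_rp_{r-1}\,d\dot\mu$ and $\dot a_r=\tfrac{a_r}{2}\bigl(\int p_r^2\,d\dot\mu-\int p_{r-1}^2\,d\dot\mu\bigr)$ check out; and the Duhamel computation of $\int f\,d\dot\mu$ as a double difference-quotient integral against $p_{k-1}(x)p_{\ell-1}(y)+p_{\ell-1}(x)p_{k-1}(y)$ is right. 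This buys a self-contained identification of the Lanczos derivative with no commutator machinery, which the paper itself leaves implicit.

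The genuine gap is your last step, and it is not bookkeeping. By symmetry of $\Delta^yf(x)$ and of $d\mu\otimes d\mu$ your formula collapses to $\int f\,d\dot\mu=2\sum_{i,j}q_i^2q_j^2\,\Delta^{\lambda_j}f(\lambda_i)\,p_{k-1}(\lambda_i)p_{\ell-1}(\lambda_j)$, and expanding $\Delta^y(p_mp_{m'})(x)=p_m(x)\Delta^yp_{m'}(x)+p_{m'}(y)\Delta^yp_m(x)$ and summing over $j$ (resp.\ $i$) produces \emph{two} families of terms: ones carrying $p^{(\ell-1)}$-minors weighted by $p_{k-1}$, and ones carrying $p^{(k-1)}$-minors weighted by $p_{\ell-1}$. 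The theorem's displayed formula is one-sided (only $p^{(\ell-1)}$ appears), and the two families are not equal term by term: already for $n=2$, $k=2$, $\ell=1$, $r=1$, one piece is $\sum_iq_i^2p_1^2p_1^{(0)}=1$ while the other is $\sum_iq_i^2p_1p_0p_1^{(1)}=0$. So the sentence ``collecting terms reproduces the four-term expression'' is precisely the claim that needs proof; you have flagged it as the obstacle but done nothing to surmount it. Worse, the same $2\times2$ example shows your correct intermediate formulas cannot terminate at the displayed constants: there $\mathscr{T}(A+\eps W^{2,1})=A+\eps W^{2,1}$, so the true derivative of $a_1$ is $1$, and indeed $\dot a_1=\tfrac12\int x^2\,d\dot\mu=1$ from your formulas, whereas the displayed $G^{2,1}_{2,1}$ evaluates to $-\tfrac12$ (or to $0$ if the restriction $u=r+1<n$ is enforced). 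Any completion of your argument must therefore confront a sign/normalization discrepancy with the statement as printed; until the reduction of $\int p_mp_{m'}\,d\dot\mu$ to minor polynomials is actually carried out, with the boundary cases at row $n$ and the $k\leftrightarrow\ell$ asymmetry resolved, the proposal does not constitute a proof.
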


\begin{remark}
The previous theorem is enough to give the derivative for an arbitrary $G$. We can also consider the derivative in the original basis for $M$, this will be equivalent to $D_{O^{*}GO}$ where $O$ is the change of basis matrix $O^*MO=A$.
\end{remark}

\subsection{Organization}

The paper is organized in the following way: section \ref{sec:iso} characterizes matrix diffusions that have the same spectral process, and in section \ref{sec:dysoniso} specializes these matrix diffusions to the case of the $\beta$-Dyson Brownian motion flow. These sections will essentially give existence of the tridiagonal model, but the description of the evolution itself will be implicit. Section \ref{sec:commutator} will develop the linear algebra and orthogonal polynomial tools that will be needed to write down an explicit representation of the tridiagonal matrix evolution. In this section we also prove Theorem \ref{thm:lanczos}. The proof is not explicitly identified, but this theorem will be a consequence of remark \ref{rem:Gkl}. 

Section \ref{sec:commutator} gives a brief introduction to discrete orthogonal polynomials (see section \ref{sec:OPs}) and later defines the corresponding minor polynomials (see section \ref{sec:minors}). The remainder of the sections is split into the derivation of several identities related to the commutator of $[A,M]$ for a tridiagonal $A$, and ends with a summary of the identities derived in the section. This final summary section \ref{sec:summary} gives a summary of the relations needed to describe the tridiagonal model explicitly and the necessary pieces for Theorem \ref{thm:lanczos}.

This explicit representation will be written down in several cases in section \ref{sec:models}. The organization of this sections is pretty self-explanatory from the section heading, but it is worth noting that section on the tridiagonal model with frozen weights \ref{sec:tridiagonalfrozen} will be necessary for reading the section on finite variation spectral weights \ref{sec:tridiagonalfv}. Additionally section \ref{sec:tridiagonalfrozen} contains a section where the finite variation terms are worked out and somewhat simplified. This explicit computation will be necessary for the asymptotic results in the next section, but is not essential otherwise. 

The final section gives the proof of Theorem \ref{thm:asymptotics}. A more detailed description about the organization may be found at the start of this section.

\section{Isospectral processes}
\label{sec:iso}

We begin by characterizing real matrix diffusions that carry the same eigenvalue distributions.  Suppose that $Z_t$ is a real symmetric semimartingale and $O_t$ is a real orthogonal semimartingale that are adapted to a common filtration.  Real orthogonal processes can be characterized by an associated process on the lie algebra associated to the orthogonal group, the real antisymmetric matrices.  Specifically, if $M_t$ is any real antisymmetric continuous semimartingale, the process
\[
  dO_t^t = O_t^t( dM_t + \frac{1}{2} (dM_t)^2)
\]
is a real symmetric process.  Moreover, any real orthogonal continuous semimartingale arises in this way (see the proof sketch in Theorem~\ref{thm:iso}).

Let $Y_t=O_t Z_t O_t^t.$  Applying the stochastic product rule to $Y_t,$ we get
\begin{align*}
  dY_t &= 
  dO_t Z_t O_t^t
  + O_t dZ_t O_t^t
  + O_t Z_t dO_t^t \\
  &+ dO_t dZ_t O_t^t
  + dO_t Z_t dO_t^t
  + O_t dZ_t dO_t^t.
\end{align*}
Let $W_t = \int_0^t O_s dZ_s O_s^t.$  Then, we can write
\begin{align*}
  dY_t &= 
  (dM_t + \frac{1}{2} (dM_t)^2)^t Y_t 
  + dW_t  + Y_t( dM_t + \frac{1}{2} (dM_t)^2) \\
  &+ dM_t^t dW_t
  +dM_t^t Y_tdM_t
  +dW_t dM_t,
\end{align*}
noting that higher order terms disappear.  This can be rewritten as
\[
  dY_t = dW_t + [Y_t, dM_t] + \frac{1}{2}[2dW_t + [Y_t,dM_t], dM_t].
\]
Alternatively, using that $[Y_t,dM_t] = dY_t - dW_t,$ up to finite variation terms,
\[
  dY_t = dW_t + [Y_t, dM_t] + \frac{1}{2}[dW_t + dY_t, dM_t].
\]
We have, in effect, proven the following.
\begin{theorem}
  Suppose $\MP{Z}$ is a real symmetric process adapted to a filtration $\filt$ and $\MP{Y}=\MP{O}\MP{Z}\MP{O}^t$ for some continuous orthogonal process $\MP{O}$ adapted to $\filt,$ then
  \begin{equation}
    \label{eq:prog}
    \begin{aligned}
    dY_t &= dW_t + [Y_t, dM_t] + \frac{1}{2}[dW_t + dY_t, dM_t] \\
    dO_t^t &= O_t^t( dM_t + \frac{1}{2} (dM_t)^2)
    \end{aligned}
  \end{equation}
  for some real anti-symmetric process $\MP{M}$ adapted to $\filt$ and where $\MP{W}$ solves
  $dW_t = O_t dZ_t O_t^t$ for all $t \geq 0.$
%  Moreover, for any real anti-symmetric process $\MP{M}$ and any real symmetric process $Z_t,$ the process $Y_t$ defined by \eqref{eq:prog} has the same spectral evolution as $Z_t.$
  \label{thm:iso}
\end{theorem}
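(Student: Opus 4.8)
The plan is to note that the substantive computation has already been carried out in the display preceding the statement: the stochastic product rule applied to $Y_t = O_t Z_t O_t^t$, after discarding the higher-order terms that vanish, produces precisely the first line of \eqref{eq:prog}, and the second form follows by substituting $[Y_t, dM_t] = dY_t - dW_t$, which is legitimate up to finite-variation terms since those do not enter the bracket $[\,\cdot\,, dM_t]$. So the proof reduces to two supporting facts: (i) every continuous orthogonal semimartingale $O$ adapted to $\filt$ can be written as $dO_t^t = O_t^t\bigl(dM_t + \tfrac12 (dM_t)^2\bigr)$ for some continuous antisymmetric $M$ adapted to $\filt$ --- this is the ``proof sketch'' promised above the statement --- and (ii) $W_t := \int_0^t O_s\, dZ_s\, O_s^t$ is a well-defined $\filt$-semimartingale, which is immediate because $Z$ is a semimartingale and $O$ is continuous and adapted.

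For (i) I would argue through Stratonovich calculus, where the ordinary Leibniz rule is available. Set $M_t := \int_0^t O_s \circ dO_s^t$, which is continuous and $\filt$-adapted. Applying the Stratonovich product rule to $O_t O_t^t \equiv I$ gives $(\circ dO_t)\,O_t^t + O_t\,(\circ dO_t^t) = 0$; transposing this relation shows $M_t^t = \int_0^t (\circ dO_s)\, O_s^t = -M_t$, so $M$ is antisymmetric. Since $O_t^t \circ dM_t = (O_t^t O_t)\circ dO_t^t = \circ dO_t^t$, integrating recovers $O_t^t = O_0^t + \int_0^t O_s^t \circ dM_s$. It then remains to convert this Stratonovich integral to It\^o: the correction term is $\tfrac12\, dO_t^t\, dM_t$, and because $dM_t$ and $O_t\, dO_t^t$ share the same martingale part, a short covariation computation gives $dO_t^t\, dM_t = O_t^t (dM_t)^2$, so that $dO_t^t = O_t^t\bigl(dM_t + \tfrac12 (dM_t)^2\bigr)$ as claimed. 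The initial value of $O$ is irrelevant here, since $M_t$ depends only on the increments of $O$.

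With (i) and (ii) in place I would finish by substituting $dO_t^t = O_t^t\bigl(dM_t + \tfrac12(dM_t)^2\bigr)$ and its transpose $dO_t = \bigl(dM_t^t + \tfrac12((dM_t)^2)^t\bigr)O_t$ into the six-term product-rule expansion of $dY_t$, deleting the supernumerary higher-order products exactly as in the displayed derivation, and collecting terms using $dM_t^t = -dM_t$ together with the symmetry of $(dM_t)^2$ for antisymmetric $M$. This recovers the first line of \eqref{eq:prog}, and the second line follows as above.

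The main obstacle I anticipate is the covariation bookkeeping in step (i): one must check $dO_t^t\, dM_t = O_t^t (dM_t)^2$ and, equivalently, that the finite-variation symmetric part of $O_t\, dO_t^t$ equals $-\tfrac12 (dM_t)^2$ --- this is precisely what forces the correction $\tfrac12 (dM_t)^2$ into the exponential-type formula rather than leaving a bare $dM_t$. Everything else is the routine matrix It\^o calculus already displayed in the text.
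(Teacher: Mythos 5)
Your proposal is correct, and the overall architecture matches the paper's: the displayed six-term product-rule expansion does all the work once one knows that every continuous orthogonal semimartingale $O$ admits an antisymmetric driver $M$ with $dO_t^t = O_t^t(dM_t + \tfrac12 (dM_t)^2)$. Where you differ is in how you produce $M$. The paper constructs it by taking matrix logarithms $\tilde M_{t_0,t} = \log(O_t^t O_{t_0})$ over a fine mesh of $[0,T]$, concatenating, and sending the mesh size to zero, leaving the convergence of the approximants as ``routine to check.'' You instead define $M$ in closed form as the Stratonovich integral $M_t = \int_0^t O_s \circ dO_s^t$, get antisymmetry for free from the Stratonovich product rule applied to $O_tO_t^t \equiv I$, and recover the It\^o form via the correction term; your covariation identity $dO_t^t\,dM_t = dO_t^t\,O_t\,dO_t^t = O_t^t(dM_t)^2$ is right, since the covariation only sees the martingale part of $dM_t$, which agrees with that of $O_t\,dO_t^t$. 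Your route buys an explicit formula for $M$ and avoids the unproven mesh-limit step, at the cost of invoking Stratonovich calculus; the paper's route is more hands-on but defers the analytic limit. One small caveat applies equally to both arguments: the theorem statement only calls $\MP{O}$ a continuous orthogonal \emph{process}, but both your construction and the paper's product-rule computation require it to be a continuous \emph{semimartingale}, as the surrounding discussion in the paper implicitly assumes.
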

\begin{proof}
  We need to demonstrate the existence of the process $\MP{M}$ solving
  \[
    dO_t^t = O_t^t( dM_t + \frac{1}{2} (dM_t)^2).
  \]
  On having done so, the product rule computations preceeding the theorem statement completes the proof.
  Let $T > 0$ be arbitrary.
  For any fixed time $t_0 \in [0,T],$ we have that 
  \(
    \lim_{t \to t_0} O_t^t O_{t_0} = \operatorname{Id}.
  \)
  Hence, we can take the matrix logarithm of $\tilde M_{t_0,t} = \log(O_t^t O_{t_{0}})$ for time $t$ sufficiently close to $t_0,$ which as an analytic function of $O_t$ is adapted to $(\filt_t, t \geq t_0).$  Further, we have that 
  \(
  O_t^t = O_{t_0}^t e^{\tilde M_{t_0,t}}
  \)
  for a sufficiently small window of time around $t_0.$  Hence, for a mesh $\left\{ t_i \right\}_0^d$ of $[0,T]$ of sufficiently fine spacing we may define for $t_i \leq t < t_{i+1}$
  \(
  M_t^d = \tilde M_{t_i,t} + \sum_{j=1}^i \tilde M_{t_{j-1},t_{j}}.
  \)
  On sending the mesh size to $0$ it is routine to check that $(M_t^{(d)}, 0 \leq t \leq T)$ converges to a solution of 
  \(
    dO_t^t = O_t^t( dM_t + \frac{1}{2} (dM_t)^2).
  \)

\end{proof}

\section{The time evolving tridiagonal model for general $\beta$}
\label{sec:dysoniso}

So, any time evolving tridiagonal model for which the eigenvector process is adapted will follow \eqref{eq:prog}.  In particular, any adapted conjugation of $\beta$-Dyson Brownian motion, which is a real diagonal semimartingale, has this form.  Since the eigenvalues and eigenvectors form locally differentiable processes in terms of the matrix entries, it follows that any tridiagonal model for $\beta$-Dyson Brownian motion satisfies \eqref{eq:prog}.
\begin{theorem}
\label{thm:dysoniso}
  If $\MP{A}$ is a tridiagonal model for $\beta$-Dyson Brownian motion with $\beta \geq 1$ whose starting configuration is almost surely simple, i.e.\;there is an orthogonal matrix process $\MP{O}$ so that $\MP{A} = \MP{O} \MP{\Lambda} \MP{O}^t$ for diagonal $\MP{\Lambda}$ satisfying for each $1 \leq i \leq n,$
  \[
    d\Lambda_{i,t} = \sqrt{\frac{2}{\beta}}dB_{i,t} - \biggl( \frac{V'(\lambda_i)}{2} - \sum_{j: j \neq i} \frac{1}{\lambda_i - \lambda_j} \biggr)\,dt
  \]
  then with respect to the filtration $(\filt_t, t\geq 0) = (\sigma(A_t),t\geq 0),$
  \[
    \begin{aligned}
    dA_t &= dW_t + [A_t, dM_t] + \frac{1}{2}[dW_t + dA_t, dM_t] \\
    dO_t^t &= O_t^t( dM_t + \frac{1}{2} (dM_t)^2)
  \end{aligned}
  \]
  for some real antisymmetric process $\MP{M}$ where $\MP{W}$ solves $dW_t = O_t d\Lambda_t O_t^t.$ 
  \label{thm:tri}
\end{theorem}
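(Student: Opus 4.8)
The plan is to reduce the statement to Theorem~\ref{thm:iso}. That theorem says that whenever a real symmetric process is written as $\MP{O}\MP{Z}\MP{O}^t$ for a continuous orthogonal process $\MP{O}$ adapted to the ambient filtration, the pair of evolution equations \eqref{eq:prog} holds. So it suffices to produce an eigenvector process $\MP{O}$ which is a continuous orthogonal semimartingale adapted to $\filt = (\sigma(A_t),\,t\ge 0)$ and which conjugates the diagonal eigenvalue process $\MP{\Lambda}$ to $\MP{A}$; then Theorem~\ref{thm:iso} with $\MP{Z} = \MP{\Lambda}$ (a real symmetric, indeed diagonal, process, adapted to $\filt$ because the ordered eigenvalues are continuous functions of $A_t$) delivers the two displayed equations, with $\MP{M}$ real antisymmetric and adapted to $\filt$ and $\MP{W}$ solving $dW_t = O_t\,d\Lambda_t\,O_t^t$. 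Thus the entire content lies in constructing $\MP{O}$, and the crux is that the spectral decomposition of $A_t$ is a smooth function of $A_t$ along the whole trajectory.

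The main step — and the only place the hypothesis $\beta \ge 1$ enters — is to rule out collisions of eigenvalues: if $\MP{\Lambda}$ solves the Dyson system from an almost surely simple initial configuration and $\beta \ge 1$, then almost surely $\lambda_{1,t} < \lambda_{2,t} < \cdots < \lambda_{n,t}$ for all $t \ge 0$. I would prove this by the standard comparison argument. Fixing $i$ and setting $\delta_t = \lambda_{i+1,t} - \lambda_{i,t}$, the interaction between $\lambda_i$ and $\lambda_{i+1}$ contributes $+\tfrac{2}{\delta_t}\,dt$ to $d\delta_t$, the remaining interaction terms stay bounded while $\lambda_i,\lambda_{i+1}$ are close to each other but away from the other eigenvalues, and the martingale part of $\delta_t$ has quadratic variation $\tfrac{4}{\beta}\,dt$; after the rescaling $\tilde\delta_t = \tfrac{\sqrt{\beta}}{2}\,\delta_t$ one finds that $\tilde\delta_t$ dominates, up to localization and up to the first collision time of any other pair of eigenvalues, a Bessel process of dimension $\beta + 1 \ge 2$, which never reaches $0$. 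A union bound over the $n-1$ gaps, plus a standard iteration to pass from local non-collision to non-collision for all time, gives global simplicity. (This is classical; non-collision for $\beta \ge 1$ can also be quoted from the literature.)

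Granted simplicity for all $t$, the ordered eigenprojections $P_{1,t},\dots,P_{n,t}$ of $A_t$ are real-analytic functions of $A_t$ on the open set of symmetric matrices with distinct eigenvalues. Since $\MP{A}$ is a Jacobi matrix with positive off-diagonal entries, the first coordinate of every eigenvector is nonzero — from the first row of $(A_t - \lambda)v = 0$, $v_1 = 0$ forces $v_2 = 0$ and then inductively $v = 0$ — so $\langle e_1, P_{i,t} e_1\rangle = \|P_{i,t} e_1\|^2 > 0$, and we may set $v_{i,t} = P_{i,t} e_1 / \|P_{i,t} e_1\|$ and let $O_t$ be the orthogonal matrix with these columns. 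Then $A_t = O_t \Lambda_t O_t^t$, and $O_t$ is a real-analytic function of $A_t$ on the simple-spectrum region; by It\^o's formula it is a continuous semimartingale, and it is $\filt$-adapted because it is a function of $A_t$ alone. This last point — adaptedness of the eigenvectors to $\sigma(A_t)$ — is automatic precisely because $\MP{A}$ is tridiagonal, and is exactly what would fail for a generic conjugation of Dyson Brownian motion.

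Finally, feeding this $\MP{O}$ together with $\MP{Z} = \MP{\Lambda}$ into Theorem~\ref{thm:iso} completes the proof; the stochastic product-rule computation is the one carried out immediately before the statement of Theorem~\ref{thm:iso}. I expect the non-collision step to be the only genuine obstacle: everything downstream of it — smoothness of the eigen-map, its adaptedness, and the It\^o bookkeeping — is routine.
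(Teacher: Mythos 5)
Your proposal is correct and follows essentially the same route as the paper: the paper's own proof likewise reduces to Theorem~\ref{thm:iso} by noting that non-collision of the eigenvalues (for $\beta \ge 1$, starting from a simple configuration) makes the eigenvector matrix a continuous, differentiable function of the entries of $A_t$, hence a continuous semimartingale adapted to $\sigma(A_t)$. You simply supply more detail than the paper's two-line argument, namely the Bessel-process comparison for non-collision and the sign normalization of the eigenvectors via the (nonvanishing) first coordinate.
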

\begin{proof}
  Since the eigenvalues of $\MP{A}$ are always distinct almost surely, the eigenvector matrix $\MP{O}$ is continuous and in fact differentiable as a function of the entries of $\MP{A}.$  Hence $\MP{O}$ is a continuous semimartingale adapted to $\filt$ and by Theorem~\ref{thm:iso} the result follows.  
 \end{proof}

The presence of the Dyson Brownian motion generator inside the theorem may at first sight appear disconcerting.  For example, we may wish to find a representation for the driving term $dW_t$ so that it is a local martingale.  Indeed this is possible:
\begin{theorem}
  Suppose that $\MP{W}$ is a symmetric matrix diffusion that solves
  \[
    dW_t = \sqrt{\tfrac{2}{\beta}} O_t dB_t O_t^t + O_t dZ_t O_t^t,
  \]
  where $\beta \geq 1,$ $\MP{B}$ is a diagonal matrix Brownian motion, $\MP{Z}$ is a symmetric matrix Brownian motion with $0$-diagonal and $\MP{O}$ is a continuous version of the eigenvector matrix of $\MP{W}.$  Then, $\MP{\Lambda} = \MP{O}^t \MP{W} \MP{O}$ has entries satisfying
  \begin{equation}
    \label{eq:dl1}
    d\Lambda_{i,t} = \sqrt{\frac{2}{\beta}}dB_{i,t} +\sum_{j: j \neq i} \frac{1}{\lambda_i - \lambda_j}\,dt,
  \end{equation}
  that is the eigenvalues of $\MP{W}$ follow $\beta$-Dyson Brownian motion with no potential.
  Hence if $\MP{A} = \MP{U}\MP{W}\MP{U}^t$ is any tridiagonal matrix diffusion satisfying
  \[
    \begin{aligned}
    dA_t &= U_tdW_tU_t^t + [A_t, dM_t] + \frac{1}{2}[dW_t + dA_t, dM_t] \\
    dU_t^t &= U_t^t( dM_t + \frac{1}{2} (dM_t)^2),
    \end{aligned}
  \]
  then the eigenvalues of $\MP{A}$ evolve according to \eqref{eq:dl1}.

%  There is a tridiagonal model for $\beta$-Dyson Brownian motion
%  If $\MP{A}$ is a tridiagonal model for $\beta$-Dyson Brownian motion for $\beta \geq 1$, i.e.\;there is an orthogonal matrix process $\MP{O}$ so that $\MP{A} = \MP{O} \MP{\Lambda}\MP{O}^t$ for diagonal $\MP{\Lambda}$ satisfying for each $1 \leq i \leq n,$
%  \[
%    d\Lambda_{i,t} = \sqrt{\frac{2}{\beta}}dB_{i,t} +\sum_{j: j \neq i} \frac{1}{\lambda_i - \lambda_j}\,dt
%  \]
%  then with respect to the filtration $(\filt_t,t\geq 0) = (\sigma(A_t),t\geq 0),$
%  \[
%    dA_t = dW_t + [A_t, dM_t] + \frac{1}{2}[dW_t + dA_t, dM_t]
%  \]
%  for some real antisymmetric process $\MP{M}$ and a symmetric process $\MP{W}$ where 
%  \[
%    dW_t = \sqrt{\tfrac{2}{\beta}} O_t dB_t O_t^t + O_t dZ_t O_t^t.
%  \]
%  Here $\MP{B}$ is a diagonal matrix of standard Brownian motions and $\MP{Z}$ is a real symmetric matrix with $0$ on the diagonal and standard Brownian motions above the diagonal.
  \label{thm:tri2}
\end{theorem}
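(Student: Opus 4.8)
The plan is to verify that $\MP{W}$ as described is an isospectral conjugation of $\beta$-Dyson Brownian motion (with no potential) and then simply invoke Theorem~\ref{thm:iso}, or rather its proof, to transfer the eigenvalue dynamics to $\MP{A}$. First I would compute the It\^o dynamics of $\MP{\Lambda} = \MP{O}^t \MP{W}\MP{O}$. The key input is that $\MP{O}$ is the eigenvector process of $\MP{W}$, so $\MP{\Lambda}$ is genuinely diagonal; writing $dO_t^t = O_t^t(dM_t + \tfrac12 (dM_t)^2)$ for the appropriate antisymmetric $\MP{M}$ (whose existence is exactly the content of the proof of Theorem~\ref{thm:iso}), the stochastic product rule gives
\[
  d\Lambda_t = O_t^t\, dW_t\, O_t + [dM_t, \Lambda_t] + \text{(finite variation correction terms)}.
\]
Substituting $dW_t = \sqrt{2/\beta}\, O_t dB_t O_t^t + O_t dZ_t O_t^t$, the martingale part becomes $\sqrt{2/\beta}\, dB_t + dZ_t$ together with the off-diagonal commutator term $[dM_t,\Lambda_t]$. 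Since the left side is diagonal, matching off-diagonal entries forces $dM_t$ to be determined (on the martingale level) by $\MP{Z}$: specifically $dM_{ij,t} = dZ_{ij,t}/(\lambda_i - \lambda_j)$ for $i \neq j$, which uses $\beta \geq 1$ to guarantee eigenvalue simplicity so the denominators never vanish.

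Next I would extract the drift on the diagonal. The diagonal of $[dM_t,\Lambda_t]$ vanishes, so the only surviving drift comes from the second-order correction terms $\tfrac12[2\,dW_t + [\Lambda_t, dM_t], dM_t]$ (and the $\tfrac12(dM_t)^2$ pieces), evaluated on the diagonal. This is the standard Dyson computation: the $(i,i)$ entry of the quadratic covariation $[dW_t, dM_t]$ and of $[[{\Lambda_t,dM_t}],dM_t]$ produces $\sum_{j \neq i} \frac{dt}{\lambda_i - \lambda_j}$ after plugging in $dM_{ij,t} = dZ_{ij,t}/(\lambda_i-\lambda_j)$ and using $(dZ_{ij,t})^2 = dt$. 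I would organize this as: (i) show the off-diagonal equations for $\MP{\Lambda}$ force the stated form of $dM_t$; (ii) compute $(d\Lambda_{i,t})$'s finite-variation part term by term using that form; (iii) observe the potential-free Dyson drift $\sum_{j\neq i}(\lambda_i-\lambda_j)^{-1}\,dt$ emerges, giving \eqref{eq:dl1}.

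For the final sentence of the theorem, once $\MP{W}$'s eigenvalues follow \eqref{eq:dl1}, the matrix $\MP{A} = \MP{U}\MP{W}\MP{U}^t$ is an orthogonal conjugation of $\MP{W}$, hence has the \emph{same} eigenvalues as $\MP{W}$ for every $t$ (conjugation preserves spectrum pathwise); therefore the eigenvalues of $\MP{A}$ trivially satisfy \eqref{eq:dl1}. The displayed system for $dA_t$ is then just \eqref{eq:prog} from Theorem~\ref{thm:iso} applied with base process $\MP{W}$ and conjugator $\MP{U}$, with $U_t dW_t U_t^t$ playing the role of $dW_t$ there; I would note this is a formal consequence and requires no further computation. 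I expect the main obstacle to be bookkeeping in step (ii): carefully tracking all second-order It\^o terms — the $\tfrac12(dM_t)^2$ in $dO_t^t$, the $dM_t^t\, dW_t$, $dM_t^t \Lambda_t dM_t$, and $dW_t\, dM_t$ contributions — and confirming that after restricting to the diagonal they collapse exactly to the Dyson drift without leftover terms and without a potential contribution (the absence of a potential being the reason the sign in \eqref{eq:dl1} is $+$ rather than the $-V'(\lambda_i)/2$ seen in \eqref{eq:dlambda}). A secondary subtlety is justifying that one is free to choose $\MP{Z}$ with zero diagonal without loss of generality, i.e.\ that the diagonal martingale part of $dW_t$ is entirely captured by $\sqrt{2/\beta}\,O_t dB_t O_t^t$; this follows because in the eigenbasis the diagonal entries of $O_t dZ_t O_t^t$ would only rescale the $dB_{i,t}$, and absorbing them is a harmless reparametrization.
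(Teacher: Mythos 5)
Your proposal is correct and follows essentially the same route as the paper: the paper likewise passes to the eigenbasis, chooses the antisymmetric rotation $dN_{i,j,t}=dZ_{i,j,t}/(\lambda_i-\lambda_j)$ to cancel the off-diagonal martingale part (your step (i)), extracts the Dyson drift from the It\^o correction $\tfrac{1}{2}[dZ_t,dN_t]_{i,i}$ using independence of $\MP{B}$ and $\MP{Z}$ (your steps (ii)--(iii)), and then dispatches the tridiagonal statement by invoking the isospectral theorem exactly as you do.
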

We require some linear algebra before proving this theorem, which we give in Section~\ref{sec:commutator}
However, we note that in the case $\beta=1,$ this driving noise simplifies:
\begin{corollary}
  In the case $\beta=1,$ the driving noise $\MP{W} \lawequals \frac{\MP{G} + \MP{G}^t}{\sqrt{2}}$ where $\MP{G}$ is a matrix of i.i.d.\;standard Brownian motions.
\end{corollary}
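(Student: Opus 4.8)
The plan is to start from the formula for $\MP{W}$ in Theorem~\ref{thm:tri2}, specialized to $\beta=1$, and show that the resulting matrix diffusion has the same law as $(\MP{G}+\MP{G}^t)/\sqrt{2}$. When $\beta=1$ the driving noise is
\[
  dW_t = \sqrt{2}\, O_t dB_t O_t^t + O_t dZ_t O_t^t,
\]
where $\MP{B}$ is a diagonal matrix Brownian motion and $\MP{Z}$ is a symmetric matrix Brownian motion with vanishing diagonal, with $\MP{B}$, $\MP{Z}$ independent and $\MP{O}$ the (continuous) eigenvector process of $\MP{W}$. The key observation is that $(\MP{G}+\MP{G}^t)/\sqrt{2}$ is, entrywise, a symmetric matrix whose diagonal entries are independent Brownian motions of variance $2$ and whose strict upper-triangular entries are independent standard Brownian motions — i.e.\ its law is the $\beta=1$ Gaussian (GOE) matrix Brownian motion, which is invariant in law under conjugation by any \emph{fixed} orthogonal matrix. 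The task is to upgrade this to invariance under conjugation by the adapted, $\MP{W}$-measurable orthogonal process $\MP{O}$.

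The natural route is a L\'evy-type characterization: I would compute the quadratic covariation structure of the entries of $\MP{W}$ and check it matches that of the GOE matrix Brownian motion, then invoke that $\MP{W}$ is a continuous local martingale (being a sum of stochastic integrals against Brownian motions with bounded, in fact orthogonal, integrands) with the correct deterministic bracket, hence a Gaussian process with the GOE covariance. Concretely, write $dW_t[i,j] = \sqrt{2}\sum_k O_t[i,k]O_t[j,k]\,dB_{k,t} + \sum_{k<\ell}\big(O_t[i,k]O_t[j,\ell]+O_t[i,\ell]O_t[j,k]\big)\,dZ_{k\ell,t}$ and compute $dW_t[i,j]\,dW_t[i',j']$. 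Using orthogonality of $\MP{O}$ (so $\sum_k O_t[i,k]O_t[i',k]=\delta_{ii'}$ and the analogous contractions), the cross terms telescope and one should land exactly on $(\delta_{ii'}\delta_{jj'}+\delta_{ij'}\delta_{ji'})\,dt$, which is the GOE bracket. Since $dW_t$ is driven only by $dB$ and $dZ$ with $\MP{O}$-valued (hence bounded) integrands, $\MP{W}$ is a genuine continuous local martingale with deterministic quadratic variation, so by L\'evy's characterization it is a Gaussian martingale with that covariance, i.e.\ $\MP{W}\lawequals(\MP{G}+\MP{G}^t)/\sqrt{2}$.

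The step I expect to be the main obstacle is the bracket computation itself — not because it is deep, but because one must be careful that the $2$ from the diagonal part and the cross structure of the off-diagonal part ($O[i,k]O[j,\ell]+O[i,\ell]O[j,k]$) combine correctly, including the cases $i=j$, and that the identity $\sum_{k}\sum_{\ell}\big(\text{products of }O\text{'s}\big) = \delta_{ii'}\delta_{jj'}+\delta_{ij'}\delta_{ji'}$ really does hold with the stated normalization $\sqrt{2/\beta}=\sqrt2$ and with $\MP{Z}$ having $0$-diagonal (so the $k=\ell$ terms are supplied entirely by $\MP{B}$). A secondary point to address cleanly is that one needs $\MP{O}$ to be a semimartingale adapted to the filtration of $\MP{W}$ — which is already granted here since in the application $\MP{W}$ has simple spectrum and $\MP{O}$ is a differentiable function of its entries — so that the stochastic integrals defining $dW_t$ are well posed and the L\'evy characterization applies. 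Once the bracket is verified, the identification of laws is immediate.
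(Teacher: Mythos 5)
Your argument is correct, and it is the standard one: the paper states this corollary without proof, and the intended justification is exactly the invariance in law of the $\beta=1$ (GOE) matrix Brownian motion under conjugation by an adapted orthogonal process, verified via the bracket computation and L\'evy's characterization. Your bracket calculation checks out — the $k=\ell$ terms missing from the $\MP{Z}$ contribution are supplied precisely by the $\sqrt{2}\,O_t\,dB_t\,O_t^t$ term, yielding $(\delta_{ii'}\delta_{jj'}+\delta_{ij'}\delta_{ji'})\,dt$ — so nothing further is needed.
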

\noindent The previous Theorem and Corollary are related to work of Allez and Guionnet see \cite{AllezGuionnet}.

\begin{remark}
  Similar simplifications occur for $\beta=2$ and $\beta=4$ if we replace the process $\MP{O}$ on the orthogonal group by appropriate unitary or symplectic processes, respectively. 
\end{remark}

%%%%%%%%%%%%%%%%%%%%%%%%%%%%%%%%%%%%%%%%%%%%%%%%%%%%%%%%%%%%%%%%%%%%%%%%%%%%%%%%%%%%%%%%%%%%%%%%%%%%%%%%%%%%%%%%%%%%%%%%%%%%%%%%

\section{Solving the commutator equation}
\label{sec:commutator}

Introduce the space $\TS$ of symmetric tridiagonal matrices.  
In this section, we consider the linear algebra problem of solving for antisymmetric $M$ so that 
\begin{equation}
  [A,M] + W \in \TS.
  \label{eq:comm}
\end{equation}
By dimension counting, we can see that this problem does not have unique solutions.  Indeed there is possibly (and generically there is) an $(n-1)$-dimensional space of $M$ that produce the desired answer (see Theorem \ref{thm:span}).
This is locally the problem that must be solved to describe the rotation differentials $dM_t$ that tridiagonalize $A_t$ when perturbed by $dW_t$ in \eqref{eq:prog}.  Further, we will need to understand the resulting space of tridiagonal matrices that solve this equation for a given $W.$
Understanding this algebra leads us to the theory of discrete orthogonal polynomials.

\subsection{Discrete orthogonal polynomials}
\label{sec:OPs}

Let
\begin{equation}
A = \left[ \begin{array}{ccccc}
b_1 & a_1 & && \\
a_1 & b_2 & a_2 & &\\
& \ddots & \ddots & \ddots \\
& & a_{n-2} & b_{n-1} & a_{n-1}\\
&&& a_{n-1} & b_n
\end{array}\right]
\end{equation}
be a Jacobi matrix. We can define a sequence of polynomials $p_0(x), p_1(x),...,p_n(x)$ to be solutions to the three term recurrence generated by the equation $A\Bp(x)=x\Bp(x)$ with $\Bp^T(x)=[p_0(x),p_1(x),...,p_{n-1}(x)]$. Taking $p_0(x)=1$ we get 
\begin{align*}
b_1 p_0(x)+ a_1 p_1(x) & =x p_0(x)\\
a_{k-1}p_{k-2}(x) + b_k p_{k-1}(x) + a_k p_k(x) & =x p_{k-1}(x)\\
a_{n-1} p_{n-2}(x) + (b_n-x)p_{n-1}(x) & = - p_n(x).
\end{align*}
For convenience we have completed the recurrence by taking $a_{n} =1$. This makes $p_n(x)$ a multiple of $\det(x-A).$  If $\lambda$ is an eigenvalue of $A$ the recurrence closes exactly and so $p_n(\lambda) = 0$. In other words the zeros $\lambda_1,...,\lambda_n$ of $p_n(x)$ are the eigenvalues of $A$, and moreover they have associated eigenvectors $\Bp(\lambda_k)$. Because $A$ is self-adjoint it follows that the eigenvectors are orthogonal. Take $q_k^{-2} = \| \Bp(\lambda_k)\|^2$, then the matrix 
\begin{equation}
  \label{eq:COB}
O^t= \left[ \begin{array}{cccc}
q_1 & q_1 p_1(\lambda_1) & \cdots & q_1 p_{n-1}(\lambda_1)\\
q_2 & q_2 p_1(\lambda_2) & \cdots & q_2 p_{n-1}(\lambda_2)\\
\vdots & & & \vdots \\
q_n & q_n p_1(\lambda_n) & \cdots & q_n p_{n-1}(\lambda_n)
\end{array}
\right]
\end{equation}
is the orthogonal change of basis matrix that diagonalizes $A$ with $O^t A O = \Lambda$. Moreover because $O$ is orthogonal we have the standard orthonormality conditions on both rows and columns. Therefore we get the following relationships:
\begin{align}
\sum_{i=1}^n q_i^2 p_{k}(\lambda_i) p_{\ell}(\lambda_i) = \delta_{k,\ell},\\
\sum_{i=1}^n q_k q_\ell p_{i-1}(\lambda_k)p_{i-1}(\lambda_\ell) = \delta_{k,\ell}. 
\end{align}

\subsection{Christoffel-Darboux Formula}

The Christoffel-Darboux formula is an automatic consequence of the three-term recurrence for $\left\{ p_k \right\}_{k=0}^n.$  As it is in symmetric form, this formula is
\begin{equation}
  \sum_{k=0}^m p_k(x)p_k(y) = a_{m+1}\frac{p_{m+1}(x)p_m(y) - p_m(x)p_{m+1}(y)}{x-y}.
  \label{eq:CD}
\end{equation}
Multiplying the left hand side through by $(x-y),$ and applying the recurrence, all but the terms on the right survive.  We can also take the limit in this formula as $y \to x$ to get the \emph{confluent} form of this formula:
\begin{equation}
  \sum_{k=0}^m p_k(x)^2 = a_{m+1}(p_{m+1}'(x)p_m(x) - p_m'(x)p_{m+1}(x) ).
  \label{eq:cCD}
\end{equation}
This allows us to represent the spectral weights $q_i^2$ by the following special case
\begin{equation}
  \frac{1}{q_i^2} = \sum_{k=0}^{n-1} p_k(\lambda_i)^2 = p_{n}'(\lambda_i)p_{n-1}(\lambda_i).
  \label{eq:qipn}
\end{equation}
We will also use the summation
\begin{align}
  \sum_{k=0}^m p_k(x)p'_k(y) 
  = &a_{m+1}\partial_y\biggl(\frac{p_{m+1}(x)p_m(y) - p_m(x)p_{m+1}(y)}{x-y}\biggr) \nonumber\\
  = 
  &a_{m+1}\frac{p_{m+1}(x)p'_m(y) - p_m(x)p'_{m+1}(y)}{x-y} \nonumber \\
  + &a_{m+1}\frac{p_{m+1}(x)p_m(y) - p_m(x)p_{m+1}(y)}{(x-y)^2}. 
  \label{eq:dCD}
\end{align}
Sending $x$ to $y$ (or more directly, differentiating \eqref{eq:cCD}), we also get
\begin{equation}
  \sum_{k=0}^m p_k(x)p_k'(x) = a_{m+1}(p_{m+1}''(x)p_m(x) - p_m''(x)p_{m+1}(x) ).
  \label{eq:dcCD}
\end{equation}

%%%%%%%%%%%%%%%%%%%%%%%%%%%%%%%%%%%%%%%%%%%%%%%%%%%%%%%%%%%%%%%%%%%%%%%%%%%%%%%%%%%%%%%%%%%%%%%%%%%%%%%%%%%%%%%%%%%%%%%%%%%%%%%%%%%%%%%%%%%%%%%%%%%%%%%%%%%%%%%%%%%

\subsection{Fundamental identity}

The commutator at entry $k,\ell$ is given by 
\begin{equation}
[A,M]_{k,\ell} = a_{k-1} m_{k-1,\ell} + b_k m_{k,\ell} +a_k m_{k+1,\ell} - (a_{\ell-1} m_{k,\ell-1} + b_\ell m_{k,\ell}+ a_\ell m_{k,\ell+1}).
\label{eq:AM}
\end{equation}
Terms for which $k$ or $\ell$ exceed the allowable indices are taken to be $0.$
This makes a natural choice for $M$ to be built out of the orthogonal polynomials associated to $A$, $p_{k-1}(x)p_{\ell-1}(y).$
To this end, define the matrix
\[
  E(x,y)_{k,\ell} = 
  p_{k-1}(x) p_{\ell-1}(y) \one[ k > \ell]
  +\frac{1}{2}p_{k-1}(x) p_{k-1}(y) \one[ k = \ell].
\]
In terms of $E,$ let $E^+$ be the symmetric analogue and let $E^{-}$ be the antisymmetric analogue 
\begin{equation}
\label{eq:E}
E^{+}(x,y) = E(x,y) + E(x,y)^t \qquad \text{ and } \qquad
E^{-}(x,y) = E(x,y) - E(x,y)^t.
\end{equation}

Some of these matrices, when evaluated at the eigenvalues of $A,$ have a nice expression when conjugated by the eigenvector change of basis matrix $O.$ These changes of basis are simple to check, and we do not verify them.  In what follows, we let $\delta_{i,j}$ be the matrix which zero everywhere except in the $(i,j)$ position where it is $1.$
\begin{align}
  &E^{+}(\lambda_i,\lambda_j) 
  + 
  E^{+}(\lambda_j,\lambda_i) 
  =
  O^t \biggl( \frac{\delta_{i,j} + \delta_{j,i}}{q_iq_j} \biggr) O,
  \label{eq:spec1} \\
  &E^{+}(\lambda_i,\lambda_i) 
  =
  O^t\biggl(
  \frac{\delta_{i,i}}{q_i^2}
  \biggr) O,
  \label{eq:spec2} \\
  &E^{-}(\lambda_i,\lambda_j) 
  -
  E^{-}(\lambda_j,\lambda_i) 
  =
  O^t \biggl( \frac{\delta_{i,j} - \delta_{j,i}}{q_iq_j} \biggr) O.
  \label{eq:spec3}
\end{align}
We note that \eqref{eq:spec2} is really just a special case of \eqref{eq:spec1}, but we include it for emphasis.  Note that, despite the importance of $E^{-}(\lambda_i,\lambda_i),$ which will become clear, it does not have any such simple representation.

Commutation of $A$ with $E^{-}$ nearly gives a multiple of $E^+.$  In effect, save for boundary terms that result from the problem having finite size and the boundary terms that result from the necessary antisymmetry conditions, $[A,E^{-}(x,y)] = (x-y)E^{+}:$
\begin{proposition}
  Define the matrix $\tilde B(x,y) = [A,E^{-}(x,y)] - (x-y)E^{+}(x,y).$  Then, for any $k \geq \ell,$
  \[
    \tilde B(x,y)_{k,\ell}
    =\begin{cases}
      -a_{\ell-1} p_{\ell-2}(x)p_{\ell-1}(y) + a_\ell p_{\ell-1}(x) p_{\ell}(y)&  \\
      -a_{\ell-1} p_{\ell-1}(x)p_{\ell-2}(y) + a_\ell p_{\ell}(x) p_{\ell-1}(y),  & \text{ if } k = \ell  < n, \\
      -a_\ell (p_{\ell-1}(x)p_{\ell-1}(y) - p_\ell(x)p_\ell(y)), & \text{ if } k = \ell + 1 < n,\\
      -p_n(x)p_{\ell-1}(y), & \text{ if } k = n, \ell < n-1, \\
      -p_n(x)p_{\ell-1}(y) 
      -a_\ell(p_{\ell-1}(x)p_{\ell-1}(y) - p_\ell(x)p_\ell(y))
      , & \text{ if } k = n, {\ell = n-1},\\
      -a_{\ell-1} p_{\ell-2}(x)p_{\ell-1}(y) + a_\ell p_{\ell-1}(x) p_{\ell}(y)&  \\
      -a_{\ell-1} p_{\ell-1}(x)p_{\ell-2}(y) - a_\ell p_{\ell}(x) p_{\ell-1}(y),
      & \text{ if } k = n, {\ell = n}, \text{ or } \\
      0, & \text{ otherwise.}
    \end{cases}
  \]
  Observe that the matrix $[A,E^{-}(x,y)] - (x-y)E^{+}(x,y)$ is symmetric, which determines the $k > \ell$ terms. 
  \label{prop:commutator}
\end{proposition}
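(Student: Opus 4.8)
The plan is to prove the identity by directly expanding $[A,E^{-}(x,y)]_{k,\ell}$ with the commutator formula \eqref{eq:AM} and then collapsing the result using the three‑term recurrence $A\Bp(x)=x\Bp(x)$ (with the convention $a_n=1$). The organizing heuristic is that multiplying $E^{-}(x,y)$ by $A$ on the left acts on the row index and, through the recurrence, should turn $p_{k-1}(x)$ into $xp_{k-1}(x)$, while multiplying by $A$ on the right acts on the column index and should turn $p_{\ell-1}(y)$ into $yp_{\ell-1}(y)$; so one expects $[A,E^{-}]$ to equal $(x-y)E^{+}$, and $\tilde B$ is exactly the error term. The three mechanisms producing that error are: the recurrence absorbs the term $a_np_n(x)$ only when the row index is $<n$, so a residue $-p_n(x)p_{\ell-1}(y)$ survives once $k=n$; the diagonal entry $E^{-}(x,y)_{\ell,\ell}$ equals $0$ rather than $\tfrac12 p_{\ell-1}(x)p_{\ell-1}(y)$; and the entries of $E^{-}$ just above the diagonal carry the opposite sign and the arguments $x,y$ interchanged relative to the corresponding entries of $E^{+}$.

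First I would record, from \eqref{eq:AM}, that
\[
  [A,E^{-}(x,y)]_{k,\ell}
  = a_{k-1}E^{-}_{k-1,\ell} + b_k E^{-}_{k,\ell} + a_k E^{-}_{k+1,\ell}
  - a_{\ell-1}E^{-}_{k,\ell-1} - b_\ell E^{-}_{k,\ell} - a_\ell E^{-}_{k,\ell+1},
\]
with out‑of‑range indices read as $0$, where $E^{-}_{i,j}=p_{i-1}(x)p_{j-1}(y)$ for $i>j$, $=-p_{j-1}(x)p_{i-1}(y)$ for $i<j$, and $=0$ for $i=j$. Then I would run the cases $k\ge\ell$. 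In the interior case $\ell+2\le k\le n-1$ every entry appearing is strictly below the diagonal, hence equals a product of the form $p_{i-1}(x)p_{j-1}(y)$; the six terms regroup into one batch with common factor $p_{\ell-1}(y)$ which by the recurrence in $x$ equals $x\,p_{k-1}(x)p_{\ell-1}(y)$, and one batch with common factor $p_{k-1}(x)$ which by the recurrence in $y$ equals $-y\,p_{k-1}(x)p_{\ell-1}(y)$, so the sum is $(x-y)E^{+}_{k,\ell}$ and $\tilde B_{k,\ell}=0$. When $k=n$ and $\ell\le n-2$ this repeats verbatim except that the $x$‑recurrence now only gives $a_{n-1}p_{n-2}(x)+b_np_{n-1}(x)=xp_{n-1}(x)-p_n(x)$, leaving the residue $-p_n(x)p_{\ell-1}(y)$.

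The diagonal case $k=\ell$ is where the indicator in $E^{-}$ bites: substituting $E^{-}_{\ell,\ell}=0$, $E^{-}_{\ell-1,\ell}=-p_{\ell-1}(x)p_{\ell-2}(y)$, $E^{-}_{\ell,\ell+1}=-p_\ell(x)p_{\ell-1}(y)$, $E^{-}_{\ell+1,\ell}=p_\ell(x)p_{\ell-1}(y)$ and $E^{-}_{\ell,\ell-1}=p_{\ell-1}(x)p_{\ell-2}(y)$ into the display, subtracting $(x-y)p_{\ell-1}(x)p_{\ell-1}(y)$, and using the recurrence once in $x$ and once in $y$ to rewrite $a_\ell p_\ell(x)$ and $a_\ell p_\ell(y)$ produces the stated two‑line expression. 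The sub‑diagonal case $k=\ell+1$ is the same kind of computation and collapses to $-a_\ell(p_{\ell-1}(x)p_{\ell-1}(y)-p_\ell(x)p_\ell(y))$. When $\ell=n$ in the first, or $\ell+1=n$ in the second, one of the terms drops out because its index leaves the matrix, and the missing $a_np_n$ reappears as the extra corner term in the last two displayed cases. Finally, $[A,E^{-}]$ is symmetric, being a commutator of the symmetric matrix $A$ with the antisymmetric matrix $E^{-}$, and $(x-y)E^{+}$ is symmetric, so $\tilde B$ is symmetric, which together with the cases above pins down every entry.

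I do not expect a conceptual difficulty here: the whole argument is bookkeeping with the recurrence. The one place where care is essential is the two corner cases $k=n=\ell$ and $k=n,\ \ell=n-1$, where the ``missing $a_np_n$ residue'' effect and the ``sign/argument flip near the diagonal'' effect occur simultaneously, so the leftover terms from both mechanisms must be collected and combined without error; checking those two entries against the stated formulas by a separate direct expansion is the natural safeguard.
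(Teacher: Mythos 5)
Your proposal is correct and takes essentially the same route as the paper's proof: entrywise expansion of the commutator via \eqref{eq:AM}, collapsing with the three-term recurrence, with the identical case split (interior, $k=n$, $k=\ell+1$, $k=\ell$, and the two corners where the $a_np_n$ residue and the diagonal/antisymmetry corrections coincide), plus symmetry to determine the $k<\ell$ entries. No gaps.
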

\begin{proof}
  \noindent If $1 < k < n$ and $\ell + 1 < k$ there are no boundary effects.  Hence \eqref{eq:AM} becomes
  \begin{align*}
    [A,E^{-}(x,y)]_{k,\ell} = 
    &(a_{k-1} p_{k-2}(x) + b_k p_{k-1}(x) + a_k p_{k}(x))p_{\ell-1}(y) - \\
    &p_{k-1}(x)(a_{\ell-1} p_{\ell-2}(y) + b_\ell p_{\ell-1}(y) + a_\ell p_{\ell}(y)) \\
    =&(x-y)p_{k-1}(x)p_{\ell-1}(y).
  \end{align*}
  The formula additionally holds when $k = 1,$ as the orthogonal polynomial recurrence for this case holds with $a_0 = 0.$

  \vspace{1cm}
  \noindent $k = n:$  
  When $k = n$ and $\ell + 1 < n,$ due to the[A,M] + W border at the bottom of the matrix, we have
  \begin{align*}
    [A,E^{-}(x,y)]_{n,\ell} = 
    &(a_{n-1} p_{n-2}(x) + b_n p_{n-1}(x) + 0 \cdot p_{n}(x))p_{\ell-1}(y) - \\
    &p_{n-1}(x)(a_{\ell-1} p_{\ell-2}(y) + b_\ell p_{\ell-1}(y) + a_\ell p_{\ell}(y)) \\
    =&(x-y)p_{n-1}(x)p_{\ell-1}(y) - p_n(x) p_{\ell-1}(y),
  \end{align*}
  using that $a_n$ is taken to be $1.$  

  \vspace{1cm}
  \noindent $k = \ell+1:$  
  When $k = \ell+1$ and $\ell + 1 < n,$ due to the diagonal of $E^{-}$ being $0,$ we have
  \begin{align*}
    [A,E^{-}(x,y)]_{\ell+1,\ell} = 
    &(0 \cdot p_{\ell-1}(x) + b_{\ell+1} p_{\ell}(x) + a_{\ell+1} p_{\ell+1}(x))p_{\ell-1}(y) - \\
    &p_{\ell}(x)(a_{\ell-1} p_{\ell-2}(y) + b_\ell p_{\ell-1}(y) + 0 \cdot p_{\ell}(y)) \\
    =&(x-y)p_{\ell}(x)p_{\ell-1}(y) - a_\ell (p_{\ell-1}(x)p_{\ell-1}(y) - p_\ell(x)p_\ell(y)).
  \end{align*}
  If $\ell+1 = n,$ then we must subtract from the expression above for $[A,E^{-}(x,y)]_{n,n-1}$ any terms containing $a_n.$

  \vspace{1cm}
  \noindent $k = \ell:$ 
  When $k=\ell,$ antisymmetry of $E^{-}$ plays a larger role.  Taking care that the terms above the diagonal are properly accounted, for $k=\ell < n,$ 
  \begin{align*}
    [A,E^{-}(x,y)]_{\ell,\ell} = 
    &(0 \cdot p_{\ell-2}(x) + 0 \cdot p_{\ell-1}(x) + 2 a_\ell p_{\ell}(x))p_{\ell-1}(y) - \\
    &p_{\ell-1}(x)(2a_{\ell-1} p_{\ell-2}(y) + 0 \cdot p_{\ell-1}(y) + 0 \cdot p_{\ell}(y)) \\
    =&(x-y)p_{\ell-1}(x)p_{\ell-1}(y) 
    - a_{\ell-1} p_{\ell-2}(x)p_{\ell-1}(y) 
    + a_{\ell} p_{\ell}(x)p_{\ell-1}(y) \\
    -&a_{\ell-1} p_{\ell-1}(x) p_{\ell-2}(y)
    + a_\ell p_{\ell-1}(x) p_{\ell}(y). 
  \end{align*}
  If $k=\ell = n,$ then we must subtract from the expression above for $[A,E^{-}(x,y)]_{n,n-1}$ any terms containing $a_n.$
\end{proof}

In the case that we pick $x=\lambda_i$ and $y=\lambda_j$ to be eigenvalues of $A,$ the error term $\tilde B(\lambda_i,\lambda_j)$ will be tridiagonal.  We begin by characterizing the space of matrices whose commutator with $A$ is tridiagonal.  Recall that $\left\{ \lambda_i : 1 \leq i \leq n \right\}$ are the eigenvalues of $A.$  Essentially everything we do here is predicated on working with tridiagonal matrices $A$ which have nonzero offdiagonal entries.  We call these tridiagonal matrices \emph{nondegenerate}.

\begin{theorem}
  Suppose that $A$ is \emph{nondegenerate} and all $\left\{ \lambda_i \right\}$ are distinct.
  Let $V \subset \mathbb{M}_{n}$ be the space of antisymmetric matrices so that for all $v \in V,$
  \[
    [A,v] \in \TS.
  \]
  The space $V$ is spanned by $\left\{ E^{-}(\lambda_i,\lambda_i) : 1 \leq i \leq n\right\}.$ 
  \label{thm:span}
\end{theorem}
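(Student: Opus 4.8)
The plan is to identify $V$ by a dimension count together with an explicit verification that each $E^{-}(\lambda_i,\lambda_i)$ lies in $V$ and that these matrices are linearly independent. First I would establish the upper bound $\dim V \le n$. Consider the linear map $\Phi \colon v \mapsto [A,v]$ from the space of $n\times n$ antisymmetric matrices (dimension $\binom{n}{2}$) to $\mathbb{M}_n$. The condition $[A,v]\in\TS$ asks that the image land in the tridiagonal subspace, i.e.\ that all entries of $[A,v]$ with $|k-\ell|\ge 2$ vanish. Using the commutator formula \eqref{eq:AM}, for $k > \ell+1$ one has $[A,v]_{k,\ell} = a_{k-1}v_{k-1,\ell} + b_k v_{k,\ell} + a_k v_{k+1,\ell} - a_{\ell-1}v_{k,\ell-1} - b_\ell v_{k,\ell} - a_\ell v_{k,\ell+1}$; since $A$ is nondegenerate, the leading coefficient $a_k$ (or the appropriate boundary term) is nonzero, so each such equation lets one solve for one ``new'' entry of $v$ in terms of entries closer to the diagonal. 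A careful bookkeeping of this triangular system shows the constraints are independent and number $\binom{n}{2}-n$, giving $\dim V \le n$.

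Next I would check that each $E^{-}(\lambda_i,\lambda_i) \in V$. By Proposition~\ref{prop:commutator} with $x=y=\lambda_i$, we have $[A,E^{-}(\lambda_i,\lambda_i)] = (\lambda_i-\lambda_i)E^{+}(\lambda_i,\lambda_i) + \tilde B(\lambda_i,\lambda_i) = \tilde B(\lambda_i,\lambda_i)$, and inspecting the cases in the statement of Proposition~\ref{prop:commutator} one sees $\tilde B(x,x)$ is supported only on the diagonal, the first sub/super-diagonal, and the bottom row/right column --- the latter because $p_n(\lambda_i)=0$, so the $k=n,\ \ell<n-1$ entry $-p_n(\lambda_i)p_{\ell-1}(\lambda_i)$ vanishes. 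Hence $\tilde B(\lambda_i,\lambda_i)$ is tridiagonal, so $E^{-}(\lambda_i,\lambda_i)\in V$.

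Finally I would show $\{E^{-}(\lambda_i,\lambda_i)\}_{i=1}^n$ is linearly independent, which combined with $\dim V\le n$ forces it to be a basis. Suppose $\sum_i c_i E^{-}(\lambda_i,\lambda_i) = 0$. Reading off the $(2,1)$ entry gives $\sum_i c_i\, p_1(\lambda_i)p_0(\lambda_i) = \sum_i c_i\, p_1(\lambda_i)=0$ (up to the harmless factor from the $\tfrac12$ on the diagonal, which is irrelevant off-diagonal); more generally the $(k,1)$ entry for $k\ge 2$ gives $\sum_i c_i\, p_{k-1}(\lambda_i)p_0(\lambda_i)=\sum_i c_i\, p_{k-1}(\lambda_i)=0$. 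So the vector $(c_1,\dots,c_n)$ is orthogonal to each of the rows $(p_{k-1}(\lambda_1),\dots,p_{k-1}(\lambda_n))$ for $k=1,\dots,n$; since $A$ is nondegenerate with distinct eigenvalues these rows (suitably weighted by the $q_i$) form the orthogonal matrix $O$ of \eqref{eq:COB} and hence span $\R^n$, forcing all $c_i=0$. (Equivalently: evaluating the first column of $E^{-}(\lambda_i,\lambda_i)$ recovers the $i$-th eigenvector up to normalization, and distinct eigenvectors of a symmetric matrix are independent.)

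The main obstacle is the first step: making the dimension count rigorous requires verifying that the $\binom{n}{2}-n$ linear constraints cut out by ``entries at distance $\ge 2$ from the diagonal vanish'' are genuinely independent, i.e.\ that the triangular elimination using the nonzero $a_k$'s never degenerates --- one must be careful about the boundary rows/columns (where $a_0$ or $a_n$ conventions enter) and confirm that no redundancy is introduced there. Everything else (the Proposition~\ref{prop:commutator} computation and the independence of eigenvectors) is routine.
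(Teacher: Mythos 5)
There is a genuine error in your proposal: the matrices $\left\{ E^{-}(\lambda_i,\lambda_i) \right\}_{i=1}^n$ are \emph{not} linearly independent. They satisfy the relation $\sum_{i=1}^n q_i^2 E^{-}(\lambda_i,\lambda_i) = 0$, since for $k \neq \ell$ the $(k,\ell)$ entry of this sum is $\pm\sum_i q_i^2 p_{k-1}(\lambda_i)p_{\ell-1}(\lambda_i) = 0$ by orthogonality, and the diagonal of $E^{-}$ vanishes identically. Your own independence argument already betrays this: the $(1,1)$ entry lies on the (zero) diagonal of $E^{-}$, so you only obtain the $n-1$ conditions $\sum_i c_i\,p_{k-1}(\lambda_i)=0$ for $k=2,\dots,n$, which pin down $(c_i)$ only up to a multiple of $(q_i^2)$ — exactly the kernel above. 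So the span of these $n$ matrices is $(n-1)$-dimensional, not $n$-dimensional, and your intended conclusion ``$n$ independent elements inside a space of dimension $\le n$'' cannot be run.

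The dimension count also needs repair: the number of entries of $[A,v]$ strictly below the subdiagonal is $\binom{n}{2}-(n-1)=\binom{n-1}{2}$, not $\binom{n}{2}-n$, so even granting independence of the constraints the correct conclusion is $\dim V = n-1$, not $\dim V \le n$. The paper gets the upper bound $\dim V \le n-1$ more directly: for antisymmetric $v$ the first column has only $n-1$ free entries (since $v_{1,1}=0$), and when $A$ is nondegenerate the tridiagonality constraints determine the remainder of $v$ from its first column by triangular elimination. Combined with the lower bound $\dim\operatorname{Span}\left\{ E^{-}(\lambda_i,\lambda_i) \right\} \ge n-1$ (read off the first columns, which are the vectors $(p_{k-1}(\lambda_i))_k$ up to sign, of which any $n-1$ are independent), and the containment $E^{-}(\lambda_i,\lambda_i)\in V$ (your verification of this step via Proposition~\ref{prop:commutator} and $p_n(\lambda_i)=0$ is correct), this yields the theorem. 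Your argument becomes correct once you replace ``the $n$ matrices are independent'' by ``their span is exactly $(n-1)$-dimensional'' and sharpen the upper bound to $n-1$.
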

\begin{proof}
  The argument is by dimension counting.  We start by showing that the dimension of $V$ is $n-1.$  Observe first that there is a linear dependence among the $V$ that follows from the orthogonality of the polynomials. For any $\ell > r$ in $[n],$
  \[
    \sum_{i=1}^n q_i^2 E^{-}(\lambda_i,\lambda_i)_{\ell,r} 
    = 
    \sum_{i=1}^n q_i^2 p_{\ell-1}(\lambda_i) p_{r-1}(\lambda_i) = 0. 
  \]
  As the diagonal of $E^{-}$ is identically $0,$ this linear combination of these matrices is identically $0.$  On the other hand, looking at the first columns of $E^{-}(\lambda_i,\lambda_i),$ the span of these first columns is at least $n-1$ dimensional from the linear independence of $\left\{ p_k : k=1,2,\dots,n-1 \right\}.$

  On the other hand, by the lattice path solution, after picking the first column of $v,$ the constraint that $[A,v] \in \TS$ determines the remainder of $v$ when $A$ has nonzero offdiagonal entries.  Hence, this space is at most $(n-1)$-dimensional, which completes the proof.
\end{proof}

\begin{remark}
  The requirement that $A$ have nonzero offdiagonal entries is necessary for the previous theorem.  When $A$ is diagonal for example, the commutator of any tridiagonal matrix with $A$ will be tridiagonal.
\end{remark}

\subsection{Symmetrized form}

Define the symmetrized matrix
\[
  H^{-}(x,y)
  = \frac{E^{-}(x,y) - E^{-}(y,x)}{x-y}.
\]
Define $L(x,y) = [A,H^{-}(x,y)] - E^{+}(x,y) - E^{+}(y,x).$
By Proposition~\ref{prop:commutator} we have that
\begin{equation}
  L(x,y)_{k,\ell}
    =\begin{cases}
      \frac{-p_n(x)p_{\ell-1}(y) + p_{\ell-1}(x)p_n(y)}{x-y}, & \text{ if } k = n, \ell {\leq} n-1, \\
      \frac{-2p_n(x)p_{n-1}(y) + 2p_{n-1}(x)p_n(y)}{x-y}, & \text{ if } k = n, \ell = n, \\
      0, & \text{ otherwise.}
    \end{cases}
  \label{eq:L}
\end{equation}
Observe that this error vanishes entirely when $x$ and $y$ are chosen to be eigenvalues of $A.$  In short, this is because the matrix $H^{-}$ has a simple expression when conjugated by $O$ (recall \eqref{eq:COB} and \eqref{eq:spec3})
\begin{equation*}
  O^t H^{-}(\lambda_i, \lambda_j) O = \frac{\delta_{i,j} + \delta_{j,i}}{q_iq_j(\lambda_i - \lambda_j)},
\end{equation*}

with $\delta_{i,j}$ the matrix with a single nonzero entry, equal to $1,$ in position $(i,j).$
This gives an alternative derivation of \eqref{eq:L} for the case that $x$ and $y$ are eigenvalues by writing (and recalling \eqref{eq:spec1})
\begin{equation}
  \label{eq:H-}
  [A,H^{-}(\lambda_i,\lambda_j)]
  = O\biggl[\Lambda, \frac{\delta_{i,j} - \delta_{j,i}}{q_iq_j(\lambda_i - \lambda_j)}\biggr]O^t
  = O\biggl(\frac{\delta_{i,j} + \delta_{j,i}}{q_iq_j}\biggr)O^{t}
  = {E^{+}(\lambda_i,\lambda_j) + E^{+}(\lambda_j,\lambda_i)}.
\end{equation}

This will prove useful in what follows, but it will also be helpful to define another preimage of $E^{+}(\lambda_i,\lambda_j) + E^{+}(\lambda_j,\lambda_i) + \TS$ that vanishes in the first column.
The matrix $H^{-}$ in its first column has $k$-th entry
  \[
    H^{-}(\lambda_i, \lambda_j)_{k,1}
    =\frac{p_{k-1}(\lambda_i) - p_{k-1}(\lambda_j)}{\lambda_i - \lambda_j}.
  \]
  This is equal to the $k$-th entry of the first column of the matrix 
  \begin{equation*}
    \frac{E^{-}(\lambda_i,\lambda_i) - E^{-}(\lambda_j,\lambda_j)}{\lambda_i-\lambda_j}.
  \end{equation*}
Hence, we define
\begin{equation}
H_0^{-}(x,y)
= \frac{E^{-}(x,y) - E^{-}(y,x)-E^{-}(x,x) + E^{-}(y,y)}{x-y},
\label{eq:H0}
\end{equation}
which satisfies $[A,H_0^{-}(\lambda_i,\lambda_j)] - E^{+}(\lambda_i,\lambda_j) - E^{+}(\lambda_j,\lambda_i) \in \TS$ and $H_0^{-} e_1=0,$ with $e_1$ the first standard basis vector.

Recapitulating: we would like to solve the equation 
\[
  [A,M] + W \in \TS
\]
where $W$ is an arbitrary symmetric matrix for $M$.  If $W$ is in the span of 
\[
\left\{ {E^{+}(\lambda_i,\lambda_j) + E^{+}(\lambda_j,\lambda_i), i < j} \right\},
\]
then we can do this exactly without the need for a tridiagonal error.  Thus it essentially remains to determine how to solve for 
\[
  [A,M] + E^{+}(\lambda_i,\lambda_i) \in \TS
\]
for each $1\leq i \leq n.$
Noting that $O^t E^{+}(\lambda_i,\lambda_i) O = q_i^{-2} \delta_{i,i},$ the combination of these matrices and 
\[
  \left\{ {E^{+}(\lambda_i,\lambda_j) + E^{+}(\lambda_j,\lambda_i)}, i < j \right\},
\]
clearly span all symmetric matrices, and so we will have a complete solution.

Using this discussion, we can give a quick proof of Theorems \ref{thm:tri2}.
\begin{proof}[Proof of Theorem~\ref{thm:tri2}]
  We recall the setup for the theorem.  Suppose that $\MP{W}$ is a stochastic matrix diffusion that satisfies 
  \[
  d W_t = \sqrt{\tfrac{2}{\beta}} O_t dB_t O_t^t + O_t dZ_t O_t,
  \] 
  where $\MP{O}$ is the eigenvector matrix of $\MP{W}$. Let $\MP{\Lambda}$ be the eigenvalue matrix of $\MP{W}$, we check that 
  \begin{align*}
  d \Lambda_t &= O_t^t dW_t O_t + [\Lambda_t, dN_t] + \frac{1}{2} [ O_t^t dW_t O_t + d\Lambda_t, dN_t]\\
  & =\sqrt{\tfrac{2}{\beta}}dB_t+dZ_t + [\Lambda_t, dN_t] + \frac{1}{2} [ \sqrt{\tfrac{2}{\beta}}dB_t+dZ_t+ d\Lambda_t, dN_t]
  \end{align*}
  where $N_t$ is chosen so that $dN_{i,j,t}(\lambda_i-\lambda_j) = dZ_{i,j,t}$ which gives perfect cancelation of the $dZ_t$ term. This induces the evolution of $\MP{O}$ and so the evolution of $\MP{W}$. By independence the previous equation simplifies further to 
  \[
  d \Lambda_t =\sqrt{\tfrac{2}{\beta}}dB_t+ \frac{1}{2} [ dZ_t, dN_t].
  \]
  Further
    \[
    [dZ_t, dN_t ]_{i,i}
    = \sum_{j \neq i} \frac{
      dZ_{i,j,t}dZ_{j,i,t}
    }{\lambda_{j,t}-\lambda_{i,t}}
    -\sum_{j \neq i} \frac{
      dZ_{i,j,t}dZ_{j,i,t}
    }{\lambda_{i,t}-\lambda_{j,t}}
    = \sum_{j \neq i} \frac{
    -2dt}{\lambda_{i,t}-\lambda_{j,t}}.
  \]
  This gives us that $\Lambda$ evolves according to a Dyson Brownian motion with zero potential. To get the conclusion of the theorem we can take any tridiagonal evolution of $\MP{W}$ and the diffusion equation follows from Theorem \ref{thm:dysoniso}.
\end{proof}

\subsection{Confluent form}

Going forward, we will essentially always specialize $x$ and $y$ to be eigenvalues.  For eigenvalues the matrix $B$ simplifies.  So, we define
\begin{equation}
    B(x,y)_{k,\ell}
    =\begin{cases}
      -a_{\ell-1} p_{\ell-2}(x)p_{\ell-1}(y) + a_\ell p_{\ell-1}(x) p_{\ell}(y)&  \\
      -a_{\ell-1} p_{\ell-1}(x)p_{\ell-2}(y) + a_\ell p_{\ell}(x) p_{\ell-1}(y),  & \text{ if } k = \ell, \\
      -a_\ell (p_{\ell-1}(x)p_{\ell-1}(y) - p_\ell(x)p_\ell(y)), & \text{ if } k = \ell + 1,\\
      0, & \text{ otherwise,}
    \end{cases}
    \label{eq:B}
\end{equation}
and observe that for eigenvalues $\lambda_1$ and $\lambda_2,$ $B(\lambda_1,\lambda_2) = \tilde B(\lambda_1,\lambda_2).$

One possible choice for $x$ and $y$ is to take them equal.  Indeed, by symmetrizing the $E^{-},$ dividing by $x-y$ and sending $x \to y$ we are also led to the existence of an approximate preimage to $E^{+}(y,y)$ that involves derivatives of $E^{-}.$  A more direct identity is possible simply by differentiating $E^{-}.$  Observe that
\[
  \partial_y [A,E^{-}(x,y)] = [A, \partial_y E^{-}(x,y)],
\]
by virtue of $A$ having no $y$ dependence.  Hence,
\[
  [A, \partial_y E^-(x,y)] 
  = \partial_y [A, E^-(x,y)]
  = -E^+(x,y) + (x-y)\partial_y E^+(x,y) + \partial_y\tilde B(x,y).
\]
This operation can be iterated to produce a basis of derivatives, by using
\[
  [A, \partial_x^k \partial_y^\ell E^{-}(x,y)]
  =\partial_x^k \partial_y^\ell \left( (x-y)E^{+}(x,y) + \tilde B(x,y) \right),
\]
but we do not pursue this further.
If we define $F(y) = -\partial_y E^{-}(x,y) \vert_{x=y},$ then
\begin{equation}
  [A,F(y)] = E^{+}(y,y) - \partial_y \tilde B(x,y) \vert_{x=y}.
  \label{eq:F}
\end{equation}
We write some entries of this error term:
\[
  -\partial_y \tilde B(x,y) \vert_{x=y}
  =\begin{cases}
    a_{\ell-1} p_{\ell-2}(y)p_{\ell-1}'(y) - a_\ell p_{\ell-1}(y) p_{\ell}'(y)&  \\
    +a_{\ell-1} p_{\ell-1}(y)p_{\ell-2}'(y) - a_\ell p_{\ell}(y) p_{\ell-1}'(y),  & \text{ if } k = \ell  < n, \\
    a_\ell (p_{\ell-1}(y)p_{\ell-1}'(y) - p_\ell(y)p_\ell'(y)), & \text{ if } k = \ell + 1 < n,\\
    p_n(y)p_{\ell-1}'(y), & \text{ if } k = n, \ell < n-1, \\
    \text{other}, & \text{ if } k = n, {\ell \geq n-1}, \text{ or } \\
    0, & \text{ otherwise.}
  \end{cases}
\]

Specializing to $x=y=\lambda_i$ for a zero $\lambda_i $ of $p_n$ will be especially useful here, as in this case the boundary term above is tridiagonal. In addition, in this case, the formulas for the tridiagonal that hold in the middle of the matrix also hold in the final entries.  Otherwise said, at an eigenvalue $\lambda_i$ we define
\begin{equation}
  \label{eq:G}
  G(\lambda_i)
  =
  -\partial_y \tilde B(x,y) \vert_{x=y=\lambda_i}
  = 
  -\partial_y B(x,y)\vert_{x=y=\lambda_i} 
  = 
  -\frac{1}{2}\partial_y B(y,y)\vert_{y=\lambda_i}, 
\end{equation}
%for any eigenvalue $\lambda_i,$
and we define $G(y) = -\frac{1}{2}\partial_y B(y,y)$ for general $y.$

%However, it also requires introduction of yet another family of polynomials, $p_\ell'(x)$ to describe.  We can avoid referring to this family of polynomials, however, by expressing $\partial_y E^{-}(x,y)$ in terms of $E^{-}(x,\lambda_i).$

\subsection{Difference quotients and minor polynomials}
\label{sec:minors}

Define the difference quotient operator of a polynomial as
\[
  (\Delta^y f)(x) = \frac{f(x)-f(y)}{x-y}.
\]
We always consider $\Delta^y$ as a map on polynomials.  In particular, for $x=y,$ the previous definition extends by continuity.
For a multivariate polynomial, let $\Delta^y_x$ denote the corresponding partial difference quotient in the $x$ variable at $y.$

For any polynomial $p,$ the function  
\[
  \Delta^y p(x) = \frac{p(x)-p(y)}{x-y}
\]
is a polynomial in two variables, which as a polynomial in $x$ has degree one less than $p.$  By continuity, it takes the value $p'(x)$ at $y=x.$
With this in mind, we define the polynomials for any $0 \leq k,\ell \leq n-1,$
\begin{equation}
  p^{(k)}_{\ell}(x) = \sum_{j=1}^n q_j^2 
  p_k(\lambda_j)
  \frac{p_\ell(x)-p_\ell(\lambda_j)}{x-\lambda_j},
  \label{eq:minor}
\end{equation}
for $x$ not in the spectrum.  Observe that if $\ell \leq k,$ then by orthogonality, this polynomial is $0.$  On the other hand, observe that
\begin{equation*}
  \begin{aligned}
    xp^{(k)}_{\ell}(x) 
    &= 
    \sum_{j=1}^n q_j^2 
    p_k(\lambda_j)
    \frac{xp_\ell(x)-xp_\ell(\lambda_j)}{x-\lambda_j} \\
    &= 
    \sum_{j=1}^n q_j^2 
    p_k(\lambda_j)
    \biggl\{
      \frac{xp_\ell(x)-\lambda_j p_\ell(\lambda_j)}{x-\lambda_j} 
      -p_\ell(\lambda_j)
    \biggr\}
    \\
    &=
    a_{\ell+1}p^{(k)}_{\ell+1}(x) 
    +b_{\ell+1}p^{(k)}_{\ell}(x) 
    +a_{\ell}p^{(k)}_{\ell-1}(x) 
    -\delta_{k,\ell}.
  \end{aligned}
\end{equation*}
This is to say that these polynomials satisfy the same $3$-term recurrence as $\left\{ p_\ell \right\}_{\ell}$ but started from different initial conditions.  This also implies that 
\[
  \deg(p^{(k)}_{\ell}) = (\ell-k-1)_{+} \quad \text{and} \quad p_{\ell+1}^{(\ell)}(x) = \frac{1}{a_{\ell+1}}.
\]
Otherwise stated, they are multiples of the orthogonal polynomials associated to the lower-right-principal submatrix of $A$ of size $n-k-1.$  
Furthermore, we have the identity that
\[
  \Delta^y p(x)
  =
  \frac{p_\ell(x)-p_\ell(y)}{x-y}
  =
  \sum_{k = 1}^\ell p^{(k-1)}_{\ell}(x) p_{k-1}(y),
\]
which by continuity specializes to 
\[
  p_\ell'(x) = \sum_{k = 1}^\ell p^{(k-1)}_{\ell}(x) p_{k-1}(x).
\]

We also observe that
\begin{equation}
  \sum_{k = 1}^\ell p^{(k-1)}_{\ell}(x) p_{k-1}'(y)
  =
  \partial_y \Delta^y p_\ell(x)
  =
  \frac{(p_\ell(x)-p_\ell(y))-(x-y)p_\ell'(y)}{(x-y)^2}
  = \Delta^y \Delta^y p_\ell(x),
  \label{eq:secondderiv}
\end{equation}
which by continuity specializes to $\frac{1}{2}p_\ell''(y)$ when $x=y.$

\subsubsection*{Approximating the minor polynomials}

Because the minor polynomials satisfy the same recurrence as the original polynomials, it is possible to formulate a mixed Christoffel--Darboux formula using the two sets of polynomials, namely for $r < \ell$
\begin{equation}
  \sum_{k=1}^{\ell} p_{k-1}(x) p_{k-1}^{(r)}(y) (x-y) = 
  a_{\ell} \left( 
  p_{\ell}(x) p_{\ell-1}^{(r)}(y)
  -p_{\ell-1}(x) p_{\ell}^{(r)}(y) \right) + p_{r}(x),
  \label{eq:CDmixed}
\end{equation}
where the extra $p_r(x)$ results from the defect in the recurrence for the minor polynomial in the bottom term.  Note that on taking $x=y$ in this formula, we are left with
\begin{equation}
p_{\ell-1}(x) p_{\ell}^{(r)}(x)
-
p_{\ell}(x) p_{\ell-1}^{(r)}(x)
=\frac{p_{r}(x)}{a_\ell},
  \label{eq:Wronskian}
\end{equation}
which shows that $\left\{ ((p_\ell), (p_\ell^{(0)})), \ell=1,2,\dots \right\}$ forms a fundamental set of solutions to the three--term recurrence for any initial conditions.  In particular, 
\begin{equation}
  \label{eq:MinorIdentity}
  p^{(j)}_\ell(x) = 
  \begin{cases}
  p_j(x) p_\ell^{(0)}(x)
  - 
  p_\ell(x)p_j^{(0)}(x), & j \leq \ell, \\
  0, & \text{otherwise,}
\end{cases}
\end{equation}
observing that the $3$--term--recurrence is trivially satisfied, and that the initial conditions at $\ell=j$ and $\ell=j+1$ are satisfied.  
This identity also shows that $\{p_{\ell}^{(j)}\}$ satisfies a recurrence in $j$ as well, specifically that for $j < \ell$
\begin{equation}
  \label{eq:DualRecurrence}
  x p^{(j)}_\ell(x) = 
  a_{j+1}p^{(j+1)}_\ell(x) 
  +b_{j+1}p^{(j)}_\ell(x) 
  +a_{j}p^{(j-1)}_\ell(x). 
\end{equation}
This formula is also true for $j=0,$ taking $a_0 = 1$ and $p^{(-1)}_\ell(x) = p_\ell(x),$ as it is equivalent to \eqref{eq:MinorIdentity} with $j=1.$

\subsubsection*{Perturbation theory for orthogonal polynomials}

The dual recurrence \eqref{eq:DualRecurrence} is used in \cite{Geronimo} to create a perturbation theory for orthogonal polynomials, one which shows that when two families of orthogonal polynomials have similar coefficients, they can be compared.  We present this perturbation theory here. 
Suppose that $\{\hat p_j(x)\}$ are defined by
\begin{align*}
\hat b_1 \hat p_0(x)+  \hat a_1  \hat p_1(x) & =x  \hat p_0(x)\\
\hat a_{j} \hat p_{j-1}(x) +  \hat b_{j+1}  \hat p_{j}(x) +  \hat a_{j+1}  \hat p_{j+1}(x) & =x  \hat p_{j}(x),
\end{align*}
for $j = 1,2,3,\dots.$  If we multiply \eqref{eq:DualRecurrence} through by $\hat p_j$ and subtract the previous equation multiplied by  $p^{(j)}_\ell(x),$ we get
\[
  0 =
  (a_{j+1}p^{(j+1)}_\ell(x) 
  +b_{j+1}p^{(j)}_\ell(x) 
  +a_{j}p^{(j-1)}_\ell(x))\hat p_j(x)
  -
  (\hat a_{j} \hat p_{j-1}(x) +  \hat b_{j+1}  \hat p_{j}(x) +  \hat a_{j+1}  \hat p_{j+1}(x)
  )p^{(j)}_\ell(x). 
\]
Hence, if we sum this identity for $j=0,1,\dots,\ell-1,$ we get
\begin{align*}
  0
  &=
  \sum_{j=0}^{\ell-1} (b_{j+1} - \hat b_{j+1})p^{(j)}_\ell(x)\hat p_j(x) 
  +
  \sum_{j=0}^{\ell-2} (a_{j+1} - \hat a_{j+1})
  (
  p^{(j+1)}_\ell(x)\hat p_j(x) 
  +p^{(j)}_\ell(x)\hat p_{j+1}(x) 
  ) \\
  &+a_0 p^{(-1)}_\ell(x)\hat p_0(x)-\hat a_\ell \hat p_\ell(x) p_\ell^{(\ell-1)}(x).
\end{align*}
In summary,
\begin{equation}
  \begin{aligned}
  \frac{\hat a_\ell}{a_\ell} \hat p_\ell(x)
  &=
  p_\ell(x)\hat p_0(x) \\
  &+
  \sum_{j=1}^{\ell} (b_{j} - \hat b_{j})p^{(j-1)}_\ell(x)\hat p_{j-1}(x) 
  \\
  &+
  \sum_{j=1}^{\ell-1} (a_{j} - \hat a_{j})
  (
  p^{(j)}_\ell(x)\hat p_{j-1}(x) 
  +p^{(j-1)}_\ell(x)\hat p_{j}(x) 
  ). 
\end{aligned}
  \label{eq:Perturb}
\end{equation}
Here we take $\hat{p}_j(x) = p_{j+1}^{(0)}(x),$ so that $\hat a_j = a_{j+1},$ $\hat b_j = b_{j+1},$ and $\hat p_0(x) = \frac{1}{a_1}.$  Specializing these formulas, we get
\begin{equation}
  \begin{aligned}
     p^{(0)}_{\ell+1}(x)
  &=
  \frac{a_\ell}{a_{\ell+1}a_1}p_\ell(x) \\
  &+
  \frac{a_\ell}{a_{\ell+1}}\sum_{j=1}^{\ell} (b_{j} - b_{j+1})p^{(j-1)}_\ell(x)p_{j}^{(0)}(x) 
  \\
  &+
  \frac{a_\ell}{a_{\ell+1}}\sum_{j=1}^{\ell-1} (a_{j} - a_{j+1})
  (
  p^{(j)}_\ell(x)p_{j}^{(0)}(x) 
  +p^{(j-1)}_\ell(x)p_{j+1}^{(0)}(x) 
  ). 
\end{aligned}
  \label{eq:MinorPerturb}
\end{equation}

\subsection{Summary}
\label{sec:summary}

The summary of the previous discussion is that we have a complete description of the solutions of the commutator equation \eqref{eq:comm} in terms of the orthogonal polynomials associated to $A.$
\begin{theorem}
  A solution to the commutator equation
  \[
    [A,M] - W \in \TS
  \]
  for antisymmetric $M$ in terms of a symmetric $W$ is given by $M = M_1 + M_2$ where
  \begin{align*}
    M_1 &= \sum_{i=1}^n (O^t W O)_{i,i} q_i^2 F(\lambda_i), \text{ and} \\
    M_2 &= \sum_{i > j}^n (O^t W O)_{i,j}q_iq_j H_0^{-}(\lambda_i,\lambda_j).
  \end{align*}
  Moreover, this is the unique $M$ that vanishes in the first column.
  The tridiagonal error is given by
  \begin{equation*}
    \begin{aligned}
    [A,M]-W &= 
    \sum_{i=1}^n (O^t W O)_{i,i} q_i^2 G(\lambda_i)
    -\sum_{i > j}^n (O^t W O)_{i,j} q_iq_j
    \frac{B(\lambda_i,\lambda_i) - B(\lambda_j,\lambda_j)}{\lambda_i-\lambda_j} \\
    &=
    -\frac{1}{2} 
    \sum_{i,j=1}^n
    (O^t W O)_{i,j} q_iq_j
    \Delta^{\lambda_j}_{\lambda_i} B(\lambda_i,\lambda_i).
  \end{aligned}
  \end{equation*}
  All other solutions to the equation differ from this one by elements of
  \[
    \Span\left\{ E^{-}(\lambda_i,\lambda_i), i=1,2,\dots,n \right\}.
  \]
  \label{thm:comm}
\end{theorem}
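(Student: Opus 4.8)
The plan is to verify directly that $M=M_1+M_2$ solves $[A,M]-W\in\TS$ by expanding the commutator, showing that the non‑tridiagonal part collapses to exactly $W$, and reading off the tridiagonal remainder; uniqueness among solutions vanishing in the first column and the description of the full solution set will then follow from Theorem~\ref{thm:span}. First I would record two commutator identities already assembled in the text. Combining \eqref{eq:F} with the definition \eqref{eq:G} of $G$ gives, for each eigenvalue $\lambda_i$,
\[
  [A,F(\lambda_i)]=E^{+}(\lambda_i,\lambda_i)+G(\lambda_i),\qquad G(\lambda_i)\in\TS ,
\]
and, writing $H_0^{-}(x,y)=H^{-}(x,y)-\frac{E^{-}(x,x)-E^{-}(y,y)}{x-y}$ from \eqref{eq:H0}, using \eqref{eq:H-} together with Proposition~\ref{prop:commutator} at $x=y=\lambda_i$ (where $p_n(\lambda_i)=0$, so that $[A,E^{-}(\lambda_i,\lambda_i)]=B(\lambda_i,\lambda_i)\in\TS$), subtraction yields
\[
  [A,H_0^{-}(\lambda_i,\lambda_j)]=E^{+}(\lambda_i,\lambda_j)+E^{+}(\lambda_j,\lambda_i)-\frac{B(\lambda_i,\lambda_i)-B(\lambda_j,\lambda_j)}{\lambda_i-\lambda_j},
\]
the last term being symmetric and tridiagonal. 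I would also note that $F$ and $H_0^{-}$ are antisymmetric, so $M$ is antisymmetric; that $F(\lambda_i)$ has vanishing first column because $E^{-}(x,y)_{k,1}=p_{k-1}(x)\one[k>1]$ has no $y$‑dependence; and that $H_0^{-}$ has vanishing first column by construction.

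Next, by linearity,
\begin{align*}
  [A,M]&=\sum_{i}(O^tWO)_{i,i}\,q_i^2\bigl(E^{+}(\lambda_i,\lambda_i)+G(\lambda_i)\bigr)\\
  &\quad+\sum_{i>j}(O^tWO)_{i,j}\,q_iq_j\Bigl(E^{+}(\lambda_i,\lambda_j)+E^{+}(\lambda_j,\lambda_i)-\tfrac{B(\lambda_i,\lambda_i)-B(\lambda_j,\lambda_j)}{\lambda_i-\lambda_j}\Bigr).
\end{align*}
For the $E^{+}$ contributions I would invoke the spectral identities \eqref{eq:spec1} and \eqref{eq:spec2}, in the form $O^tE^{+}(\lambda_i,\lambda_i)O=q_i^{-2}\delta_{i,i}$ and $O^t\bigl(E^{+}(\lambda_i,\lambda_j)+E^{+}(\lambda_j,\lambda_i)\bigr)O=(q_iq_j)^{-1}(\delta_{i,j}+\delta_{j,i})$. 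The diagonal family then sums to $O\operatorname{diag}(O^tWO)O^t$, and --- using that $O^tWO$ is symmetric, so that the $i>j$ sum of $(O^tWO)_{i,j}(\delta_{i,j}+\delta_{j,i})$ recovers the \emph{entire} off‑diagonal part of $O^tWO$ --- the remaining family sums to $O\bigl(O^tWO-\operatorname{diag}(O^tWO)\bigr)O^t$. Adding these gives $O(O^tWO)O^t=W$, so that
\[
  [A,M]-W=\sum_{i}(O^tWO)_{i,i}\,q_i^2\,G(\lambda_i)-\sum_{i>j}(O^tWO)_{i,j}\,q_iq_j\,\frac{B(\lambda_i,\lambda_i)-B(\lambda_j,\lambda_j)}{\lambda_i-\lambda_j}\in\TS .
\]
To reach the compact form I would use the confluent case $G(\lambda_i)=-\tfrac12\,\partial_yB(y,y)\big|_{y=\lambda_i}$ of \eqref{eq:G}, rewrite it as $-\tfrac12\Delta^{\lambda_i}_{\lambda_i}B(\lambda_i,\lambda_i)$, and then, since both $(O^tWO)_{i,j}q_iq_j$ and $\Delta^{\lambda_j}_{\lambda_i}B(\lambda_i,\lambda_i)=\tfrac{B(\lambda_i,\lambda_i)-B(\lambda_j,\lambda_j)}{\lambda_i-\lambda_j}$ are symmetric under $i\leftrightarrow j$, merge the diagonal sum (supplying the $i=j$ terms) with twice the $i>j$ sum (supplying the $i\ne j$ terms) to obtain $-\tfrac12\sum_{i,j}(O^tWO)_{i,j}q_iq_j\,\Delta^{\lambda_j}_{\lambda_i}B(\lambda_i,\lambda_i)$.

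For uniqueness and the solution set: if $M'$ is any solution then $[A,M-M']\in\TS$, so $M-M'\in\Span\{E^{-}(\lambda_i,\lambda_i):1\le i\le n\}$ by Theorem~\ref{thm:span}, and conversely adding any element of this span preserves the solution property. If in addition $M'$ vanishes in the first column, then $M-M'$ lies in that span and has zero first column; but by the argument in the proof of Theorem~\ref{thm:span}, once $[A,v]\in\TS$ the first column of $v$ determines $v$ (using nondegeneracy of $A$), so $M-M'=0$, and $M$ is the unique such solution.

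The step I expect to demand the most care is the collapse just described: confirming that the non‑tridiagonal part is \emph{exactly} $W$ requires tracking the split of $O^tWO$ into its diagonal and off‑diagonal parts and using symmetry to upgrade the $i>j$ sum to a full off‑diagonal sum, while the rearrangement into the $\Delta^{\lambda_j}_{\lambda_i}$ form needs the matching symmetries of $B$ on the diagonal and of the coefficients $(O^tWO)_{i,j}q_iq_j$. Everything else follows directly from results already established --- Proposition~\ref{prop:commutator}, \eqref{eq:F}, \eqref{eq:G}, \eqref{eq:H-}, and Theorem~\ref{thm:span}.
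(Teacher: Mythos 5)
Your proposal is correct and follows essentially the same route as the paper: expand $W$ spectrally via \eqref{eq:spec1}--\eqref{eq:spec2}, apply the commutator identities \eqref{eq:F} and \eqref{eq:H-} (together with Proposition~\ref{prop:commutator} at eigenvalues to identify $[A,E^{-}(\lambda_i,\lambda_i)]=B(\lambda_i,\lambda_i)$), and invoke Theorem~\ref{thm:span} for the solution set and for uniqueness among solutions with vanishing first column. You simply carry out in detail the steps the paper leaves to the reader, and all of those details check out.
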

\begin{proof}
  From \eqref{eq:spec1} and \eqref{eq:spec2} we can expand $W$ as
  \begin{align*}
    W &= \sum_{i=1}^n (O^t W O)_{i,i} \cdot q_i^2 E^{+}(\lambda_i,\lambda_i)\\
     &+ \sum_{i > j}^n (O^t W O)_{i,j} \cdot q_iq_j \cdot
     \left\{ 
       E^{+}(\lambda_i,\lambda_j) + E^{+}(\lambda_j,\lambda_i)
     \right\}.
  \end{align*}
  Applying \eqref{eq:H-} and \eqref{eq:F}, the theorem now follows.  Theorem~\ref{thm:span} shows that any other solution to the equation differs from this one as desired.
  The uniqueness of $M$ follows from the invertibility of $O,$ i.e.\,the linear independence of $\left\{ (p_{k-1}(\lambda_i))_k, i=1,\dots,n \right\},$ which is what appears in the first column of $E^{-}(\lambda_i,\lambda_i)$ for $i=1,\dots,n.$ 
\end{proof}

If we restrict to the case that $W$ is tridiagonal itself, this leads to a family of orthogonality rules which have quartic dependence on the orthogonal polynomials. 
\begin{corollary}
  \label{cor:orth}
  For a symmetric tridiagonal matrix $W,$
    \[
    W + 
    \sum_{i=1}^n (O^t W O)_{i,i} q_i^2 G(\lambda_i)
    =
    \sum_{i > j}^n (O^t W O)_{i,j} q_iq_j
    \frac{B(\lambda_i,\lambda_i) - B(\lambda_j,\lambda_j)}{\lambda_i-\lambda_j}.
  \]
\end{corollary}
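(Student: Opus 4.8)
The plan is to specialize Theorem~\ref{thm:comm} to the case $W \in \TS$ and read the identity off the tridiagonal error formula, using the uniqueness clause of that theorem. First I would observe that when $W$ is itself tridiagonal, the zero matrix $M = 0$ is an antisymmetric solution of the commutator equation $[A,M] - W \in \TS$: indeed $[A,0] - W = -W \in \TS$ precisely because $W$ is tridiagonal. Moreover $M = 0$ trivially vanishes in the first column. Since Theorem~\ref{thm:comm} asserts that $M_1 + M_2$ is the \emph{unique} antisymmetric solution with vanishing first column, this forces $M_1 + M_2 = 0$ for every tridiagonal $W$.

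With that in hand I would substitute $M = M_1 + M_2 = 0$ into the tridiagonal error identity of Theorem~\ref{thm:comm}. The left-hand side $[A,M] - W$ becomes $-W$, while the right-hand side is unchanged, so
\[
  -W
  =
  \sum_{i=1}^n (O^t W O)_{i,i} q_i^2 G(\lambda_i)
  -\sum_{i > j}^n (O^t W O)_{i,j} q_iq_j
  \frac{B(\lambda_i,\lambda_i) - B(\lambda_j,\lambda_j)}{\lambda_i-\lambda_j}.
\]
Moving $W$ to the right and the double-difference sum to the left produces exactly the asserted equation. As an internal consistency check one can instead equate $-W$ with the symmetrized form $-\tfrac12 \sum_{i,j} (O^t W O)_{i,j} q_iq_j \Delta^{\lambda_j}_{\lambda_i} B(\lambda_i,\lambda_i)$ from Theorem~\ref{thm:comm}.

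If one prefers not to invoke the uniqueness clause verbatim, the alternative is to note that $[A, M_1+M_2] - W \in \TS$ together with $W \in \TS$ forces $[A, M_1 + M_2] \in \TS$, so $M_1 + M_2$ lies in the space $V$ of Theorem~\ref{thm:span}; since $M_1 + M_2$ has vanishing first column and the first-column map is injective on $V$ (its image is the $(n-1)$-dimensional span of the vectors $(p_{k-1}(\lambda_i))_{k}$ over $i$, matching $\dim V = n-1$), one again gets $M_1 + M_2 = 0$. Either way the corollary is an immediate consequence of Theorem~\ref{thm:comm}; there is no real obstacle, and the only point requiring care is the bookkeeping of the extra $-W$, which appears because Theorem~\ref{thm:comm} is stated with $[A,M] - W \in \TS$ rather than $[A,M] + W \in \TS$.
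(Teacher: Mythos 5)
Your proposal is correct and is essentially identical to the paper's own argument: both note that $M=0$ and $M=M_1+M_2$ each solve $[A,M]-W\in\TS$ with vanishing first column, invoke the uniqueness clause of Theorem~\ref{thm:comm} to conclude $M_1+M_2=0$, and then read the identity off the tridiagonal error formula. Your sign bookkeeping with the $-W$ is also right.
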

\begin{proof}
  Suppose that $W$ is tridiagonal.  Then on the one hand, $M=M_1+M_2,$ given in the theorem, is one solution to
  \[
    [A,M] - W \in \TS.
  \]
  A second solution is given by $M=0.$  As both vanish in the first column, we have that $M_1+M_2 = 0.$ 
\end{proof}

We can also use Theorem~\ref{thm:comm} to make a connection to the minor polynomials.  Suppose we wished to solve for a matrix $W$ that is supported in a single entry
\begin{corollary}
  Let $W^{k,\ell}$ be the matrix that is $1$ in the $(k,\ell)$ and $(\ell,k)$ entries, for $k > \ell$, and $0$ elsewhere.  
  %The $(u,r)$ entry with $u > r$ of the matrix $M^{k,\ell}$ with $0$ in the first column so that $[A,M^{k,\ell}] = W^{k,\ell} + \TS$ is given by
  %The $(u,r)$ entry with $u > r$ of 
  The matrix $M^{k,\ell}$ with $0$ in the first column so that $[A,M^{k,\ell}] = W^{k,\ell} + \TS$ is given by
  \[
    \begin{aligned}
    M
    &=
    -\frac{1}{2}\sum_{i,j=1}^n
    q_i^2
    q_j^2
    p_{k-1}(\lambda_i)
    p_{\ell-1}(\lambda_j)
    \Delta_{\lambda_j}^{\lambda_i} E^{-}(\lambda_i,\lambda_j), \\
    M_{u,r}
    &=
    -\frac{1}{2}\sum_{i}
    q_i^2
    p_{u-1}(\lambda_i)
    p_{k-1}(\lambda_i)p^{(\ell-1)}_{r-1}(\lambda_i),
  \end{aligned}
  \]
  for any $u > r.$
  When $k =\ell+1,$ this matrix is identically $0.$ We also define $M = 0$ if $k = \ell.$ 
  For other $k > \ell+1,$ the entry $M_{u,r}$ vanishes when $u-r \geq k-\ell,$ and it vanishes when $u+r \leq k+\ell.$
  The tridiagonal matrix $G^{k,\ell} = [A,M^{k,\ell}] - W^{k,\ell}$ is given by 
  %$G^{k,\ell} = \tilde{G}^{k,\ell}_{u,r} + \tilde{G}^{\ell,k}_{u,r}$ where 
  \[
    {G}^{k,\ell}
    =
    -
    \frac{1}{2}\sum_{i,j=1}^n
    q_i^2
    q_j^2
    p_{k-1}(\lambda_i)
    p_{\ell-1}(\lambda_j)
    \Delta_{\lambda_j}^{\lambda_i} B(\lambda_i,\lambda_j)
%.
%.    \begin{cases}
%.    a_{r-1} p_{r-2}(\lambda_i)p_{r-1}^{(\ell-1)}(\lambda_i) - a_r p_{r-1}(\lambda_i) p_{r}^{(\ell-1)}(\lambda_i)&  \\
%.    +a_{r-1} p_{r-1}(\lambda_i)p_{r-2}^{(\ell-1)}(\lambda_i) - a_r p_{r}(\lambda_i) p_{r-1}^{(\ell-1)}(\lambda_i),  & \text{ if } u = r  < n, \\
%.    a_r (p_{r-1}(\lambda_i)p_{r-1}^{(\ell-1)}(\lambda_i) - p_r(\lambda_i)p_r^{(\ell-1)}(\lambda_i)), & \text{ if } u = r + 1 < n,\\
%.    0, & \text{ otherwise.}
%.  \end{cases}
  \]
  When $k = \ell+1,$ this matrix is equal to $W^{k,\ell},$ and when $k=\ell,$ this matrix is $0$ everywhere except in the $(k,k)$ entry where it is $1.$
  \label{cor:entry}
\end{corollary}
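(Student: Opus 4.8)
The plan is to read this off as a direct specialization of Theorem~\ref{thm:comm} to the single-entry source $W = W^{k,\ell}$. The first thing to do is to compute $O^t W^{k,\ell} O$. Writing $W^{k,\ell} = \delta_{k,\ell} + \delta_{\ell,k}$ and using the explicit eigenvector matrix \eqref{eq:COB} — whose $m$-th column is $(q_i p_{m-1}(\lambda_i))_i$ — one gets
\[
  (O^t W^{k,\ell} O)_{i,j}
  = q_i q_j \bigl( p_{k-1}(\lambda_i) p_{\ell-1}(\lambda_j) + p_{\ell-1}(\lambda_i) p_{k-1}(\lambda_j) \bigr).
\]
Substituting this into the formulas $M = M_1 + M_2$ and $[A,M]-W = \cdots$ of Theorem~\ref{thm:comm}, and recalling that $F$ and $H_0^{-}$ are built from $E^{-}$ (via $F(y) = -\partial_y E^{-}(x,y)\vert_{x=y}$ and \eqref{eq:H0}), one reassembles the diagonal piece (carrying $F(\lambda_i)$) and the off-diagonal piece (carrying $H_0^{-}(\lambda_i,\lambda_j)$) into the single $i,j$-double sum of $\Delta E^{-}$ in the statement; here one uses that the relevant difference-quotient kernel is symmetric under $i \leftrightarrow j$ so that the two terms of the coefficient above merge. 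Uniqueness of the first-column-vanishing solution is inherited directly from Theorem~\ref{thm:comm}.

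Next, to get the entry-wise formula, the plan is to fix $u > r$ and extract the $(u,r)$-entry from the coordinate-free expression for $M$. For $u > r$ one has $E^{-}(x,y)_{u,r} = p_{u-1}(x) p_{r-1}(y)$, so the $(u,r)$-entry is a double sum of $q_i^2 q_j^2 p_{k-1}(\lambda_i) p_{\ell-1}(\lambda_j)$ against a difference quotient of $p_{r-1}$. The key observation is that the inner $j$-sum, $\sum_j q_j^2 p_{\ell-1}(\lambda_j)\,\frac{p_{r-1}(x) - p_{r-1}(\lambda_j)}{x-\lambda_j}$ evaluated at $x = \lambda_i$, is by the definition \eqref{eq:minor} precisely the minor polynomial $p^{(\ell-1)}_{r-1}(\lambda_i)$. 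This collapses the $j$-sum and leaves $M_{u,r} = -\tfrac12 \sum_i q_i^2 p_{u-1}(\lambda_i) p_{k-1}(\lambda_i) p^{(\ell-1)}_{r-1}(\lambda_i)$. The same substitution into the error formula, with the entries \eqref{eq:B} of $B$ in place of those of $E^{-}$, produces the stated formula for $G^{k,\ell}$.

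For the degenerate cases, when $k = \ell$ or $k = \ell+1$ the matrix $W^{k,\ell}$ is already tridiagonal, so Corollary~\ref{cor:orth} forces the Theorem~\ref{thm:comm} solution to vanish, giving $M^{k,\ell} = 0$, and the claimed value of $G^{k,\ell} = [A,M^{k,\ell}] - W^{k,\ell}$ then follows by definition. The support statements I would argue as follows: $p^{(\ell-1)}_{r-1}$ vanishes identically for $r \le \ell$ and otherwise has degree $r-\ell-1$ by the recurrence discussion after \eqref{eq:minor}, so $p_{k-1} p^{(\ell-1)}_{r-1}$ has degree $k+r-\ell-2$, and orthogonality of $p_{u-1}$ to all lower-degree polynomials kills $M_{u,r}$ whenever $u-1 > k+r-\ell-2$, i.e.\ $u-r \ge k-\ell$; the complementary vanishing for $u+r \le k+\ell$ comes instead from the banded ``lattice path'' structure of first-column-vanishing solutions of $[A,M]-W^{k,\ell}\in\TS$, since the single off-diagonal band of $W^{k,\ell}$ can only propagate via the recursion \eqref{eq:AM} into entries with $u+r > k+\ell$.

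The step I expect to be the main obstacle is the first one: matching the $M_1 + M_2$ decomposition of Theorem~\ref{thm:comm} with the single symmetric double sum in the statement, which requires carefully tracking the factor $\tfrac12$ in that theorem, the doubling from the two terms of $(O^t W^{k,\ell} O)_{i,j}$, the $\tfrac12$ in the diagonal of $E$, and the relabeling $i \leftrightarrow j$ — together with checking that the bottom rows ($k = n$) cause no trouble, which is where one uses that the $\lambda_i$ are genuine eigenvalues of $A$ so that the boundary defects of $\tilde B$ and $E^{-}$ drop out and one may work with $B$, $E^{-}$ directly, as noted just before \eqref{eq:B}.
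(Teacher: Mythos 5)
Your proposal follows essentially the same route as the paper: specialize Theorem~\ref{thm:comm} to $W = W^{k,\ell}$, recognize the collapsed inner sum as the minor polynomial $p^{(\ell-1)}_{r-1}$ via \eqref{eq:minor}, handle the tridiagonal cases $k=\ell,\ell+1$ by the uniqueness/Corollary~\ref{cor:orth} argument, and get the support claims from orthogonality of $p_{u-1}$ (resp.\ $p_{k-1}$) to lower-degree polynomials. The one step you gloss over is the very one you flag as the main obstacle: the passage from $M_1+M_2$ to the single asymmetric kernel $p_{k-1}(\lambda_i)p_{\ell-1}(\lambda_j)\,\Delta_{\lambda_j}^{\lambda_i}E^{-}(\lambda_i,\lambda_j)$ is \emph{not} just a relabeling of the two terms of $(O^tW^{k,\ell}O)_{i,j}$. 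The matrix $H_0^{-}(\lambda_i,\lambda_j)$ contributes, in entry $(u,r)$ with $u>r$, the symmetrized factor $(p_{u-1}(\lambda_i)+p_{u-1}(\lambda_j))\Delta^{\lambda_i}_{\lambda_j}p_{r-1}$, so after collapsing the inner sums one obtains \emph{two} single sums,
\[
-\tfrac{1}{2}\sum_{i}q_i^2\,p_{k-1}(\lambda_i)p_{u-1}(\lambda_i)p^{(\ell-1)}_{r-1}(\lambda_i)
\;-\;\tfrac{1}{2}\sum_{j}q_j^2\,p_{\ell-1}(\lambda_j)p_{u-1}(\lambda_j)p^{(k-1)}_{r-1}(\lambda_j),
\]
and the second sum does not disappear by symmetry; it vanishes because $p_{\ell-1}\,p^{(k-1)}_{r-1}$ has degree at most $r-1 < u-1$ (using $k>\ell$ and $u>r$), hence is orthogonal to $p_{u-1}$. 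This is exactly the same type of degree count you already use for the support claims, so it is a small omission rather than a wrong turn, but without it the claimed entrywise formula is not yet established. Your lattice-path justification of the vanishing for $u+r\le k+\ell$ also differs cosmetically from the paper, which instead uses orthogonality of $p_{k-1}$ to the product $p_{u-1}p^{(\ell-1)}_{r-1}$ of degree $u+r-\ell-2$; both are fine.
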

\begin{proof}
  We expand the $(u,r),$ entry of $M=M_1+M_2$ for $u\geq r$ from Theorem~\ref{thm:comm}, which gives
  \[
    M
    =
    -\frac{1}{2}\sum_{i,j}
    p_{k-1}(\lambda_i)
    p_{\ell-1}(\lambda_j)
    q_i^2 q_j^2
    \left( p_{u-1}(\lambda_i) + p_{u-1}(\lambda_j) \right)
    \left\{ 
      \frac{p_{r-1}(\lambda_i) - p_{r-1}(\lambda_j)}{\lambda_i-\lambda_j}
    \right\},
  \]
  where the $i=j$ terms are defined by continuity.
  In terms of the minor polynomials, this can be written as
  \[
    \begin{aligned}
    M=
    &-\frac{1}{2}\sum_{i}
    q_i^2
    p_{k-1}(\lambda_i) 
    p_{u-1}(\lambda_i)
    p^{(\ell-1)}_{r-1}(\lambda_i) \\
    &
    -\frac{1}{2}\sum_{j}
    q_j^2
    p_{\ell-1}(\lambda_j) 
    p_{u-1}(\lambda_j)
    p^{(k-1)}_{r-1}(\lambda_j).
    \end{aligned}
  \]
  As $k > \ell$ and $u > r,$ the second sum vanishes in all cases as $p_{\ell-1} p^{(k-1)}_{r-1}$ has degree at most $r-1 < u - 1.$
  To see the other vanishing conditions, if $u$ is too large, then $p_{u-1}$ is orthogonal to all polynomials of degree strictly smaller degree.  On the other hand, if $u+r$ is too small, then by the orthogonality of $p_{k-1}$ to lower degree polynomials, the first sum vanishes.  
\end{proof}

\begin{remark}
  It is also possible to express the matrix $G^{k,\ell}$ in terms on minor polynomials.
  \[
    G^{k,\ell}_{u,r}
    =
    \frac{1}{2}\sum_{i=1}^n
    q_i^2
    p_{k-1}(\lambda_i)
    \begin{cases}
    a_{r-1} p_{r-2}(\lambda_i)p_{r-1}^{(\ell-1)}(\lambda_i) - a_r p_{r-1}(\lambda_i) p_{r}^{(\ell-1)}(\lambda_i)&  \\
    +a_{r-1} p_{r-1}(\lambda_i)p_{r-2}^{(\ell-1)}(\lambda_i) - a_r p_{r}(\lambda_i) p_{r-1}^{(\ell-1)}(\lambda_i),  & \text{ if } u = r  < n, \\
    a_r (p_{r-1}(\lambda_i)p_{r-1}^{(\ell-1)}(\lambda_i) - p_r(\lambda_i)p_r^{(\ell-1)}(\lambda_i)), & \text{ if } u = r + 1 < n,\\
    0, & \text{ otherwise.}
  \end{cases}
\]
\label{rem:Gkl}
This characterization gives the result in Theorem \ref{thm:lanczos}
\end{remark}

This leads us directly to a characterization of all differentials $dM_t$ that produce tridiagonal models.

\section{The Tridiagonal Models}
\label{sec:models}

The first characterization of the tridiagonal models associated to is a direct consequence of the work in the previous section. The following theorem gives the martingale structure of the model. 

\begin{theorem}
\label{thm:martingales}
  The rotation differentials $dM_t$ of Dyson Brownian Motion $(\lambda_{i,t}, 1 \leq i \leq n)$ that produce \emph{nondegenerate} tridiagonal models are all of the form
  \[
    dM_t = \sum_{i=1}^n \left\{q_{i,t}^2 d\lambda_{i,t} F(\lambda_{i,t}) + c_i E^{-}(\lambda_{i,t},\lambda_{i,t})\right\} + dM^{(2)}_t.
  \]
  for some choice of coefficients $c_i$ and some finite variation process $dM^{(2)}_t.$  In particular, the martingale portion of $dA_t$ has the form
  \[
    \sum_{i=1}^n 
    %\left\{q_i^2 d\lambda_{i,t} \partial_yB(x,y) \vert_{x=y=\lambda_i} + c_i B(\lambda_i,\lambda_i)\right\} 
    \biggl\{
      -
      dZ_{i,t}
      \cdot q_{i,t}^2 \cdot G(\lambda_{i,t})
      +
      dX_{i,t}
      B(\lambda_{i,t},\lambda_{i,t})
    \biggr\}
  \]
  where $Z_{i,t}$ are the Brownian motions driving the Dyson Brownian motion and $X_{i,t}$ are some martingales. For the definitions of the matrices $F, E^{-}, G,$ and $B$ see the equations \textup{(\ref{eq:F}), (\ref{eq:E}), (\ref{eq:G})}, and \textup{(\ref{eq:B})} respectively
  \label{cor:trim}
\end{theorem}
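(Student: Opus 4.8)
The plan is to read off both assertions from the commutator analysis of Section~\ref{sec:commutator}, in particular Theorem~\ref{thm:comm}, applied to the driving noise of $\beta$-Dyson Brownian motion written in the eigenbasis. First, invoke Theorem~\ref{thm:dysoniso}: since $\beta\ge 1$ keeps the eigenvalues distinct, $O_t$ is a continuous semimartingale adapted to $\filt=\sigma(A_\cdot)$, and every nondegenerate tridiagonal model satisfies
\[
  dA_t = dW_t + [A_t,dM_t] + \tfrac{1}{2}[dW_t+dA_t,dM_t],\qquad dW_t = O_t\,d\Lambda_t\,O_t^{t},
\]
with $\Lambda_t$ the diagonal Dyson motion and $M_t$ antisymmetric. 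Split every differential into its martingale and finite-variation parts: the bracket $\tfrac{1}{2}[dW_t+dA_t,dM_t]$ is made of quadratic covariations and the drift of $\Lambda_t$ is finite variation, so, writing $Z_{i,t}$ for the Brownian motions driving $\Lambda_t$ and $\Xi_t:=\sqrt{2/\beta}\,O_t\,dZ_t\,O_t^{t}$ for the martingale part of $dW_t$, the martingale part of $dA_t$ equals $\Xi_t+[A_t,\nu_t]$, where $\nu_t$ is the martingale part of $dM_t$.

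Since $A_t$ stays symmetric tridiagonal, $\Xi_t+[A_t,\nu_t]\in\TS$, so $\nu_t$ solves the commutator equation of Theorem~\ref{thm:comm} with symmetric data $\Xi_t$. The decisive observation is that $\Xi_t$ is \emph{diagonal in the eigenbasis}: $O_t^{t}\Xi_t O_t = \sqrt{2/\beta}\,dZ_t$ has vanishing off-diagonal entries. Hence in the solution furnished by Theorem~\ref{thm:comm} the component $M_2$ built from the $H_0^{-}(\lambda_i,\lambda_j)$ and the off-diagonal coefficients $(O^{t}\Xi_t O)_{i,j}$ is identically zero, and the unique first-column-vanishing solution is $\sum_i (O^{t}\Xi_t O)_{i,i}\,q_{i,t}^2\,F(\lambda_{i,t})$, i.e.\ a constant multiple of $\sum_i dZ_{i,t}\,q_{i,t}^2\,F(\lambda_{i,t})$; by Theorem~\ref{thm:span} the general $\nu_t$ adds an arbitrary element of $\Span\{E^{-}(\lambda_{i,t},\lambda_{i,t}):1\le i\le n\}$. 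As $\sqrt{2/\beta}\,dZ_{i,t}$ is exactly the martingale part of $d\lambda_{i,t}$, folding the finite-variation part of $dM_t$ (and the Dyson drift of $\lambda$) into one finite-variation process $dM_t^{(2)}$ yields
\[
  dM_t = \sum_{i=1}^n\bigl\{ q_{i,t}^2\,d\lambda_{i,t}\,F(\lambda_{i,t}) + c_i\,E^{-}(\lambda_{i,t},\lambda_{i,t})\bigr\} + dM_t^{(2)},
\]
the coefficients $c_i$ recording the $E^{-}$-freedom.

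For the martingale part of $dA_t$, substitute $\nu_t$ into $\Xi_t+[A_t,\nu_t]$ and use the tridiagonal-error formula of Theorem~\ref{thm:comm}: because $O^{t}\Xi_t O$ is diagonal, the off-diagonal $B$-difference terms in that formula drop out, the $\Xi_t$ produced by $dW_t$ is exactly cancelled, and the $F$-part of $\nu_t$ leaves behind a constant multiple of $-\sum_i dZ_{i,t}\,q_{i,t}^2\,G(\lambda_{i,t})$. The $E^{-}$-freedom in $\nu_t$ contributes $\sum_i dX_{i,t}\,B(\lambda_{i,t},\lambda_{i,t})$, using the identity $[A_t,E^{-}(\lambda_{i,t},\lambda_{i,t})]=\tilde B(\lambda_{i,t},\lambda_{i,t})=B(\lambda_{i,t},\lambda_{i,t})$ from Proposition~\ref{prop:commutator} and \eqref{eq:B}, where $X_{i,t}$ is the martingale whose differential is the coefficient of $E^{-}(\lambda_{i,t},\lambda_{i,t})$ in $\nu_t$. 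Collecting terms (and absorbing the scalar $\sqrt{2/\beta}$ into the normalization of $Z$) gives the asserted expression for the martingale portion of $dA_t$.

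The work here is essentially bookkeeping; the two places to be careful are: (i) the martingale/finite-variation split, so that Theorem~\ref{thm:comm} is fed only the diagonal driving noise $\Xi_t$ while every covariation and drift term is parked in $dM_t^{(2)}$ or in the $dX_{i,t}$; and (ii) matching the sign and normalization conventions of Theorem~\ref{thm:comm}, Proposition~\ref{prop:commutator}, and the defining relations \eqref{eq:F} and \eqref{eq:G}, so that the passage from $F$ to $G$ and the identity $[A_t,E^{-}(\lambda_{i,t},\lambda_{i,t})]=B(\lambda_{i,t},\lambda_{i,t})$ carry the right signs. No new estimates, constructions, or orthogonal-polynomial identities beyond those already in Section~\ref{sec:commutator} are required.
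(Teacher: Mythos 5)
Your proposal is correct and follows exactly the route the paper intends: Theorem~\ref{thm:dysoniso} for the SDE form, the observation that the martingale part of $dW_t$ is diagonal in the eigenbasis, Theorem~\ref{thm:comm} and Theorem~\ref{thm:span} for the solution space of the commutator equation, and \eqref{eq:F} together with $[A,E^{-}(\lambda_i,\lambda_i)]=B(\lambda_i,\lambda_i)$ from Proposition~\ref{prop:commutator} to read off the martingale part of $dA_t$ (the paper leaves this as ``a direct consequence of the work in the previous section''). Only be careful that Theorem~\ref{thm:comm} is stated for $[A,M]-W\in\TS$ while you need $[A,\nu_t]+\Xi_t\in\TS$, so the $F$-term in $dM_t$ carries a minus sign (as in \eqref{eq:frozenM}); you flag this sign bookkeeping yourself, and it does not affect the argument.
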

There is a great deal of freedom in the general model. We specialize to two cases before writing down explicit finite variation processes, though this can be done for the general model. The two cases we study are the model where the spectral weights $q_i$ are frozen, and the second is where they evolve according to a finite variation process.

\subsection{The frozen spectral weight model}
\label{sec:tridiagonalfrozen}

Having the martingale portions of the SDE, we can then turn to solving for the finite variation portions of $dA_t.$  These second--order terms depend on the choices made in the martingale terms.  One possible choice is to make $dM_t$ have $0$ in the first column, which will lead to the frozen spectral weight model (where the spectral weights are constant in time).

\begin{theorem}
\label{thm:tridiagonalfrozen}
Let the $q_i$ be fixed then the tridiagonal model associated to the $\beta$-Dyson flow satisfies 
\begin{equation}
  \begin{aligned}
    dA_t &= \sum_{i=1}^n -\biggl\{\sqrt{\frac{2}{\beta}} dZ_{i,t} +  dt \cdot \biggl\{-\frac{V'(\lambda_{i,t})}{2} + \sum_{j \neq i}\frac{1}{\lambda_{i,t} - \lambda_{j,t}} \biggr\} \biggr\} \cdot q_i^2 \cdot G(\lambda_{i,t})
    + \sum_{k \geq \ell} dP_{k,\ell,t} \cdot G^{k,\ell} \\
    dP_t &= \sum_{i=1}^n -\frac{q_{i}^4}{2} \cdot [ -E^{+}(\lambda_{i,t},\lambda_{i,t}) + G(\lambda_{i,t}), F(\lambda_{i,t})] \cdot dt  .
  \end{aligned}
    \label{eq:frozen}
\end{equation}
Moreover if $\Lambda$ is a stationary process then $A$ will be stationary in the frozen spectral weight model. The definitions of $G, E^+, F$ and $G^{k,\ell}$ may be found in \textup{(\ref{eq:G}), (\ref{eq:E}), (\ref{eq:F})} and Remark \ref{rem:Gkl} respectively
\end{theorem}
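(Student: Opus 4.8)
The plan is to assemble Theorem~\ref{thm:tridiagonalfrozen} from pieces already in hand. First I would recall from Theorem~\ref{thm:dysoniso} (= Theorem~\ref{thm:tri}) that any tridiagonal model $\MP{A}=\MP{O}\MP{\Lambda}\MP{O}^t$ satisfies
\[
  dA_t = dW_t + [A_t, dM_t] + \tfrac{1}{2}[dW_t + dA_t, dM_t],\qquad dW_t = O_t d\Lambda_t O_t^t,
\]
and then impose the choice that makes the spectral weights frozen. The key observation is that freezing the $q_i$ is exactly the condition that $dM_t$ vanish in the first column: from \eqref{eq:COB} the first column of $O^t$ is $(q_i)_i$, and the rotation $dO_t^t = O_t^t(dM_t + \tfrac12(dM_t)^2)$ preserves that first column (up to the required finite-variation correction) precisely when $dM_t e_1 = 0$. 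By the uniqueness clause in Theorem~\ref{thm:comm}, the martingale part of $dM_t$ must then be the explicit $M_1$ built from $F(\lambda_i)$, i.e.
\[
  dM^{(1)}_t = \sum_{i=1}^n (O_t^t\, dW_t\, O_t)_{i,i}\, q_i^2\, F(\lambda_{i,t})
  = \sum_{i=1}^n d\lambda_{i,t}\, q_i^2\, F(\lambda_{i,t}),
\]
using that $O_t^t dW_t O_t = d\Lambda_t$ is diagonal with entries $d\lambda_{i,t}$, and the Dyson SDE \eqref{eq:dlambda} supplies $d\lambda_{i,t} = \sqrt{2/\beta}\,dZ_{i,t} - (\tfrac12 V'(\lambda_{i,t}) - \sum_{j\neq i}(\lambda_{i,t}-\lambda_{j,t})^{-1})dt$.

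Second, I would push this through the commutator. By Theorem~\ref{thm:comm} (the tridiagonal-error formula) the martingale part of $[A_t,dM^{(1)}_t] - dW_t$ equals $\sum_i d\lambda_{i,t}\, q_i^2\, G(\lambda_{i,t})$ (with the $G(\lambda_i)$ notation of \eqref{eq:G}), which already produces the first sum in the claimed $dA_t$ up to an overall sign — the sign matching being a bookkeeping check against the convention $[A,M]-W\in\TS$ versus $[A,M]+W\in\TS$. The remaining degree of freedom is that $dM_t$ may carry an extra tridiagonal-commuting finite-variation term; combined with the $\tfrac12[\,\cdot\,,dM_t]$ Itô correction and the freedom allowed by Theorem~\ref{thm:span}, this is absorbed into the $\sum_{k\geq\ell} dP_{k,\ell,t}\, G^{k,\ell}$ term, where $G^{k,\ell}$ is the tridiagonal matrix of Corollary~\ref{cor:entry}/Remark~\ref{rem:Gkl}. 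So the real content is to identify $dP_t$ explicitly, which is a pure quadratic-variation computation: $\tfrac12[dW_t + dA_t, dM^{(1)}_t]$ to leading order is $\tfrac12[dW_t + [A_t,dM^{(1)}_t], dM^{(1)}_t]$, and since $dW_t = O_t d\Lambda_t O_t^t$ while $d\Lambda_t$ has independent Brownian entries with $(d\lambda_{i,t})^2 = \tfrac{2}{\beta}dt$, one computes
\[
  \tfrac12\Big[\textstyle\sum_i q_i^2 E^{+}(\lambda_{i,t},\lambda_{i,t})\,(d\lambda_{i,t})^2 - \sum_i q_i^2 G(\lambda_{i,t})(d\lambda_{i,t})^2,\ \sum_j q_j^2 F(\lambda_{j,t})\Big]
\]
using $O_t^t dW_t O_t = d\Lambda_t$ and $O^t E^{+}(\lambda_i,\lambda_i)O = q_i^{-2}\delta_{i,i}$ to handle the $dW_t\,dM_t$ cross term. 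Collecting the diagonal covariations yields the stated $dP_t = \sum_i -\tfrac{q_i^4}{2}[-E^{+}(\lambda_{i,t},\lambda_{i,t}) + G(\lambda_{i,t}), F(\lambda_{i,t})]\,dt$; the coefficient $q_i^4/2 = \tfrac{\beta}{2}\cdot\tfrac{1}{\beta}\cdot q_i^4$ comes from $(d\lambda_{i,t})^2 = (2/\beta)dt$ against the two factors of $q_i^2$, with the $2/\beta$ absorbing into the normalization — I would track this constant carefully as it is the most error-prone point.

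Third, for the stationarity claim, I would argue as follows: if $\MP{\Lambda}$ is stationary then by Theorem~\ref{thm:dysoniso} applied in reverse, or directly from the bijection of Dumitriu--Edelman together with frozen (hence still equilibrium, by \eqref{eq:dirichlet} independence) spectral weights, the pair $(\overline\lambda_t, \overline q^2)$ is stationary in $t$, and the deterministic map $(\overline\lambda,\overline q^2)\mapsto(\overline a,\overline b)$ carries stationary input to stationary output, so $\MP{A}$ is stationary. Alternatively one can note the SDE \eqref{eq:frozen} is autonomous (its coefficients depend only on $A_t$ through $\overline\lambda_t$, which are functions of $A_t$, and the fixed constants $q_i$), so any stationary law for $\overline\lambda$ lifts. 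I expect the main obstacle to be the finite-variation computation for $dP_t$: one must correctly expand all three second-order terms $dW_t\,dM_t$, $dM_t^t\,Y_t\,dM_t$-type, and $dM_t\,dW_t$ in the right basis, use the spectral identities \eqref{eq:spec1}--\eqref{eq:spec3} to diagonalize, and verify that the off-diagonal ($i\neq j$) contributions either cancel or get reabsorbed into the $G^{k,\ell}$ span, leaving exactly the diagonal sum claimed — the sign and the $q_i^4/2$ constant being the two things most likely to go wrong.
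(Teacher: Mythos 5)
Your proposal is correct and follows essentially the same route as the paper: choose $dM_t$ with vanishing first column built from $\sum_i d\lambda_{i,t}q_i^2 F(\lambda_{i,t})$ plus the $M^{k,\ell}$ correction for the It\^o term, use $[A,F(\lambda_i)]=E^{+}(\lambda_i,\lambda_i)+G(\lambda_i)$ to produce the $G(\lambda_{i,t})$ sum, and compute $dP_t$ from the independence of the $dZ_{i,t}$. The one bookkeeping slip to repair is that $dW_t+dA_t$ in the It\^o correction equals $2dW_t+[A_t,dM_t]$ up to finite variation (the $dW_t$ inside $dA_t$ doubles the $E^{+}$ contribution), which is exactly what makes the first slot of the bracket come out as $E^{+}-G$ as in your displayed formula rather than $-G$ alone.
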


\begin{corollary}
If we take $A_0$ to have the same distribution as the tridiagonal model in (\ref{eq:tridiagonal}) and $\Lambda$ satisfies the $\beta$-Dyson Brownian motion flow with $V(x)= 2x^2$ then $A_t$ will have the same distribution as (\ref{eq:tridiagonal}).
\end{corollary}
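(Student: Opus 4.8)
The plan is to deduce the corollary directly from the ``moreover'' clause of Theorem~\ref{thm:tridiagonalfrozen} together with the Dumitriu--Edelman bijection; the only real work is a bookkeeping check that the initial law (\ref{eq:tridiagonal}) is a fixed point of the frozen flow.

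First I would recall that, under the Dumitriu--Edelman map $(\underline a,\underline b)\leftrightarrow(\underline\lambda,\underline q^2)$, the matrix (\ref{eq:tridiagonal}) with $a_i\sim\chi_{\beta(n-i)/4}$ and $b_i\sim\mathcal{N}(0,2)$ has eigenvalues $\underline\lambda$ distributed according to the $\beta$-ensemble $\rho_N^\beta$ with potential $V(x)=2x^2$, and independently $\underline q^2\sim\operatorname{Dirichlet}(\tfrac\beta2,\dots,\tfrac\beta2)$; this is exactly the Proposition and the discussion around (\ref{eq:betaensemble})--(\ref{eq:dirichlet}). So taking $A_0$ with the law (\ref{eq:tridiagonal}) is the same as running the frozen spectral weight model of Theorem~\ref{thm:tridiagonalfrozen} (with $V(x)=2x^2$) from initial data $\underline\lambda_0\sim\rho_N^\beta$ and $\underline q^2\sim\operatorname{Dirichlet}$, these being independent; I would also take the Brownian motions $Z_{\cdot,t}$ driving the eigenvalue flow to be independent of $A_0$, hence of $\underline q^2$.

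Second, I would observe that the eigenvalue dynamics (\ref{eq:dlambda}) are autonomous in $\underline\lambda$ and have $\rho_N^\beta$ as their stationary law (stated in the introduction; for $\beta\ge 1$ the particles never collide, so the flow is globally defined and the configuration stays simple). Hence $\underline\lambda_t\sim\rho_N^\beta$ for every $t$, and since $\underline\lambda_t$ is a measurable function of $\underline\lambda_0$ and $(Z_{\cdot,s})_{s\le t}$ it remains independent of $\underline q^2$; because the spectral weights are frozen, $\underline q_t^2=\underline q^2\sim\operatorname{Dirichlet}$ for all $t$. Thus $(\underline\lambda_t,\underline q_t^2)$ has the same product law as $(\underline\lambda_0,\underline q_0^2)$, and applying the Dumitriu--Edelman bijection in reverse gives that $\MP{A}{t}$ has the same distribution as $\MP{A}{0}$, i.e.\ as (\ref{eq:tridiagonal}). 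Equivalently, $\underline\lambda$ being stationary is precisely the hypothesis of the ``moreover'' clause of Theorem~\ref{thm:tridiagonalfrozen}, which then delivers stationarity of $\MP{A}$.

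The one point requiring care --- and the ``main obstacle,'' such as it is --- is the normalization bookkeeping: one must confirm that the $\chi$ and Gaussian parameters in (\ref{eq:tridiagonal}) really do produce $\rho_N^\beta$ for the specific $V(x)=2x^2$ appearing in Theorem~\ref{thm:tridiagonalfrozen} (note the factor-of-$4$ convention flagged after (\ref{eq:tridiagonal})), and that (\ref{eq:dlambda}) with this same $V$ and the $\sqrt{2/\beta}$ noise normalization is indeed stationary for $\rho_N^\beta$. Both facts are in Dumitriu--Edelman and in the introductory discussion, so no new computation is needed, but the constants must be tracked consistently. One could instead prove the corollary by hand --- inserting $\rho_N^\beta$ into the generator of the SDE (\ref{eq:frozen}) and checking that it is annihilated --- but this is strictly harder and redundant given Theorem~\ref{thm:tridiagonalfrozen}.
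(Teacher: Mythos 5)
Your argument is correct and is exactly the route the paper intends: the corollary is stated as an immediate consequence of the ``moreover'' (stationarity) clause of Theorem~\ref{thm:tridiagonalfrozen}, combined with the Dumitriu--Edelman facts that the eigenvalues of (\ref{eq:tridiagonal}) are distributed as the stationary law $\rho_N^\beta$ for $V(x)=2x^2$ and the weights are an independent Dirichlet vector, which the frozen model preserves identically. Your additional care about the factor-of-$4$ normalization and the independence of $\underline{q}^2$ from the driving noise is exactly the right bookkeeping; no gap.
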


Recall that $dA_t$ satisfies
\[
  dA_t = dW_t + [A_t, dM_t] + \frac{1}{2}[2dW_t + [A_t,dM_t], dM_t]
\]
where $dW_t = \sum_{i=1}^n d\lambda_{i,t} q_i^2 E^{+}(\lambda_i,\lambda_i).$  
Define
\begin{equation}
  \label{eq:frozenM}
  \begin{aligned}
    dM^{(1)}_t &= \sum_{i=1}^n -\sqrt{\frac{2}{\beta}}dZ_{i,t} \cdot q_i^2 \cdot F(\lambda_{i,t}), \\
    dM^{(2)}_t &= \sum_{i=1}^n -dt \cdot q_i^2 \cdot F(\lambda_i)\cdot \biggl\{-\frac{V'(\lambda_{i,t})}{2} + \sum_{j \neq i}\frac{1}{\lambda_{i,t} - \lambda_{j,t}} \biggr\}, \\
    dM^{(3)}_t &= \sum_{k \geq \ell} -\frac{1}{2}[2dW_t+[A_t,dM^{(1)}_t], dM^{(1)}_t]_{k,\ell} \cdot M^{k,\ell},
  \end{aligned}
\end{equation}
and let $dM_t$ be the sum of these three components.  Here $M^{k,\ell}$ is the matrix defined in Corollary~\ref{cor:entry}.  Observe that the first column of $dM_t$ is $0.$  Recalling the definitions of $F$ and $M^{k,\ell},$ we have that
\[
  \begin{aligned}
  &[A_t, dM^{(1)}_t+ dM^{(2)}_t] \in \sum_{i=1}^n -d\lambda_{i,t} \cdot q_i^2 \cdot E^{+}(\lambda_i,\lambda_i) + \TS \\
  &[A_t, dM^{(3)}_t] \in -\frac{1}{2}[ 2dW_t + [A_t,dM^{(1)}_t],dM^{(1)}_t] + \TS \\ 
  &[A_t, dM^{(3)}_t] \in -\frac{1}{2}[ 2dW_t + [A_t,dM_t],dM_t] + \TS 
  \end{aligned}
\]
The equivalence of the last two terms comes from the fact that only products of two stochastic differentials survive.
Hence, this choice of $dM_t$ produces a tridiagonal diffusion, moreover, the produces the tridiagonal model in Theorem \ref{thm:tridiagonalfrozen}. To arrive at the expression we have for $dP_t,$ we have used the independence of the Brownian motions $dZ_{i,t}.$

\subsubsection*{Computation of $dP_t$}

This computation reduces to that of the commutators $[G(\lambda_i),F(\lambda_i)]$ and $[E^{+}(\lambda_i,\lambda_i),F(\lambda_i)]$ for an eigenvalue $\lambda_i.$  We further identify the dominant terms in this expansion, at least for a sufficiently small principal submatrix.
Specifically we show that
  \[
    \sum_{k \geq \ell} dP_{k,\ell,t} \cdot G^{k,\ell}
    =\sum_{i=1}^n \frac{q_i^2}{4} \partial_{xy} B(x,y) \vert_{x=y=\lambda_{i,t}}
    + dR_{t},
  \]
  where $dR_t$ is a lower order term, at least for sufficiently small windows of the upper left corner.  In the asymptotic regime considered in Section~\ref{sec:asymptotics}, the magnitude of the displayed terms in principal submatrix of size $M$ will be $M^2/n.$  In contrast, the $dR_t$ terms will be $M^4/n^2.$

We begin with $[G(\lambda_i), F(\lambda_i)].$  For notational simplicity, we suppress $\lambda_i$ in what follows.  All polynomials are evaluated at $\lambda_i.$
As $G$ is tridiagonal, we recall for convenience the commutator equation \eqref{eq:AM}
\begin{equation*}
[A,M]_{k,\ell} = a_{k-1} m_{k-1,\ell} + b_k m_{k,\ell} +a_k m_{k+1,\ell} - (a_{\ell-1} m_{k,\ell-1} + b_\ell m_{k,\ell}+ a_\ell m_{k,\ell+1}).
\end{equation*}
For $k > \ell +1,$ we get
\begin{align*}
  [G,F]_{k,\ell}
  =
%  a_{k-1} m_{k-1,\ell}
  &-a_{k-1} (p_{k-2}p_{k-2}' - p_{k-1}p_{k-1}')
  p_{k-2}p'_{\ell-1} \\
%  + b_k m_{k,\ell} 
  &-\biggl\{a_{k-1} p_{k-2}p_{k-1}' - a_k p_{k-1} p_{k}'
  +a_{k-1} p_{k-1}p_{k-2}' - a_k p_{k} p_{k-1}'\biggr\}
  p_{k-1}p'_{\ell-1} \\
%  +a_k m_{k+1,\ell} 
  &-a_k(p_{k-1}p_{k-1}' - p_k p_k')
  p_{k}p'_{\ell-1} \\
%  -a_{\ell-1} m_{k,\ell-1}
  &+a_{\ell-1} (p_{\ell-2}p_{\ell-2}' - p_{\ell-1}p_{\ell-1}')
  p_{k-1}p'_{\ell-2} \\
%  -b_\ell m_{k,\ell}
  &+\biggl\{a_{\ell-1} p_{\ell-2}p_{\ell-1}' - a_\ell p_{\ell-1} p_{\ell}'
  +a_{\ell-1} p_{\ell-1}p_{\ell-2}' - a_\ell p_{\ell} p_{\ell-1}'\biggr\}
  p_{k-1}p'_{\ell-1} \\
%  -a_\ell m_{k,\ell+1}).
  &+a_\ell(p_{\ell-1}p_{\ell-1}' - p_\ell p_\ell')
  p_{k-1}p'_{\ell}. 
\end{align*}
There is cancellation in this formula, which results in 
\begin{equation}
  \label{eq:GF}
  \begin{aligned}
  ~[G,F]_{k,\ell}
  = 
  \mathcal{H}_{k,\ell}
  :=
  &-p_{\ell-1}'a_{k-1}p_{k-2}'(p_{k-2}^2 + p_{k-1}^2) \\
  &+p_{\ell-1}'a_{k}p_{k}'(p_{k-1}^2 + p_{k}^2) \\
  &+p_{k-1}a_{\ell-1}p_{\ell-2}((p_{\ell-2}')^2 + (p_{\ell-1}')^2) \\
  &-p_{k-1}a_{\ell}p_{\ell}((p_{\ell-1}')^2 + (p_{\ell}')^2). 
\end{aligned}
\end{equation}
Here, the formula for $[G,F]$ holds for $k > \ell+1.$  We let $\mathcal{H}_{k,\ell}$ be given by this formula for all $k \geq \ell.$ 
From the symmetry of $G$ and the antisymmetry of $F,$ the commutator $[G,F]$ is symmetric, which allows its entries above the diagonal to be completed.  
Hence we extend the definition of $\mathcal{H}$ to be $0$ on the diagonal and symmetric.
On the tridiagonal, there are correction terms coming from the behavior of $F$ near the diagonal.  

The correction to \eqref{eq:GF} when $k = \ell+1$ stems from the $0$ diagonal of $F.$  This $0$-diagonal implies that \[
  G_{\ell+1,\ell} F_{\ell,\ell} - F_{\ell+1,\ell+1} G_{\ell+1,\ell} = 0.
\]
Hence, the commutator for $k=\ell+1$ is
\begin{equation}
  \label{eq:GFe1}
  \begin{aligned}
  ~[G,F]_{\ell+1,\ell}
  -\mathcal{H}_{\ell+1,\ell}
  &=
  -G_{\ell+1,\ell} p_{\ell-1} p_{\ell-1}' + p_{\ell}p_{\ell}' G_{\ell+1,\ell} \\
  &=
  -a_\ell( p_{\ell-1} p_{\ell-1}' - p_{\ell}p_{\ell}')^2. 
  \end{aligned}
\end{equation}
As for the diagonal, we get
\begin{equation}
  \label{eq:GFe2}
  \begin{aligned}
  ~[G,F]_{\ell,\ell}
  =&
  2G_{\ell,\ell+1} F_{\ell+1,\ell}
  -2G_{\ell-1,\ell} F_{\ell,\ell-1} \\
  =&-p_{\ell-1}'a_{\ell-1}p_{\ell-2}'(0 + 2p_{\ell-1}^2) \\
  &+p_{\ell-1}'a_{\ell}p_{\ell}'(0 + 2p_{\ell}^2) \\
  &+p_{\ell-1}a_{\ell-1}p_{\ell-2}(2(p_{\ell-2}')^2 + 0) \\
  &-p_{\ell-1}a_{\ell}p_{\ell}(2(p_{\ell-1}')^2 + 0).  \\
  =&\mathcal{H}_{\ell,\ell} \\
  &-p_{\ell-1}'a_{\ell-1}p_{\ell-2}'(-p_{\ell-2}^2 + p_{\ell-1}^2) \\
  &+p_{\ell-1}'a_{\ell}p_{\ell}'(-p_{\ell-1}^2 + p_{\ell}^2) \\
  &+p_{\ell-1}a_{\ell-1}p_{\ell-2}((p_{\ell-2}')^2 - (p_{\ell-1}')^2) \\
  &-p_{\ell-1}a_{\ell}p_{\ell}((p_{\ell-1}')^2 - (p_{\ell}')^2).  \\
  \end{aligned}
\end{equation}

Turning to $[E^{+},F],$ once again evaluated at an eigenvalue $\lambda_i$ which we suppress, for any $k \geq \ell$
\begin{align*}
  [E^{+},F]_{k,\ell}
  =&
  \sum_{m=1}^n E^{+}_{k,m} F_{m,\ell} - F_{k,m} E^{+}_{m,\ell} \\
  =&
  \sum_{m=\ell+1}^n -p_{k-1} p_{m-1} p_{m-1} p_{\ell-1}' 
  -\sum_{m=k+1}^n p_{k-1}' p_{m-1} p_{m-1} p_{\ell-1} \\
  +&\sum_{m=1}^{\ell-1} p_{k-1} p_{m-1} p_{m-1}' p_{\ell-1}
  +\sum_{m=1}^{k-1} p_{k-1} p_{m-1}' p_{m-1} p_{\ell-1}. \\
\end{align*}
Applying the Christoffel--Darboux identities to these sums, we arrive at
\begin{align*}
  [E^{+},F]_{k,\ell}
  =
  &-\{p_{k-1}p_{\ell-1}'+p_{k-1}'p_{\ell-1}\}q_i^{-2} \\
  &-p_{k-1}p_{\ell-1}'\left\{ - a_\ell p_\ell'p_{\ell-1}+a_\ell p_\ell p_{\ell-1}' \right\} \\
  &-p_{k-1}'p_{\ell-1}\left\{ - a_k p_k'p_{k-1}+ a_k p_k p_{k-1}' \right\} \\
  &+p_{k-1}p_{\ell-1}a_{\ell-1} \left\{ p_{\ell-1}''p_{\ell-2} - p_{\ell-1}p_{\ell-2}'' \right\} \\
  &+p_{k-1}p_{\ell-1}a_{k-1} \left\{ p_{k-1}''p_{k-2} - p_{k-1}p_{k-2}'' \right\}.
\end{align*}
We set $\mathcal{E}_{k,\ell}$ to be the sum of the final four lines from this equation.

\subsubsection*{Some simplifications of the FV terms}

There is some cancellation in the finite variation expressions which assists in estimating its true magnitude when passing to asymptotics. Much of the simplification is possible on account of the observation that for $k < \ell,$
\[
    \frac{1}{2}\sum_{i,j=1}^n
    q_i^2
    q_j^2
    p_{k-1}(\lambda_i)
    p_{\ell-1}(\lambda_j)
    \Delta_{\lambda_j}^{\lambda_i} B(\lambda_i,\lambda_j)
    =0.
\]
The left hand side of this expression gives $-G^{k,\ell}$ for $k \geq \ell.$  Hence if we extend the definition of $G^{k,\ell}$ to be $0$ for $k < \ell,$ we can express the finite variation terms as four sums
\begin{equation}
  \sum_{k \geq \ell} dP_{k,\ell,t} \cdot G^{k,\ell}
  = 
  \begin{aligned}
  &\sum_{i,k,\ell=1}^n -\frac{q_i^2}{2} \cdot (p_{k-1}(\lambda_i)p_{\ell-1}'(\lambda_i)+p_{k-1}(\lambda_i)'p_{\ell-1}(\lambda_i) )\cdot G^{k,\ell} \cdot dt  &\bigg\} =: dS_t \\
  +&\sum_{i,k,\ell=1}^n -\frac{q_i^4}{2} \cdot \mathcal{H}_{k,\ell}(\lambda_i)\cdot G^{k,\ell} \cdot dt \qquad &\bigg\} =: dR^1_t \\
  +&\sum_{i,k,\ell=1}^n -\frac{q_i^4}{2} \cdot ( [G(\lambda_i),F(\lambda_i)]_{k,\ell} - \mathcal{H}_{k,\ell}(\lambda_i))\cdot G^{k,\ell} \cdot dt \qquad &\bigg\} =: dR^2_t \\
  +&\sum_{i,k,\ell=1}^n \frac{q_i^4}{2} \cdot \mathcal{E}_{k,\ell}(\lambda_i)\cdot G^{k,\ell} \cdot dt.  &\bigg\} =: dR^3_t \\
\end{aligned}
  \label{eq:FVterms}
\end{equation}

Define for any eigenvalues $\lambda_i$ and $\lambda_u$
\[
  \mathfrak{D}(\lambda_u,\lambda_i)
  =\sum_{\ell=1}^n p_{\ell-1}(\lambda_u) p_{\ell-1}'(\lambda_i).
\]
Observe that for any $1 \leq r \leq n,$
\[
  \sum_{u=1}^n 
  \mathfrak{D}(\lambda_u,\lambda_i)
  p_{r-1}(\lambda_u) q_u^2 = p_{r-1}'(\lambda_i),
\]
and hence we have that for any polynomial $p$ of degree less than $n,$
\begin{equation}
  \label{eq:Did}
  \sum_{u=1}^n 
  \mathfrak{D}(\lambda_u,\lambda_i)
  p(\lambda_u) q_u^2 = p'(\lambda_i).
\end{equation}

\subsubsection*{Simplifying $dS_t:$}

We begin this case by observing
that since $G^{k,\ell} = 0$ for $k < \ell,$
\[
\sum_{i,k,\ell=1}^n -\frac{q_i^2}{2} \cdot (p_{k-1}(\lambda_i)p_{\ell-1}'(\lambda_i))\cdot G^{k,\ell} = 0,
\]
from the orthogonality of $p_{k-1}$ to lower degree polynomials.  Hence we only need consider the other term.  Expanding the definition of $G^{k,\ell},$
\begin{align*}
  dS_t
  &=
  \sum_{i,k,\ell=1}^n -\frac{q_i^2}{2} \cdot (p_{k-1}(\lambda_i)'p_{\ell-1}(\lambda_i) )\cdot G^{k,\ell} \cdot dt \\
  &=
  \sum_{i,k,\ell,u,j=1}^n 
  \frac{q_i^2q_u^2q_j^2}{4} \cdot 
  p_{k-1}(\lambda_i)'p_{\ell-1}(\lambda_i) \cdot 
  p_{k-1}(\lambda_u)
  p_{\ell-1}(\lambda_j)
  \Delta_{\lambda_j}^{\lambda_u} B(\lambda_u,\lambda_j)
  \cdot dt \\
  &=
  \sum_{i,k,u=1}^n 
  \frac{q_i^2q_u^2}{4} \cdot 
  p_{k-1}(\lambda_i)' \cdot 
  p_{k-1}(\lambda_u)
  \Delta_{\lambda_i}^{\lambda_u} B(\lambda_u,\lambda_i)\cdot dt \\
  &=
  \sum_{i,u=1}^n 
  \frac{q_i^2q_u^2}{4} \cdot 
  \mathfrak{D}(\lambda_u,\lambda_i)
  \Delta_{\lambda_i}^{\lambda_u} B(\lambda_u,\lambda_i)\cdot dt. 
\end{align*}
In the third equality, we have used the orthogonality of the polynomials in $\ell.$ In the fourth, we have used the definition of $\mathfrak{D}.$ Hence, we conclude
\begin{lemma}
  For $k,\ell$ satisfying $2\max(k,\ell) \leq n+1,$ we have
  \[
    dQ_{k,\ell,t} = 
    \sum_{i=1}^n 
    \frac{q_i^2}{4} \partial_{xy}B_{k,\ell}(x,y) \vert_{x=y=\lambda_{i,t}}
    .
  \]
  \label{lem:R3}
\end{lemma}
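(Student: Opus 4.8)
The computation displayed just before the statement has already reduced the dominant finite--variation term to
\[
  dS_t = \sum_{i,u=1}^n \frac{q_i^2 q_u^2}{4}\,\mathfrak{D}(\lambda_u,\lambda_i)\,\Delta^{\lambda_u}_{\lambda_i}B(\lambda_u,\lambda_i)\,dt,
\]
and $dQ_{k,\ell,t}$ is the $(k,\ell)$ entry of $dS_t$, so all that remains is to carry out the sum over $u$ entrywise. Fix $(k,\ell)$ with $2\max(k,\ell)\le n+1$ and put $g_{k,\ell}(x):=B_{k,\ell}(x,x)$; the $(k,\ell)$ entry of $\Delta^{\lambda_u}_{\lambda_i}B(\lambda_u,\lambda_i)$ is the divided difference $\bigl(g_{k,\ell}(\lambda_i)-g_{k,\ell}(\lambda_u)\bigr)/(\lambda_i-\lambda_u)$, a polynomial in $\lambda_u$ of degree $\deg g_{k,\ell}-1$.

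I would first pin down this degree from \eqref{eq:B}. For $k=\ell$ the entry $B_{\ell,\ell}(x,y)$ is a combination of $p_{\ell-2}(x)p_{\ell-1}(y)$, $p_{\ell-1}(x)p_\ell(y)$ and their transposes, so $\deg g_{\ell,\ell}=2\ell-1$; for $k=\ell+1$ we have $B_{\ell+1,\ell}(x,y)=-a_\ell\bigl(p_{\ell-1}(x)p_{\ell-1}(y)-p_\ell(x)p_\ell(y)\bigr)$ and $\deg g_{\ell+1,\ell}=2\ell$. In both cases the hypothesis $2\max(k,\ell)\le n+1$ is exactly what makes $\deg g_{k,\ell}-1\le n-1$, and it also forces $k<n$ and $\ell<n$, so only the generic branch of \eqref{eq:B} is relevant (the boundary entries $k=n$ or $\ell\ge n-1$, with their extra $p_n$ terms, never occur). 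Hence the reproducing identity \eqref{eq:Did} applies to this polynomial in $\lambda_u$, and summing over $u$ converts $\sum_u q_u^2\,\mathfrak{D}(\lambda_u,\lambda_i)(\cdot)$ into the derivative at $\lambda_i$ of the divided difference, which equals $\tfrac12 g_{k,\ell}''(\lambda_i)$ (the derivative of $z\mapsto(g(\lambda_i)-g(z))/(\lambda_i-z)$ at $z=\lambda_i$ is $\tfrac12 g''(\lambda_i)$, exactly as in \eqref{eq:secondderiv}). Thus $dQ_{k,\ell,t}=\sum_{i}\tfrac{q_i^2}{8}\,g_{k,\ell}''(\lambda_{i,t})\,dt$.

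It then remains to replace $g_{k,\ell}''$ by the claimed mixed partial. Every entry of \eqref{eq:B} is manifestly symmetric in $x$ and $y$, so $B_{k,\ell}(x,y)=B_{k,\ell}(y,x)$ and in particular $\partial_x^2 B_{k,\ell}(x,x)=\partial_y^2 B_{k,\ell}(x,x)$; the chain rule then gives $g_{k,\ell}''(x)=2\,\partial_x^2 B_{k,\ell}(x,x)+2\,\partial_{xy}B_{k,\ell}(x,x)$. The last step is to show that the $\partial_x^2$ contribution vanishes under the average $\sum_i q_i^2(\cdot)$. Differentiating \eqref{eq:B} twice in $x$ turns each factor $p_a(x)$ into $p_a''(x)$, of degree $a-2$, leaving the companion factor $p_b(y)$; running through the four monomials in the $k=\ell$ branch and the two in the $k=\ell+1$ branch shows $b>a-2$ in every single term, and $b\le\ell\le n-1$ in our range, so on the diagonal each term is a product $p_a''(\lambda_i)p_b(\lambda_i)$ with $\deg p_a''<\deg p_b$, and $\sum_i q_i^2 p_a''(\lambda_i)p_b(\lambda_i)=0$ by orthogonality of $p_b$ to lower--degree polynomials. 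Combining the last three displays yields $dQ_{k,\ell,t}=\sum_i\frac{q_i^2}{4}\,\partial_{xy}B_{k,\ell}(x,y)\big|_{x=y=\lambda_{i,t}}\,dt$, which is the assertion.

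The conceptual content is light; what needs care is the two case analyses against \eqref{eq:B}: (i) checking that $\deg g_{k,\ell}-1\le n-1$ holds precisely on $2\max(k,\ell)\le n+1$, so that \eqref{eq:Did} is legitimately applicable and no boundary entry of \eqref{eq:B} is touched, and (ii) verifying the degree gap $\deg p_a''<\deg p_b$ in each monomial so that the spurious $\partial_x^2 B$ term integrates to $0$. The remaining ingredients — the product rule for $g''$ and the limiting identity for the derivative of a divided difference — are routine, and are the same manipulations used around \eqref{eq:secondderiv}; the sign and constant bookkeeping is insensitive to the precise reading of the symbol $\Delta^{\lambda_u}_{\lambda_i}B(\lambda_u,\lambda_i)$, since every reasonable interpretation leaves behind a $\partial_x^2 B$ (or $\partial_y^2 B$) remainder that is killed by the same orthogonality argument.
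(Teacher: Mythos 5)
Your argument is correct and is essentially the paper's own proof: apply the reproducing identity \eqref{eq:Did} to the divided difference in the $\lambda_u$ variable, identify the resulting derivative on the diagonal with second derivatives of $B_{k,\ell}$, and kill the pure second-derivative remainder via orthogonality of the $p_j$ to lower-degree polynomials (with the hypothesis $2\max(k,\ell)\leq n+1$ entering only through the degree bounds, exactly as you check). The one deviation is that you read $\Delta^{\lambda_u}_{\lambda_i}B(\lambda_u,\lambda_i)$ as the divided difference of $x\mapsto B_{k,\ell}(x,x)$ rather than as the partial difference quotient $(B(y,\lambda_i)-B(y,y))/(\lambda_i-y)$ used in the paper, which changes the intermediate remainder from $\tfrac{1}{2}\partial_{yy}B$ to $\tfrac{1}{2}(\partial_{xx}+\partial_{yy})B$; as you correctly observe, this is immaterial since both remainders vanish under the average $\sum_i q_i^2(\cdot)$.
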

\begin{proof}
  For $k,\ell$ as stated,
  \(
    \Delta_{y}^{x}B_{k,\ell}(x,y)
  \)
  are polynomials in $x$ of degree at most $n-1.$
  Applying \eqref{eq:Did} to $dS_t,$ it follows that 
  \[
    dQ_{k,\ell,t} = 
    \sum_{i=1}^n 
    \frac{q_i^2}{4} (\partial_{y} \Delta_{\lambda_i}^{y}B_{k,\ell}(y,\lambda_i))\vert_{y=\lambda_i}
  \]
  Expanding,
  \[
    \partial_{y} \Delta_{\lambda_i}^{y}B(y,\lambda_i)
    =
    \partial_y
    \frac{B(y,y) - B(y,\lambda_i)}{y-\lambda_i}.
  \]
  Hence, on setting $y=\lambda_i,$ we get
  \[
    (\partial_{y} \Delta_{\lambda_i}^{y}B(y,\lambda_i))\vert_{y=\lambda_i}
    =(\partial_{xy} + \frac{1}{2}\partial_{yy})B(x,y) \vert_{x=y=\lambda_i}.
  \]
  By orthogonality, and recalling \eqref{eq:B}, we have
  \[
    \sum_{i=1}^n 
    \frac{q_i^2}{4}
    (\partial_{yy})B(x,y) \vert_{x=y=\lambda_i} = 0,
  \]
  which completes the proof.
\end{proof}

\subsubsection*{Simplifications for quadratic constraining potential}
For $V'(t) = ct,$ some additional simplification is possible.  Specifically, we may take advantage of the identity
\begin{equation}
  \sum_{i=1}^n q_i^2 \lambda_i G(\lambda_i) =  -A
  \label{eq:AG}
\end{equation}
We prove this like follows.  By \eqref{eq:F}, for an eigenvalue $\lambda$ of $A,$
\[
    [A,F(\lambda)] = E^{+}(\lambda,\lambda) + G(\lambda).
\]
Recall that an entry of $F_{k,\ell}(\lambda)=p_{k-1}(\lambda)p_{\ell-1}'(\lambda)$ for $k > \ell.$ Hence, by orthogonality of $p_{k-1}$ to lower degree polynomials.
\[
  \sum_{i=1}^n q_i^2 \lambda_i F(\lambda_i) = 0.
\]
On the other hand, recalling \eqref{eq:spec2},
\[
  \sum_{i=1}^n E^{+}(\lambda_i,\lambda_i)q_i^2 \lambda_i = A,
\]
which completes the proof.

\subsection{Smooth spectral weight model}
\label{sec:tridiagonalfv}

A relatively tame perturbation of the frozen--spectral weight model is one in which the spectral weights are finite variation processes, which may or may not depend on the eigenvalues.  
\begin{theorem}
\label{thm:tridiagonalFV}
Suppose that $dR_{i,t},$ for $1 \leq i \leq n$ are any finite variation processes.  Define
\[
  dM^R_t = 
  dM^{F}_t
  + \sum_{i=1}^n dR_{i,t} q_{i,t} E^{-}(\lambda_{i,t},\lambda_{i,t}),
\]
where $dM^{F}_t$ is the rotation differential from \eqref{eq:frozenM}.  The associated tridiagonal model is given by 
\begin{equation}
  \begin{aligned}
    dA_t &= dA^{F}_t + \sum_{i=1}^n dR_{i,t} q_i B(\lambda_{i,t},\lambda_{i,t}).
  \end{aligned}
  \label{eq:smooth}
\end{equation}
\end{theorem}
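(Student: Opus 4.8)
The plan is to substitute the rotation differential $dM^R_t$ directly into the master equation \eqref{eq:prog} and isolate the single new term it carries relative to the frozen--weight model. Write $dM^R_t = dM^F_t + dN_t$ with $dN_t := \sum_{i=1}^n dR_{i,t}\, q_{i,t}\, E^{-}(\lambda_{i,t},\lambda_{i,t})$; since every $dR_{i,t}$ is a finite variation process, so is $dN_t$. By Theorem~\ref{thm:dysoniso} the associated diffusion obeys
\[
  dA_t = dW_t + [A_t, dM^R_t] + \tfrac12[2dW_t + [A_t, dM^R_t], dM^R_t],
\]
with the same driving term $dW_t = \sum_i d\lambda_{i,t}\, q_{i,t}^2\, E^{+}(\lambda_{i,t},\lambda_{i,t})$ used in Section~\ref{sec:tridiagonalfrozen}.

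First I would expand the commutator by linearity: $[A_t, dM^R_t] = [A_t, dM^F_t] + \sum_i dR_{i,t}\, q_{i,t}\, [A_t, E^{-}(\lambda_{i,t},\lambda_{i,t})]$. Proposition~\ref{prop:commutator}, evaluated on the diagonal $x=y=\lambda_{i,t}$ (where $(x-y)E^{+}$ drops out and, for a zero $\lambda_{i,t}$ of $p_n$, the boundary row collapses so that $\tilde B = B$), gives $[A_t, E^{-}(\lambda_{i,t},\lambda_{i,t})] = B(\lambda_{i,t},\lambda_{i,t})$ of \eqref{eq:B}, which is symmetric tridiagonal; equivalently this is the assertion that the generators $E^{-}(\lambda_i,\lambda_i)$ of the kernel space $V$ of Theorem~\ref{thm:span} commute with $A$ into $\TS$. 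Hence $[A_t, dM^R_t]$ is $[A_t, dM^F_t]$ plus the tridiagonal finite variation term $\sum_i dR_{i,t}\, q_{i,t}\, B(\lambda_{i,t},\lambda_{i,t})$.

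Next I would argue the quadratic covariation term is left unchanged by the perturbation: in $[2dW_t + [A_t, dM^R_t], dM^R_t]$ only products of two martingale differentials survive, and both $dN_t$ and $[A_t, dN_t]$ are finite variation, so replacing $dM^R_t$ by $dM^F_t$ in either slot alters nothing. Therefore
\[
  dA_t = \Bigl( dW_t + [A_t, dM^F_t] + \tfrac12[2dW_t + [A_t, dM^F_t], dM^F_t] \Bigr) + \sum_{i=1}^n dR_{i,t}\, q_{i,t}\, B(\lambda_{i,t},\lambda_{i,t}),
\]
and the bracketed expression is exactly $dA^F_t$ by Theorem~\ref{thm:tridiagonalfrozen}. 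Since $dA^F_t$ and each $B(\lambda_{i,t},\lambda_{i,t})$ are symmetric tridiagonal, so is $dA_t$, which also confirms that $dM^R_t$ genuinely produces a tridiagonal model; this is \eqref{eq:smooth}. One can additionally note that $dN_t$, being finite variation, perturbs $dO_t^t = O_t^t(dM_t + \tfrac12(dM_t)^2)$ only in its finite variation part, which is what lets the spectral weights drift under $dR_{i,t}$.

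The argument is essentially bookkeeping on top of Section~\ref{sec:commutator}. The step I would be most careful with is the identity $[A_t, E^{-}(\lambda_i,\lambda_i)] = B(\lambda_i,\lambda_i) \in \TS$: this is where Proposition~\ref{prop:commutator} and Theorem~\ref{thm:span} do the real work, and the bottom rows ($k=n$), where the generic matrix $\tilde B$ acquires extra $p_n$--terms, should be checked against the confluent form $B$ at an eigenvalue. A secondary, entirely routine point to record is that the coefficient $dR_{i,t}$ sits wholly inside $[A_t, dM_t]$ and contributes no second--order cross terms with the Dyson noise $dZ_{i,t}$ or with $dM^F_t$, which is precisely why the finite variation part of the frozen model reappears verbatim as $dA^F_t$.
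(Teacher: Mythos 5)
Your argument is correct, and it establishes \eqref{eq:smooth} by a route that is different from (and in a sense more direct than) the one the paper writes down. You substitute $dM^R_t=dM^F_t+dN_t$ into the master SDE, use linearity of the commutator together with the identity $[A_t,E^{-}(\lambda_{i,t},\lambda_{i,t})]=\tilde B(\lambda_{i,t},\lambda_{i,t})=B(\lambda_{i,t},\lambda_{i,t})\in\TS$ (which does hold: at $x=y$ the $(x-y)E^{+}$ term drops and the boundary terms of Proposition~\ref{prop:commutator} all carry a factor $p_n(\lambda_i)=0$), and observe that the finite variation perturbation $dN_t$ contributes nothing to the quadratic covariation bracket. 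That is a complete and valid derivation of the displayed formula for $dA_t$; your care about the $k=n$ rows and about the absence of cross terms between $dR_{i,t}$ and the martingale differentials is exactly where the checking needs to happen. The paper's proof, by contrast, takes the formula for $dA_t$ as implicit in the construction of Section~\ref{sec:commutator} and instead spends its effort on the complementary fact that justifies the name of the model: it computes the first row of $dO_t$ under $dM^R_t$ and shows that the spectral weights evolve by the finite variation process $dq_{i,t}=dR_{i,t}-q_{i,t}\sum_j dR_{j,t}q_{j,t}$, i.e.\ the projection of $(dR_{i,t})_i$ orthogonal to $(q_{i,t})_i$, noting also that $\sum_i q_{i,t}^2E^{-}(\lambda_{i,t},\lambda_{i,t})=0$ so one may assume this orthogonality from the start. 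Your proof nails the literal claim \eqref{eq:smooth}; the paper's proof supplies the interpretation of $dR$ as the weight drift. If you want your write-up to stand alone as a replacement, it would be worth appending that short computation of $dq_{i,t}$, since your closing remark only asserts qualitatively that the weights drift rather than identifying their evolution.
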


\begin{proof}
The eigenvector matrix $O_t$ of $A_t$ evolves (by Theorem~\ref{thm:tri}) as
\begin{equation*}
  dO_t^t = O_t^t (dM_t +\frac{1}{2} dM_t^2).
\end{equation*}
When $dM_t = dM^R_t,$ the first column of $dM_t^2$ vanishes, as $dM^F_t$ is identically $0$ in this column and $dM^R_t-dM^F_t$ is finite variation.  Hence, the first row of $dO_t$ evolves according to
\begin{align*}
  dq_{i,t} = dO_{1,i,t} = 
  \sum_{\ell=1}^n O_{\ell,i} dM^R_{\ell,1,t} 
  &=
  \sum_{\ell=2}^n q_{i,t} p_{\ell-1}(\lambda_{i,t}) 
  \biggl\{ 
    \sum_{j=1}^n dR_{j,t} q_{j,t} p_{\ell-1}(\lambda_{j,t})
  \biggr\} 
  \\
  &=
  -q_i \sum_{j=1}^n dR_{j,t} q_{j,t} 
  +
  \sum_{j=1}^n 
  \sum_{\ell=1}^n q_{i,t} p_{\ell-1}(\lambda_{i,t}) 
    dR_{j,t} q_{j,t} p_{\ell-1}(\lambda_{j,t}) \\
  &=
  -q_{i,t} \sum_{j=1}^n dR_{j,t} q_{j,t}
  + dR_{i,t}.
\end{align*}
In other words, the spectral weights evolve exactly according to the projection of $( dR_{i,t} )_{i=1}^n$ in the direction orthogonal to $(q_{i,t})_{i=1}^n.$  Note that
\[
  \sum_{i=1}^n q_{i,t}^2 E^{-}(\lambda_{i,t},\lambda_{i,t}) = 0,
\]
and hence one may as well assume that $dR_{i,t}$ already had this orthogonality.
\end{proof}

\subsection{The half finite variation model}

The frozen spectral weight and smooth spectral weight models featured in some sense the tamest choices for the evolution of the spectral weights.  However, it is also possible to choose evolutions of the spectral weights that lead to substantially different tridiagonal evolutions.  For example, one might hope to choose a spectral weight evolution that cancels some of the rough parts of the tridiagonal.  Here, we show it is possible to choose these weights such that the first half of the tridiagonal model will be finite variation.

In effect, it suffices to show the following lemma.
\begin{lemma}
There is a choice of coefficients $\{c^j_i\}$ so that
\[
  F(\lambda_i) = \sum_{j=1}^n c^j_i E^{-}(\lambda_j,\lambda_j) + T,
\]
for some $T$ that vanishes for entries $(k,\ell)$ with $k+\ell \leq n.$
  \label{lem:Fsolve}
\end{lemma}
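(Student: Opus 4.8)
The plan is to regard $F(\lambda_i)$ as a particular preimage of $E^{+}(\lambda_i,\lambda_i)$ under the map $M \mapsto [A,M]$ modulo $\TS$, and to exploit the fact that, by Theorem~\ref{thm:span}, the kernel of this map (acting on antisymmetric $M$) is exactly $\Span\{E^{-}(\lambda_j,\lambda_j) : 1 \le j \le n\}$, an $(n-1)$-dimensional space. The matrix $F(\lambda_i) = -\partial_y E^{-}(x,y)\vert_{x=y=\lambda_i}$ is antisymmetric and, by \eqref{eq:F}, satisfies $[A,F(\lambda_i)] = E^{+}(\lambda_i,\lambda_i) + G(\lambda_i) \in E^{+}(\lambda_i,\lambda_i) + \TS$. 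What I want is to show that I can \emph{modify} $F(\lambda_i)$ by adding an element of the kernel (which changes neither the commutator modulo $\TS$ nor anything essential) so that the result $T$ vanishes on the ``upper-left corner'' region $\{(k,\ell) : k+\ell \le n\}$. Since entries of $E^{-}(\lambda_j,\lambda_j)$ are $p_{k-1}(\lambda_j)p_{\ell-1}(\lambda_j)$ (off-diagonal), the problem is to choose $c^j_i$ so that
\[
  F(\lambda_i)_{k,\ell} = \sum_{j=1}^n c^j_i\, p_{k-1}(\lambda_j)p_{\ell-1}(\lambda_j)
  \qquad \text{for all } (k,\ell) \text{ with } k+\ell \le n,\ k>\ell,
\]
with the diagonal handled automatically since both sides' diagonals can be arranged to match (the diagonal of $E^{-}$ is zero, and $F(\lambda_i)$ is antisymmetric hence also has zero diagonal).

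\textbf{Key steps.} First I would write down $F(\lambda_i)_{k,\ell} = p_{k-1}(\lambda_i)p_{\ell-1}'(\lambda_i)$ for $k>\ell$ (this is the entry of $-\partial_y E^{-}$ at $x=y=\lambda_i$, read off from the definition of $E$). Next, I would use the minor-polynomial expansion of the derivative established in Section~\ref{sec:minors}: namely $p_{\ell-1}'(\lambda_i) = \sum_{m=1}^{\ell-1} p_{\ell-1}^{(m-1)}(\lambda_i) p_{m-1}(\lambda_i)$. Substituting, for $k > \ell$,
\[
  F(\lambda_i)_{k,\ell} = \sum_{m=1}^{\ell-1} p_{\ell-1}^{(m-1)}(\lambda_i)\, p_{k-1}(\lambda_i) p_{m-1}(\lambda_i).
\]
Now I would invoke the orthogonality relation $\sum_{j} q_j^2 p_{k-1}(\lambda_j) p_{m-1}(\lambda_j) = \delta_{k,m}$ (the row-orthogonality from Section~\ref{sec:OPs}), which lets me expand the ``rank-one-in-disguise'' products $p_{k-1}(\lambda_i)p_{m-1}(\lambda_i)$ in the basis $\{ (p_{k-1}(\lambda_j)p_{m-1}(\lambda_j))_{k} : j\}$. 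Concretely, I propose to \emph{define}
\[
  c^j_i = \sum_{m=1}^{?} q_j^2\, p_{m-1}(\lambda_j)\, p_{?}^{(m-1)}(\lambda_i)\, (\text{appropriate correction}),
\]
and verify directly that with this choice, for every $(k,\ell)$ with $k+\ell \le n$, the difference $T_{k,\ell} = F(\lambda_i)_{k,\ell} - \sum_j c^j_i p_{k-1}(\lambda_j) p_{\ell-1}(\lambda_j)$ vanishes. The crucial point enabling this is a \emph{degree count}: the minor polynomial $p_{\ell-1}^{(m-1)}$ has degree $(\ell-1-(m-1)-1)_+ = (\ell-m-1)_+$, so the products appearing have controlled degree, and when $k + \ell \le n$ the relevant polynomials evaluated across the spectrum still have degree $< n$, so the orthogonality relations close exactly. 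This is the same mechanism used in Corollary~\ref{cor:entry} to prove the vanishing conditions $M_{u,r}=0$ for $u+r \le k+\ell$, and I would essentially transpose that argument.

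\textbf{Main obstacle.} The hard part will be getting the bookkeeping of indices exactly right so that the cancellation is on the nose on the region $k+\ell \le n$ and not merely asymptotically: one must track which minor polynomials have degree small enough for the spectral orthogonality sums to be exact, and confirm that the constraint $k+\ell \le n$ (rather than, say, $\max(k,\ell) \le n/2$, which would be the naive guess) is precisely what is needed. I expect the clean way to organize this is to observe that $F(\lambda_i)$ and each $E^{-}(\lambda_j,\lambda_j)$ share the key structural feature that their $(k,\ell)$ entry factors through polynomials in $\lambda$ of bounded degree, and to phrase the whole thing as: the span of $\{E^{-}(\lambda_j,\lambda_j)\}_{j=1}^n$, restricted to entries with $k+\ell \le n$, already contains every antisymmetric ``low corner'' pattern of the form $F(\lambda_i)$ — a statement provable by the same dimension count as Theorem~\ref{thm:span} once one checks the relevant evaluation map is surjective onto the corner. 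A secondary subtlety is the diagonal: since the diagonal of every $E^{-}(\lambda_j,\lambda_j)$ is zero, I need $F(\lambda_i)$'s diagonal to already be zero on $\{k+\ell\le n, k=\ell\}$, which holds because $F(\lambda_i)$ is antisymmetric, so this causes no trouble. I would close by noting that $T$ is then whatever remains in the entries $(k,\ell)$ with $k+\ell > n$, which is all the lemma asserts.
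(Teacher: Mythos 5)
There is a genuine gap here. The lemma demands a \emph{single} vector of coefficients $(c^j_i)_{j=1}^n$ such that the one linear combination $\sum_j c^j_i E^{-}(\lambda_j,\lambda_j)$ reproduces \emph{every} entry $F(\lambda_i)_{k,\ell}$ with $k+\ell\le n$ simultaneously, and your construction never actually produces such coefficients: your candidate formula is left with question marks and an unspecified ``correction,'' and the route you sketch (expanding $p_{\ell-1}'$ in minor polynomials and then re-expanding the products via orthogonality) naturally yields coefficients that depend on $\ell$ through $p^{(m-1)}_{\ell-1}(\lambda_i)$, which is precisely what must be avoided. Your fallback — a dimension count showing the evaluation map onto the corner is surjective — cannot work: by Theorem~\ref{thm:span} and the relation $\sum_j q_j^2 E^{-}(\lambda_j,\lambda_j)=0$, the span of $\left\{ E^{-}(\lambda_j,\lambda_j) \right\}$ is only $(n-1)$-dimensional, while the corner $\left\{ (k,\ell): k+\ell\le n,\ k>\ell \right\}$ contains on the order of $n^2/4$ entries, so the restriction of the span to the corner is very far from all antisymmetric corner patterns. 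The lemma is a specific algebraic identity, not a genericity statement.

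The missing idea is a ``spectral differentiation'' identity. For any polynomial $p$ with $\deg p\le n-1$, the difference quotient $\Delta^{\lambda_j}p$ has degree $\le n-2$, so by orthogonality of $p_{n-1}$ to lower-degree polynomials
\[
  \sum_{i=1}^n q_i^2\,\frac{p(\lambda_i)-p(\lambda_j)}{\lambda_i-\lambda_j}\,p_{n-1}(\lambda_i)=0,
\]
with the $i=j$ term read as $q_j^2\,p'(\lambda_j)p_{n-1}(\lambda_j)$. Isolating that term expresses $p'(\lambda_j)$ as a linear combination of the values $\left\{ p(\lambda_m) \right\}_{m=1}^n$ whose coefficients involve only $q_m^2$, $p_{n-1}(\lambda_m)$, and the gaps $\lambda_m-\lambda_j$ — in particular they are \emph{independent of $p$}. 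This is what the paper does: applying the identity entrywise with $p(x)=E^{-}_{k,\ell}(x,x)=p_{k-1}(x)p_{\ell-1}(x)$, whose degree $k+\ell-2$ is within range exactly in the corner region $k+\ell\le n$, writes $F(\lambda_i)_{k,\ell}$ there as a universal (in $(k,\ell)$) combination of the $E^{-}_{k,\ell}(\lambda_j,\lambda_j)$, and those universal coefficients are the $c^j_i$. Your observation that the operative constraint is the degree bound coming from $k+\ell\le n$ is correct, but without the identity above your argument does not close.
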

\begin{proof}
  Suppose that $p$ is a polynomial of degree at most $n-1.$  Then
  \[
    \sum_{i=1}^n \Delta_{\lambda_i}^{\lambda_j} p(\lambda_i) p_{n-1}(\lambda_i) q_i^2 = 0
  \]
  by orthogonality.
  It follows that on rearranging the sum
  \[
    \sum_{i \neq j} \Delta_{\lambda_i}^{\lambda_j} p(\lambda_i) p_{n-1}(\lambda_i) q_i^2
    =
    p'(\lambda_j) p_{n-1}(\lambda_j) q_j^2.
  \]
  In particular, we have for $k+\ell \leq n,$ when $E^{-}_{k,\ell}(x,x)$ has degree at most $n-1,$ 
  \[
    F_{k,\ell}(\lambda_i) 
    = -\frac{1}{2}\partial_{\lambda_i} E^{-}(\lambda_i,\lambda_i)
    = \frac{-1}{2p_{n-1}(\lambda_i)q_i^2}\sum_{j \neq i} \Delta_{\lambda_j}^{\lambda_i}  E^{-}_{k,\ell}(\lambda_j,\lambda_j) p_{n-1}(\lambda_j) q_j^2.
  \]
  Hence choosing $c^j_i$ to be these coefficients, the lemma follows.
\end{proof}

\begin{corollary}
  There is a tridiagonal model $A_t$ for Dyson Brownian motion so that its entries $(k,\ell)$ with $k+\ell \leq n$ are finite variation.
\end{corollary}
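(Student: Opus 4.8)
The corollary reduces to Lemma~\ref{lem:Fsolve} together with the martingale description of the tridiagonal models developed in Sections~\ref{sec:commutator} and~\ref{sec:models}. Here is how I would proceed.

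Recall from Theorem~\ref{thm:martingales} that a tridiagonal model for $\beta$-Dyson Brownian motion is specified, at the level of martingale terms, by its rotation differential, whose martingale part is a fixed multiple of $\sum_i dZ_{i,t}\,q_{i,t}^2\,F(\lambda_{i,t})$ (coming from the eigenvalue noise) plus an arbitrary martingale taking values in the $(n-1)$-dimensional null space $\Span\{E^{-}(\lambda_{j,t},\lambda_{j,t})\}$ of Theorem~\ref{thm:span} --- the latter freedom being exactly the freedom to let the spectral weights move as a martingale (cf.\ the smooth-weight model of Theorem~\ref{thm:tridiagonalFV}). Passing $F$ through the commutator with $A$ introduces the tridiagonal matrices $G(\lambda_i)$ and $B(\lambda_i,\lambda_i)$ via \eqref{eq:F} and Proposition~\ref{prop:commutator}, so that the martingale part of $dA_t$ is a linear combination of the $G(\lambda_{i,t})$'s (with the eigenvalue-noise coefficients) and the $B(\lambda_{j,t},\lambda_{j,t})$'s (with the freely chosen spectral-weight-noise coefficients); when the latter coefficients vanish this is the frozen-weight model of Theorem~\ref{thm:tridiagonalfrozen}.

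The point of Lemma~\ref{lem:Fsolve} is that it lets us rewrite $F(\lambda_i) = \sum_j c_i^j E^{-}(\lambda_j,\lambda_j) + T_i$ with the remainder $T_i$ supported in the lower-right corner $\{k+\ell > n\}$. Using this, I would choose the martingale coefficients of the $E^{-}(\lambda_{j,t},\lambda_{j,t})$ in the rotation differential to be (a fixed multiple of) $\sum_i dZ_{i,t}q_{i,t}^2 c_i^j$, that is, exactly so as to cancel the contribution coming from the $\sum_j c_i^j E^{-}(\lambda_j,\lambda_j)$ piece of each $F(\lambda_i)$. After this cancellation the martingale part of the rotation differential is a multiple of $\sum_i dZ_{i,t}q_{i,t}^2 T_i$, which is supported in $\{k+\ell>n\}$; since the commutator with the tridiagonal $A_t$ spreads support by at most one index, its commutator with $A_t$ vanishes on $\{k+\ell\le n-1\}$. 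Plugging this into the evolution equation of Theorem~\ref{thm:dysoniso}, tracking the remaining contribution of the driving term $dW_t$ --- and, if needed, using the matrix-Brownian-noise freedom of Theorem~\ref{thm:tri2} together with the commutator calculus of Theorem~\ref{thm:comm} to absorb any residual low-band ``driving'' term on the first half --- one reads off that the entries $(k,\ell)$ of $dA_t$ with $k+\ell\le n$ carry no martingale part. The finite-variation drift is then determined exactly as in the derivations following Theorems~\ref{thm:tridiagonalfrozen} and~\ref{thm:tridiagonalFV} and, being finite variation, is irrelevant to the statement.

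The step I expect to require the most care is the verification that the \emph{full} martingale part of $dA_t$ --- not just the commutator $[A_t,dM_t]$ but also the driving noise $dW_t$ --- vanishes on the triangle $\{k+\ell\le n\}$ after the $E^{-}$-cancellation. This is a bookkeeping of boundary terms: one must check that the support-spreading by the commutator, the precise location of the cut $k+\ell\le n$, and the way the driving noise is distributed across the matrix all line up, and this is exactly what the degree estimate in the proof of Lemma~\ref{lem:Fsolve} --- the orthogonality of $p_{n-1}$ to polynomials of degree $\le n-2$ --- is designed to guarantee. Everything else is a routine computation with the identities collected in Section~\ref{sec:summary}.
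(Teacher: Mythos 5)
Your overall strategy---exploit the $(n-1)$-dimensional freedom in the rotation differential (the coefficients of the $E^{-}(\lambda_j,\lambda_j)$) together with Lemma~\ref{lem:Fsolve} to kill the martingale part on the upper-left half---is the route the paper intends; the paper gives no proof of the corollary beyond the sentence ``it suffices to show the following lemma.'' But the mechanism you describe for the last step does not close, and it fails exactly at the point you flag. If you substitute $T_i=F(\lambda_i)-\sum_j c_i^j E^{-}(\lambda_j,\lambda_j)$ for $F(\lambda_i)$ in $dM_t$, then $dW_t+[A_t,dM_t]$ is still tridiagonal only because $[A,T_i]=E^{+}(\lambda_i,\lambda_i)+G(\lambda_i)-\sum_j c_i^j B(\lambda_j,\lambda_j)$, whose off-band part is the full matrix $E^{+}(\lambda_i,\lambda_i)$; this is what cancels the off-band part of $dW_t=\sum_i d\lambda_i q_i^2 E^{+}(\lambda_i,\lambda_i)$. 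In particular $[A,T_i]$ is \emph{not} supported near the anti-diagonal, so $T_i$ cannot literally vanish on all of $\{k+\ell\le n\}$ and the ``commutator spreads support by one index'' argument is unavailable. Even if it were, it would only give vanishing on $\{k+\ell\le n-1\}$, one diagonal short of the claim. The appeal to Theorem~\ref{thm:tri2} to ``absorb a residual driving term'' does not help either: that theorem concerns a particular realization of the noise, not extra freedom in the model.

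The argument that does work is organized on the tridiagonal band rather than on the preimages. By Theorem~\ref{thm:martingales} the martingale part of any tridiagonal model is
\[
  \sum_{i=1}^n\Bigl\{-\sqrt{\tfrac{2}{\beta}}\,dZ_{i,t}\cdot q_i^2\cdot G(\lambda_{i,t})+dX_{i,t}\,B(\lambda_{i,t},\lambda_{i,t})\Bigr\},
\]
so it suffices to find, for each $i$, coefficients $c_i^j$ with $G_{k,\ell}(\lambda_i)=\sum_j c_i^j B_{k,\ell}(\lambda_j,\lambda_j)$ for every band entry with $k+\ell\le n$, and then take $dX_{j,t}=\sqrt{2/\beta}\sum_i dZ_{i,t}\,q_i^2\,c_i^j$; the $dW_t$ contribution never has to be cancelled separately, since it is absorbed by the $E^{+}$ part of $[A,F(\lambda_i)]$ exactly as in the frozen-weight model. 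This entrywise statement is what the computation inside the proof of Lemma~\ref{lem:Fsolve} actually delivers: for a band entry with $k+\ell\le n$ the polynomial $\lambda\mapsto B_{k,\ell}(\lambda,\lambda)$ from \eqref{eq:B} has degree $2\ell-1$ (diagonal) or $2\ell$ (off-diagonal), hence degree at most $n-1$, so the orthogonality of $p_{n-1}$ to lower-degree polynomials expresses $G_{k,\ell}(\lambda_i)=-\tfrac12\partial_\lambda B_{k,\ell}(\lambda,\lambda)\vert_{\lambda=\lambda_i}$ as a fixed linear combination of $\{B_{k,\ell}(\lambda_j,\lambda_j)\}_{j}$ whose coefficients do not depend on $(k,\ell)$. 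It is this degree count---not the support-spreading of the commutator---that produces the cutoff $k+\ell\le n$. So the right lemma is identified, but the reduction of the corollary to it needs to be rerouted through the $G$/$B$ (image) side of Theorem~\ref{thm:martingales}.
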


\section{Asymptotics}
\label{sec:asymptotics}

In this section we will prove the large--$n$ asymptotics given in Theorem \ref{thm:asymptotics} of the bounded--order principal submatrices of the frozen spectral weight model with external potential $V(x)=2x^2$ run from the Dumitriu--Edelman distribution.   
Let $\MP{A}$ be this tridiagonal matrix process.
Let $M$ be a fixed natural number, which will be the order of the principal submatrix that we consider.  Recall that the model is given by \eqref{eq:frozen}, which after applying \eqref{eq:AG} is given by
\begin{equation}
  \begin{aligned}
    dA_t &= -2A_t\,dt + \sum_{i=1}^n -dt \cdot \biggl\{\sum_{j \neq i}\frac{1}{\lambda_{i,t} - \lambda_{j,t}} \biggr\} \cdot q_i^2 \cdot G(\lambda_{i,t}) + dA_t' \\
    dA_t' &= \sum_{i=1}^n -\sqrt{\frac{2}{\beta}} dZ_{i,t}\cdot q_i^2 \cdot G(\lambda_{i,t})
    + \sum_{k \geq \ell} dP_{k,\ell,t} \cdot G^{k,\ell} \\
    dP_t &= \sum_{i=1}^n -\frac{q_i^4}{2} \cdot [ -E^{+}(\lambda_{i,t},\lambda_{i,t}) + G(\lambda_{i,t}), F(\lambda_{i,t})] \cdot dt
  \end{aligned}
  \label{eq:frozen2}
\end{equation}
To show the convergence statement we will show that the contribution from $A'$ will be negligible and the contributions from $dA_t-dA'_t$ will be derived using an observation about the orthogonal polynomials being approximately Chebyshev and convergence of Stieltjes transforms.

This section is organized in the following way: we first show the observation about the polynomials being approximately Chebyshev holds in a precise way. Second we show that the contribution of we prove convergence of the appropriate Stieltjes transforms. Third we show the contributions from the $dA'_t$ are negligible. Fourth we give the asypmtotics for the main terms in $dA_t- dA'_t$. Fifth we show the cancellation of the leading order terms. Lastly we collect everything to get the appropriate convergence statement.

\subsection{Convergence to Chebyshev polynomials}

The asymptotics are based off the observation that the orthogonal polynomials in this upper corner are nearly Chebyshev polynomials.
Define the semi--infinite tridiagonal matrix
\[
  D
  =
  \begin{bmatrix}
    0 & \frac{1}{2} &   &  &  & \\
    \frac{1}{2} & 0 & \frac{1}{2} &  &  & \\
      & \frac{1}{2} & 0 & \frac{1}{2}&  & \\
      &   & \frac{1}{2} &  &  &  & \\
      &   &   &  &  \ddots &  & \\
  \end{bmatrix}.
\]
The orthogonal polynomials with this Jacobi matrix are the Chebyshev polynomials of the second kind, $\left\{ U_k \right\}_{k=0}^{\infty}.$

Define stopping times, for any $R > 0,$
\[
  \tau_R = \inf\left\{ t \geq 0 ~:~ \max_{1 \leq i,j \leq M} |\MP{A}{t}[i,j]-\sqrt{n}D_{i,j}| > R \right\}.
\]
For time before $\tau_R,$ we have quantitative control on the proximity of $\MP{A}$ to $D,$ and this implies the closeness of the scaled orthogonal polynomials associated to $\MP{A}$ to the Chebyshev polynomials of the second kind.
\begin{lemma}
  Uniformly in $0 \leq k \leq M,$ $0 \leq j < k,$ $t \leq \tau_R,$ and locally uniformly in $x\in\C,$
  \begin{align*}
    &\lim_{n\to\infty} p_{k,t}(\sqrt{n} x) = U_k(x), \\
    &\lim_{n\to\infty} a_{j+1,t}p^{(j)}_{k,t}(\sqrt{n} x) = U_{k-j-1}(x), \\
    &\lim_{n\to\infty} \sqrt{n} p_{k,t}'(\sqrt{n} x) = U_k'(x).
  \end{align*}
  \label{lem:cheb}
\end{lemma}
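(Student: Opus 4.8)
The plan is to prove the three limits simultaneously by induction on $k$, using the three--term recurrences. Fix $R > 0$ and work on the event $\{t \leq \tau_R\}$, so that $|a_{j,t} - \tfrac{\sqrt n}{2}| \leq R$ and $|b_{j,t}| \leq R$ for all $1 \leq j \leq M$. Rescaling, set $\tilde a_{j,t} = a_{j,t}/\sqrt n \to \tfrac12$ and $\tilde b_{j,t} = b_{j,t}/\sqrt n \to 0$ uniformly, where the convergence is quantitative: both are $\tfrac12 + O(R/\sqrt n)$ and $O(R/\sqrt n)$ respectively. The Chebyshev polynomials $U_k$ satisfy the recurrence driven by the matrix $D$, namely $\tfrac12 U_{k-1}(x) + \tfrac12 U_{k+1}(x) = x U_k(x)$ with $U_0 = 1$, $U_{-1} = 0$; so it suffices to show that the rescaled orthogonal polynomials of $\MP{A}{t}$, evaluated at $\sqrt n x$, satisfy an $o(1)$--perturbation of this recurrence. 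Concretely, the recurrence $a_{k-1}p_{k-2}(\sqrt n x) + b_k p_{k-1}(\sqrt n x) + a_k p_k(\sqrt n x) = \sqrt n x \, p_{k-1}(\sqrt n x)$ divides through by $\sqrt n$ to give $\tilde a_{k-1}p_{k-2}(\sqrt n x) + \tilde b_k p_{k-1}(\sqrt n x) + \tilde a_k p_k(\sqrt n x) = x \, p_{k-1}(\sqrt n x)$. Since $p_0 \equiv 1 = U_0$, an induction in $k$ (solving for $p_k(\sqrt n x)$ in terms of the two previous, and noting $\tilde a_k$ is bounded away from $0$) shows $p_{k,t}(\sqrt n x) \to U_k(x)$, with error $O(R/\sqrt n)$ times a polynomial in $|x|$ of degree $k$, hence locally uniform in $x \in \C$ and uniform in $k \leq M$, $t \leq \tau_R$.

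For the minor polynomials $p^{(j)}_{k,t}$, I would use the dual recurrence \eqref{eq:DualRecurrence}: for $j < k$, $x p^{(j)}_\ell(x) = a_{j+1}p^{(j+1)}_\ell(x) + b_{j+1}p^{(j)}_\ell(x) + a_j p^{(j-1)}_\ell(x)$, together with the normalization $p^{(\ell)}_{\ell+1}(x) = 1/a_{\ell+1}$ and $\deg p^{(j)}_\ell = \ell - j - 1$. Multiplying by $a_{j+1,t}$ and writing $r_{j} := a_{j+1,t} p^{(j)}_{k,t}(\sqrt n x)$, the dual recurrence at $x \mapsto \sqrt n x$ divided by $\sqrt n$ becomes $\tilde a_{j+1} x \, p^{(j)}_{k,t}(\sqrt n x) = \tilde a_{j+1}\tilde a_{j+1}p^{(j+1)}_{k,t}(\sqrt n x) + \cdots$, which after multiplying by the right powers of $\sqrt n$ is again an $o(1)$--perturbation of the Chebyshev recurrence in the index $j$, now run \emph{downward} from $j = k-1$ (where $r_{k-1} = a_{k,t}p^{(k-1)}_{k,t}(\sqrt n x) = a_{k,t}/a_{k,t} = 1 = U_0(x)$) and $j = k$ (where $p^{(k)}_{k,t} = 0 = U_{-1}$). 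Descending induction on $k - 1 - j$ then yields $a_{j+1,t}p^{(j)}_{k,t}(\sqrt n x) \to U_{k-j-1}(x)$, uniformly as claimed. Alternatively one can invoke the explicit identity \eqref{eq:MinorIdentity}, $p^{(j)}_\ell = p_j p^{(0)}_\ell - p_\ell p^{(0)}_j$, together with the perturbation formula \eqref{eq:MinorPerturb}, to reduce everything to the already-proven convergence of the $p_{k,t}$ and of $p^{(0)}_{\ell,t}$; but the direct recurrence argument is cleaner.

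For the derivative statement, I would differentiate the (rescaled) three--term recurrence in $x$. Writing $P_{k}(x) := p_{k,t}(\sqrt n x)$, we have shown $P_k \to U_k$ locally uniformly on $\C$; these are polynomials of bounded degree ($\leq M$), so by Cauchy's integral formula (or the Weierstrass theorem on derivatives of locally uniformly convergent sequences of holomorphic functions) $P_k'(x) \to U_k'(x)$ locally uniformly as well. Since $P_k'(x) = \sqrt n \, p_{k,t}'(\sqrt n x)$, this is exactly the third claim. The uniformity in $k \leq M$ and $t \leq \tau_R$ is inherited because the degree bound and the coefficient bounds (from $\tau_R$) are uniform.

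The main obstacle is bookkeeping the uniformity: one must check that the error terms in the inductive step genuinely stay $O(R/\sqrt n)$ uniformly over $t \leq \tau_R$, $k \leq M$, and over $x$ in a fixed compact set — i.e.\ that the "bad" event is exactly captured by $\tau_R$ and that the polynomial-in-$|x|$ prefactors do not blow up as $n \to \infty$. This is straightforward since $M$ is fixed and $\tau_R$ gives a deterministic bound on the coefficients, but it is the only place requiring care; everything else is the standard continuity of finitely many recurrence steps under perturbation of the (finitely many) coefficients involved.
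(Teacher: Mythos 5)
Your proposal is correct and follows essentially the same route as the paper: rescale the three--term recurrence by $\sqrt{n}$, note the coefficients converge to those of $D$ uniformly before $\tau_R$, and induct on $k$. The paper's proof is terser and leaves the minor--polynomial and derivative claims implicit; your use of the dual recurrence \eqref{eq:DualRecurrence} run downward in $j$ (with $r_{k-1}=1$, $r_k=0$) and of Cauchy's integral formula for the derivatives are the natural ways to fill in those details and are consistent with the paper's framework.
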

\begin{proof}
  The three-term recurrence for $q_{k,t}(x) = p_{k,t}(\sqrt{n}x)$ is given by
  \begin{align*}
    x q_{k,t}(x) 
    =
    x p_{k,t}(\sqrt{n} x)
    &=
    \frac{a_{k+1,t}}{\sqrt{n}} p_{k+1,t}(\sqrt{n} x)
    +\frac{b_{k+1,t}}{\sqrt{n}} p_{k,t}(\sqrt{n} x)
    +\frac{a_{k,t}}{\sqrt{n}} p_{k-1,t}(\sqrt{n} x) \\
    &=
    \frac{a_{k+1,t}}{\sqrt{n}} q_{k+1,t}(x)
    +\frac{b_{k+1,t}}{\sqrt{n}} q_{k,t}(x)
    +\frac{a_{k,t}}{\sqrt{n}} q_{k-1,t}(x).
  \end{align*}
  From the uniform convergence of $n^{-1/2} A_t,$ the $3$-term recurrence converges to that of the Chebyshev polynomials.  Hence by induction on $k,$ the result follows. 
\end{proof}

We will need to find a uniform left--tail bound for these stopping times in $n$ which improves as $R \to \infty.$ To do so, define stopping times, for $K >0,$ 
\[
  \sigma_K = \inf\left\{ t \geq 0 ~:~ \max_{1 \leq i \leq N} |\lambda_{i,t}| > K\sqrt{n} \right\}.
\]
\begin{lemma}
  There is a $K >0$ so that for all $T >0,$
  \[
    \lim_{n \to \infty} \Pr\left[ \sigma_K \leq T \right] = 0.
  \]
  \label{lem:sigmaK}
\end{lemma}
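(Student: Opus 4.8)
I would reduce the statement to a confinement estimate for $\beta$-Dyson Brownian motion and then lean heavily on the fact that, in this section, the flow is run from its invariant (Dumitriu--Edelman) distribution. First observe that $\sigma_K$ depends only on the eigenvalue process $(\lambda_{i,t})_{i=1}^n$, which by the standing hypotheses of this section is $\beta$-Dyson Brownian motion for $V(x)=2x^2$ started from the $\beta$-Hermite ensemble of \eqref{eq:betaensemble}--\eqref{eq:tridiagonal}; since that ensemble is stationary for the flow, $(\lambda_{i,t})$ has this law at every fixed time $t$. Fix $K$ large, to be constrained only through this next point. A symmetric tridiagonal matrix with diagonal $(b_i)$ and off-diagonal $(a_i)$ has operator norm at most $\max_i|b_i|+2\max_i|a_i|$, so with $a_{i,t}\sim\chi_{\beta(n-i)/4}$ and $b_{i,t}\sim\mathcal N(0,2)$ independent, the standard Gaussian and $\chi^2$ tail bounds give a constant $c_K>0$ (positive once $K$ exceeds an absolute multiple of $\sqrt\beta$) with $\Pr[\max_i|\lambda_{i,t}|>\tfrac K2\sqrt n]\le n\,e^{-c_K n}$ for every fixed $t$.

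Next I would partition $[0,T]$ into $m=m(n)$ equal pieces with endpoints $t_\ell=\ell T/m$, where $m$ is taken to be a sufficiently large power of $n$ (fixed at the end). If some $t_\ell$ (including $t_0=0$) has $\max_i|\lambda_{i,t_\ell}|>\tfrac K2\sqrt n$, we are on an event of probability at most $(m+1)n\,e^{-c_K n}\to0$. Otherwise $\max_i|\lambda_{i,0}|<K\sqrt n$, so on $\{\sigma_K\le T\}$ the extreme eigenvalue hits $\pm K\sqrt n$ at $\sigma_K$ by continuity; using the symmetry $\lambda\mapsto-\lambda$, assume it is the ordered maximum $\overline\lambda_t:=\max_i\lambda_{i,t}$ that reaches $K\sqrt n$. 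Then on the subinterval $[t_{\ell^\ast},t_{\ell^\ast+1}]\ni\sigma_K$ we have $\overline\lambda_{t_{\ell^\ast}}\le\tfrac K2\sqrt n$, $\overline\lambda_{\sigma_K}=K\sqrt n$, and $|\lambda_{i,t}|<K\sqrt n$ for all $i$ and all $t<\sigma_K$; thus $\overline\lambda$ increases by at least $\tfrac K2\sqrt n$ over a subinterval of length $T/m$ while the whole spectrum is still confined to $[-K\sqrt n,K\sqrt n]$.

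To control this last event I would use that, since $\beta\ge1$, the particles a.s.\ never collide, so $\overline\lambda_t$ is the trajectory of a fixed original particle and satisfies
\[
  d\overline\lambda_t=\sqrt{\tfrac2\beta}\,dB_t+\bigl(\mathfrak r_t-2\overline\lambda_t\bigr)\,dt,\qquad
  \mathfrak r_t:=\sum_{j<n}\frac1{\overline\lambda_t-\lambda_{(j),t}}\ \ge\ 0,
\]
for a standard Brownian motion $B$. Over a subinterval of length $T/m$ and before $\sigma_K$, the restoring term contributes at most $2K\sqrt n\,T/m$ in absolute value, which is $<\tfrac K8\sqrt n$ once $m>16T$; and $\Pr[\sup_{[t_\ell,t_{\ell+1}]}\sqrt{2/\beta}\,|B_t-B_{t_\ell}|>\tfrac K8\sqrt n\text{ for some }\ell]\le C m\,e^{-c\beta m n/T}\to0$. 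Hence, off these negligible events, the jump of $\tfrac K2\sqrt n$ forces $\int_{t_\ell}^{t_{\ell+1}}\mathfrak r_u\,du\ge\tfrac K4\sqrt n$ for some $\ell$ (using $\mathfrak r\ge0$ to pass from the integral over $[t_{\ell^\ast},\sigma_K]$ to the one over the full subinterval). By a union bound over the $m$ subintervals, Markov's inequality at power $3/2$, Jensen's inequality on each subinterval, and stationarity,
\[
  \Pr\Bigl[\exists\ell:\ \int_{t_\ell}^{t_{\ell+1}}\mathfrak r_u\,du\ge\tfrac K4\sqrt n\Bigr]
  \ \le\ m\cdot\frac{(T/m)^{1/2}\,\mathbb E\!\int_{t_\ell}^{t_{\ell+1}}\mathfrak r_u^{3/2}\,du}{(K\sqrt n/4)^{3/2}}
  \ =\ \frac{T^{3/2}\,\mathbb E_\pi[\mathfrak r^{3/2}]}{m^{1/2}\,(K\sqrt n/4)^{3/2}},
\]
where $\mathbb E_\pi$ is expectation under the Dumitriu--Edelman law. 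The remaining input is that $\mathbb E_\pi[\mathfrak r^{3/2}]$ is polynomial in $n$: from $\mathfrak r\le (n-1)/(\overline\lambda-\lambda_{(n-1)})$ and the explicit eigenvalue density $\propto\prod_{i<j}|\lambda_i-\lambda_j|^\beta e^{-\beta\sum\lambda_i^2}$, the $-3/2$ moment of the top spacing is finite for $\beta\ge1$ (the relevant exponent $\beta-\tfrac32>-1$) and of polynomial size in $n$. Taking $m=m(n)$ a large enough power of $n$ then sends all the displayed bounds to $0$, which gives the lemma.

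The step I expect to be the main obstacle is precisely this control of the repulsion $\mathfrak r_t$: it is a priori unbounded (near-collisions), so there is no naive pathwise bound on the extreme particle over short time windows. What makes the scheme work is the sign $\mathfrak r_t\ge0$ — reducing everything to a one-sided estimate on $\int\mathfrak r_u\,du$ — together with finite, polynomially growing inverse-spacing moments of the $\beta$-Hermite ensemble. Note that a first-moment bound does \emph{not} suffice here, since the factor $m$ from the union bound cancels the $1/m$ gained by localizing in time, whereas at the $L^{3/2}$ level a net power $m^{-1/2}$ survives; and since the exponent $3/2$ stays strictly below the integrability threshold $1+\beta=2$ even at $\beta=1$, no separate treatment of $\beta=1$ is needed.
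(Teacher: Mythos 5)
Your route is genuinely different from the paper's: the paper disposes of this lemma in one line by citing a confinement estimate for Dyson Brownian motion (Theorem 5.1 of Unterberger), whereas you give a self-contained argument that leans on stationarity of the Dumitriu--Edelman law under the flow. The skeleton is sound: the single-time tail bound via the tridiagonal representation, the union bound over a mesh of $m=n^A$ times, the reduction (for $\beta\ge1$, no collisions) to the SDE of the extreme labelled particle, and the observation that the only term that can push the top particle up by $\tfrac K2\sqrt n$ over a window of length $T/m$ is the nonnegative repulsion $\mathfrak r_t$, which you then control in $L^{3/2}$ via Jensen and stationarity so that a net factor $m^{-1/2}$ survives the union bound. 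Your diagnosis that a first-moment bound on $\mathfrak r$ cannot close the argument is also correct: it would only yield a bound of order $T/K$, which is small for large $K$ but does not vanish as $n\to\infty$ for fixed $K$, which is what the lemma asserts. One caveat: the argument is tied to the stationary initial condition, whereas Theorem \ref{thm:asymptotics} nominally allows initial data with general $\nu$; since Section \ref{sec:asymptotics} explicitly restricts to the Dumitriu--Edelman start this is acceptable here, but the cited external result covers general initial conditions.

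The one substantive hole is the input you yourself flag: the claim that $\mathbb{E}_\pi[(\lambda_{(n)}-\lambda_{(n-1)})^{-3/2}]$ is finite and at most polynomial (in fact, anything subexponential would do) in $n$. Your heuristic --- the joint density carries a factor $|\lambda_i-\lambda_j|^\beta$ and $\beta-\tfrac32>-1$ for $\beta\ge1$ --- explains convergence of the integral near the diagonal for each fixed $n$, but it does not produce the quantitative $n$-dependence, and that dependence is exactly what the scheme consumes: you must choose $m(n)$ large enough that $\mathbb{E}_\pi[\mathfrak r^{3/2}]\,m^{-1/2}n^{-3/4}\to0$ while still having $m(n)\,n\,e^{-c_Kn}\to0$, so a bound of the form $\mathbb{E}_\pi[(\lambda_{(n)}-\lambda_{(n-1)})^{-3/2}]\le e^{c_Kn/4}$ is genuinely required, not merely finiteness. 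Such a level-repulsion (Wegner-type) estimate is true and provable --- for instance, bound the inverse top gap by $\sum_{i<j}|\lambda_i-\lambda_j|^{-3/2}$, use exchangeability, and then control $\mathbb{E}|\lambda_1-\lambda_2|^{-3/2}$ through the explicit two-point density of the $\beta$-Hermite ensemble --- but that last control is itself not a one-line computation, and it is the technical heart of your proof. Everything else in the proposal is routine once this estimate is supplied.
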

\begin{proof}
  See \cite[Theorem 5.1]{Unterberger}.
\end{proof}

\subsection{Stieltjes transforms and their limits}

The Stieltjes transform of a measure $\mu$ on the real line is given by 
\[
s^\mu(z) =\int_\rr \frac{1}{x-z}d\mu.
\]
We can also define the Stieltjes transform of a matrix by taking the measure to be the associated spectral measure. We make use of both the unweighted and weighted spectral measures and so define the Stieltjes transforms of $A_t$ by
\[
  s_t(z) = \frac{1}{n}\sum_{i=1}^n \frac{1}{\lambda_{i,t}/\sqrt{n}-z}
  \quad
  \text{and}
  \quad
  s_t^A(z) = \sum_{i=1}^n \frac{q_i^2}{\lambda_{i,t}/\sqrt{n}-z},
\]
for $z$ in the complement of the spectrum with respect to the complex plane.
Recall that $\mathfrak{s}$ denotes the semicircle density on $[-1,1],$
\[
  \mathfrak{s}(x) = \frac{2}{\pi}\sqrt{1-x^2}\,\one[|x| \leq 1],
\]
and that we define for $z \in \C \setminus [-1,1]$
\[
  \SCS(z) =
  \int_{-1}^1 \frac{\mathfrak{s}(x)\,dx}{x-z}
  =
  2(-z+\sqrt{z^2-1})
  .
\]
\begin{proposition}
  \label{prop:stieltjes}
  Let $K,T > 0$ and let $F \subset \C \setminus [-K,K]$ be compact.  Then 
  \[
    \max_{0 \leq t \leq \sigma_K \wedge T} \max_{z \in F} |s_t(z) - \SCS(z)| \to 0
  \]
  pointwise. This is essentially a version of uniform convergence for $0 \le t \le T$ of $\mu_{n,t} = \frac{1}{n} \sum_{i=1}^n \delta_{\lambda_i/\sqrt{n}}$ weakly to $\mathfrak{s}$.
\end{proposition}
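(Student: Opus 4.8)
The statement is a time‑uniform law of large numbers for the empirical spectral measure of $\beta$‑Dyson Brownian motion with quadratic potential, and it can also be extracted from the hydrodynamic‑limit results for that flow (e.g.\ \cite{RogersShi, Unterberger}); the plan below is a self‑contained route through Stieltjes transforms. Write $\mu_{i,t}=\lambda_{i,t}/\sqrt n$; since $V'(x)/2=2x$ for $V(x)=2x^2$, the rescaled eigenvalues solve
\[
  d\mu_{i,t}=\tfrac{1}{\sqrt n}\sqrt{\tfrac2\beta}\,dZ_{i,t}-\Bigl(2\mu_{i,t}-\tfrac1n\sum_{j\ne i}\tfrac{1}{\mu_{i,t}-\mu_{j,t}}\Bigr)dt .
\]
Applying It\^o's formula to $s_t(z)=\tfrac1n\sum_i(\mu_{i,t}-z)^{-1}$, using $\tfrac{\mu}{(\mu-z)^2}=\tfrac{1}{\mu-z}+\tfrac{z}{(\mu-z)^2}$ for the confining term and the symmetrization identity $\tfrac1{n^2}\sum_{i\ne j}\tfrac{1}{(\mu_i-z)^2(\mu_i-\mu_j)}=-s_t(z)\,\partial_z s_t(z)+O(n^{-1})$ for the interaction, one obtains
\[
  ds_t(z)=dN_t(z)+\Bigl(2s_t(z)+\bigl(2z+s_t(z)\bigr)\partial_z s_t(z)\Bigr)dt+O(n^{-1})\,dt ,
\]
where $dN_t(z)=-n^{-3/2}\sqrt{2/\beta}\sum_i(\mu_{i,t}-z)^{-2}dZ_{i,t}$ and the $O(n^{-1})\,dt$ collects the It\^o correction and the diagonal defects of the symmetrization. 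For $t\le\sigma_K$ all $\mu_{i,t}$ lie in $[-K,K]$, so every resolvent power appearing is bounded uniformly in $z\in F$; in particular $\langle N(z)\rangle_t=O(n^{-2})\,dt$, whence $\sup_{t\le T}\sup_{z\in F}|N_t(z)|=O_{\mathbb P}(n^{-1})$ by Doob's $L^2$ inequality (uniformity in $z$ being routine, since $N_t(z)$ is analytic off the spectrum).

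The deterministic limit of the displayed equation is $\partial_t\bar s=2\bar s+(2z+\bar s)\,\partial_z\bar s$, and $\SCS$ is a stationary solution: $2z+\SCS(z)=2\sqrt{z^2-1}$ and $\partial_z\SCS(z)=-2+2z/\sqrt{z^2-1}$, so $2\SCS+(2z+\SCS)\,\partial_z\SCS\equiv0$. Setting $\delta_t=s_t-\SCS$ and subtracting gives
\[
  \partial_t\delta_t=\bigl(2+\partial_z\SCS\bigr)\delta_t+\bigl(2z+\SCS\bigr)\partial_z\delta_t+\delta_t\,\partial_z\delta_t+\mathrm{err}_t ,
\]
with $\mathrm{err}_t$ the martingale‑plus‑$O(n^{-1})$ term from the previous display. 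Integrating along the characteristics $\dot z=-(2z+\SCS(z))=-2\sqrt{z^2-1}$ turns this into the scalar identity $\tfrac{d}{dt}\delta_t(z(t))=(2+\partial_z\SCS(z(t)))\,\delta_t(z(t))+\delta_t\,\partial_z\delta_t+\mathrm{err}_t$, in which $2+\partial_z\SCS(z)=2z/\sqrt{z^2-1}$ is bounded on compact subsets of $\C\setminus[-1,1]$ and $\partial_z\delta_t=\partial_z s_t-\partial_z\SCS$ is bounded uniformly in $z\in F$, $t\le\sigma_K$. A Gronwall estimate — run with a stopping‑time bootstrap keeping $\sup|\delta_t|$ below a fixed threshold on a slightly enlarged region, so that the quadratic term $\delta_t\,\partial_z\delta_t$ is absorbed — then yields, for a constant $C_{F,T}$,
\[
  \sup_{t\le\sigma_K\wedge T}\ \sup_{z\in F}\ |\delta_t(z)|\ \le\ e^{C_{F,T}}\Bigl(\sup_{w\in\mathcal K}|s_0(w)-\SCS(w)|+o_{\mathbb P}(1)\Bigr),
\]
where $\mathcal K$ is the closure of the union of the backward characteristic arcs issuing from $F$ over $[0,T]$. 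Since those arcs move monotonically away from $[-1,1]$ (and from $[-K,K]$), $\mathcal K$ is a compact subset of $\C\setminus[-1,1]$, so the first term is $o_{\mathbb P}(1)$ by the assumed convergence of the initial empirical spectral distribution to $\mathfrak{s}$ (equivalently, the classical semicircle law for the $\beta$‑Hermite ensemble when the starting matrix has the Dumitriu--Edelman law). This proves $\max_{t\le\sigma_K\wedge T}\max_{z\in F}|s_t(z)-\SCS(z)|\Prto0$; the weak convergence of $\mu_{n,t}$ to $\mathfrak{s}$ then follows from this together with tightness, which holds for $t\le\sigma_K$.

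The step I expect to be the main obstacle is the comparison: making the Gronwall estimate for the nonlinear transport equation rigorous, and verifying that the backward characteristics from $F$ stay in a fixed compact subset of $\C\setminus[-1,1]$ on $[0,T]$ — this is fine because the flow of $\dot z=-2\sqrt{z^2-1}$ pushes $z$ toward $[-1,1]$, so its time reversal pushes away from it, although one may need to enlarge $K$, which is harmless in view of Lemma~\ref{lem:sigmaK}. If only the Dumitriu--Edelman start is required, the comparison can be skipped entirely: the eigenvalue process is then stationary, so $s_t(z)$ equals $s_0(z)$ in law for each fixed $t$ and the fixed‑time statement is the semicircle law, while uniformity in $t$ follows from the SDE above (which shows $t\mapsto s_t(z)$ is equicontinuous in probability) via a union bound over a fine mesh of times, and uniformity in $z$ over $F$ follows from Vitali's theorem, using that $s_t$ is analytic and uniformly bounded near $F$ for $t\le\sigma_K$.
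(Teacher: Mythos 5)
Your argument is correct in outline, but it is a genuinely different route from the paper's. The paper's proof of Proposition~\ref{prop:stieltjes} is essentially a citation: the uniform convergence of $s_t$ to $\SCS$ is quoted directly from Theorem 1 of Rogers and Shi \cite{RogersShi}, and the only work done is to upgrade Stieltjes-transform convergence to weak convergence of $\mu_{n,t}$, which the paper does by extracting moments via contour integration around the (uniformly compact, for $t\le\sigma_K$) support. You instead reprove the hydrodynamic limit from scratch: It\^o's formula for $s_t(z)$, the symmetrization of the interaction term into $-s_t\partial_z s_t+O(n^{-1})$, verification that $\SCS$ is a stationary solution of the limiting Burgers-type equation $\partial_t\bar s=2\bar s+(2z+\bar s)\partial_z\bar s$, and a Gronwall comparison along the backward characteristics of $\dot z=-2\sqrt{z^2-1}$. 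I checked your computations (the drift identity, the stationarity of $\SCS$, the $O(n^{-2})$ quadratic variation of the martingale) and they are right; note also that since $\partial_z s_t-\partial_z\SCS$ is uniformly \emph{bounded} on the compact set swept out by the backward arcs for $t\le\sigma_K$, the term $\delta_t\,\partial_z\delta_t$ is already linear in $\delta_t$ with bounded coefficient, so the stopping-time bootstrap you flag as the main obstacle can be dispensed with and plain Gronwall suffices. What your approach buys is self-containedness and an explicit rate inherited from the initial data (which meshes with the $n^{-1/2}$ hypothesis of Theorem~\ref{thm:asymptotics}); what the paper's approach buys is brevity, at the cost of relying on \cite{RogersShi} for precisely the statement you derive. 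Your closing remark that the stationary (Dumitriu--Edelman) start allows one to skip the comparison entirely and get uniformity in $t$ from equicontinuity plus Vitali is a nice additional shortcut that the paper does not use.
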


\begin{proof}
The first statement is a consequence of Theorem 1 in Rogers and Shi \cite{RogersShi}. To get that this implies the uniform weak convergence first observe that the measures are compactly supported and so $\int x^k d\mu_{n,t} \to \int x^k d\mathfrak{s}(x)$ by contour integration, which then implies the weak convergence.
\end{proof}

Let $Q_t$ be a standard Brownian motion defined for all positive and negative times, and set
\[
  \SQ(z) = 
  s^{\mathfrak{s}}(z)c_\beta \int_{-1}^{1}1\ dQ_x+ c_\beta \int_{-1}^1 \frac{\sqrt{\mathfrak{s}(x)}dQ_x}{x-z},
\]
with $c_\beta = \sqrt{\beta}.$
\begin{proposition}
  \label{prop:stieltjes2}
  Let $K,T > 0$ and let $F \subset \C \setminus [-K,K]$ be compact.  Then there is a probability space so that
  \[
    \max_{0 \leq t \leq \sigma_K \wedge T} \max_{z \in F} |\sqrt{n}(s_t^A(z)-s_t(z)) - \SQ(z)| \Prto 0.
  \]
\end{proposition}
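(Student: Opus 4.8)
\textbf{Proof proposal for Proposition~\ref{prop:stieltjes2}.}

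The plan is to identify the difference $\sqrt{n}(s_t^A(z) - s_t(z))$ as the Stieltjes transform of a signed measure that is governed by the spectral weights $q_i^2$ relative to the flat weights $1/n$, and then to show this converges in the limit to $\SQ(z)$. Writing $\nu_{n,t} = \sum_{i=1}^n (q_i^2 - \tfrac1n) \delta_{\lambda_{i,t}/\sqrt n}$, we have exactly $\sqrt{n}(s_t^A(z) - s_t(z)) = \sqrt n \int (x-z)^{-1}\,\nu_{n,t}(dx)$. The key structural point is that, by hypothesis, the spectral weights $q^2$ follow the Dumitriu--Edelman $\operatorname{Dirichlet}(\tfrac\beta2,\dots,\tfrac\beta2)$ law \eqref{eq:dirichlet} and are independent of the eigenvalues, and in the frozen spectral weight model they do not change in time. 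So $\sqrt n\,\nu_{n,t}$ is, conditionally on the eigenvalue path, a fixed (time-independent) random linear functional of the eigenvalue locations. First I would record this conditional-independence/frozenness reduction so that the only $t$-dependence left is through the eigenvalue positions $\lambda_{i,t}$, all of which lie in $[-K\sqrt n, K\sqrt n]$ for $t \le \sigma_K$.

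Next I would analyze the Dirichlet fluctuations. A $\operatorname{Dirichlet}(\tfrac\beta2,\dots,\tfrac\beta2)$ vector on $n$ coordinates has the representation $q_i^2 = \gamma_i / \sum_j \gamma_j$ with $\gamma_i$ i.i.d. $\operatorname{Gamma}(\tfrac\beta2,1)$. Hence $\sqrt n (q_i^2 - \tfrac1n) = \tfrac{1}{\sqrt n}\big(\tfrac{\gamma_i}{\bar\gamma} - 1\big)$ where $\bar\gamma = \tfrac1n\sum_j\gamma_j \to \tfrac\beta2$ a.s.; the centered variables $\xi_i := \sqrt n(q_i^2 - \tfrac1n)$ behave like $\tfrac{1}{\sqrt n}\cdot\tfrac{2}{\beta}(\gamma_i - \tfrac\beta2)$ plus a global correction. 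The variance of $\gamma_i - \tfrac\beta2$ is $\tfrac\beta2$, which after the $2/\beta$ scaling gives variance $\tfrac2\beta$; combined with the $1/\sqrt n$ this is the source of the constant $c_\beta = \sqrt\beta$ after one tracks the normalization through (the factor arises because, for a test function $f$ with $\int f\,\mathfrak s = 0$, $\sum_i \xi_i f(\lambda_i/\sqrt n)$ has conditional variance $\approx \tfrac2\beta \cdot \tfrac1n\sum_i f(\lambda_i/\sqrt n)^2 \to \tfrac2\beta \int f^2\,\mathfrak s$, and the $\tfrac12$-vs-$1$ normalization of the limiting object absorbs the $\tfrac2\beta$ into $\sqrt\beta$). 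The term $s^{\mathfrak s}(z)c_\beta\int_{-1}^1 1\,dQ_x$ comes from the fact that $\nu_{n,t}$ is not quite a fluctuation field of a mean-zero functional: the constraint $\sum q_i^2 = 1$ and the normalization force a rank-one correction proportional to $s_t(z)$ itself, which in the limit becomes $s^{\mathfrak s}(z)$ times the total-mass fluctuation $c_\beta\int_{-1}^1 1\,dQ_x$. I would make this decomposition precise: $\sqrt n\,\nu_{n,t} = \eta_{n} - (\text{total mass fluct.})\cdot \mu_{n,t}$ where $\eta_n = \tfrac{1}{\sqrt n}\sum_i \tfrac2\beta(\gamma_i - \tfrac\beta2)\delta_{\lambda_{i,t}/\sqrt n}$ is a genuine mean-zero field, and identify its Gaussian limit via a CLT for the linear statistics $\int f\,d\eta_n$, jointly over a class of test functions $f = (x-z)^{-1}$.

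Then I would upgrade pointwise-in-$z$ convergence to uniform convergence over the compact set $F$ and over $0 \le t \le \sigma_K \wedge T$. Uniformity in $z$ is standard: $(x-z)^{-1}$ and its $z$-derivative are uniformly bounded and equicontinuous for $x \in [-K,K]$, $z \in F$, so a tightness argument in the space of analytic functions on a neighborhood of $F$ plus finite-dimensional convergence suffices (Montel's theorem / Arzel\`a--Ascoli on derivatives). Uniformity in $t$ is the delicate part and, I expect, the main obstacle: although the $q_i^2$ are frozen, the eigenvalue locations vary with $t$, so the random functional $z \mapsto \sqrt n\int(x-z)^{-1}\nu_{n,t}(dx)$ is a process in $t$. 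One must control its modulus of continuity in $t$ uniformly in $n$. Here I would use that for $t \le \sigma_K$ the eigenvalues stay in a fixed compact set, that Dyson Brownian motion gives H\"older control on $t \mapsto \lambda_{i,t}$, and that $\sum_i \xi_i^2$ is $O(1)$ with high probability (from the Dirichlet/Gamma structure), so that $|\sqrt n \int(x-z)^{-1}\nu_{n,t} - \sqrt n \int(x-z)^{-1}\nu_{n,s}| \le C(F) \big(\sum_i |\xi_i|\,|\lambda_{i,t}-\lambda_{i,s}|/\sqrt n\big)$, which via Cauchy--Schwarz and the H\"older modulus of the $\lambda$-paths yields an equicontinuity estimate uniform in $n$. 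Combined with a Skorokhod-type coupling (the "there is a probability space" in the statement) identifying the limit with $\SQ$, which is driven by a two-sided Brownian motion $Q$, this gives the claimed $\Prto 0$. The construction of that common probability space---coupling the Dirichlet fluctuation field at each $n$ to the single Gaussian process $\SQ$ so that convergence holds in probability rather than merely in distribution---is the other place where care is needed; I would do it by Skorokhod representation on the path space $C([0,T]; \mathrm{Hol}(F))$ after establishing tightness there.
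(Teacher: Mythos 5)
Your overall architecture matches the paper's: you use the Gamma representation of the $\operatorname{Dirichlet}(\tfrac\beta2,\dots,\tfrac\beta2)$ weights, split $\sqrt n(s_t^A-s_t)$ into a total-mass (normalization) fluctuation, which produces the $s^{\mathfrak s}(z)\,c_\beta\int dQ$ term, plus a mean-zero linear statistic of the centered Gamma variables, which produces the stochastic integral; and you exploit that the weights are frozen so that all time dependence enters only through the eigenvalue locations. Where you diverge is in the two execution steps. For the common probability space, the paper does not pass through tightness plus Skorokhod representation on $C([0,T];\mathrm{Hol}(F))$: it applies the Koml\'os--Major--Tusn\'ady strong approximation to the partial sums of the i.i.d.\ Gamma fluctuations, which directly constructs a single Brownian motion $Q$ (built from the time-independent weights alone) such that $\sup_s|S_n(s)-B(s)|\Prto 0$; the convergence of the martingale part is then concluded via Kurtz--Protter. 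This is more economical because the coupling is done once, independently of $t$ and $z$.

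The step I would push back on is your treatment of uniformity in $t$. Your proposed modulus-of-continuity bound
$\sum_i|\xi_i|\,|\lambda_{i,t}-\lambda_{i,s}|/\sqrt n \le (\sum_i\xi_i^2)^{1/2}(\tfrac1n\sum_i(\lambda_{i,t}-\lambda_{i,s})^2)^{1/2}\cdot\sqrt n\,/\sqrt n$ requires $\big(\sum_i(\lambda_{i,t}-\lambda_{i,s})^2/n\big)^{1/2}$ to be small uniformly in $n$ for $|t-s|$ small, but the individual drifts in the Dyson flow (the terms $\sum_{j\ne i}(\lambda_i-\lambda_j)^{-1}$ and $-V'(\lambda_i)/2$) are of order $\sqrt n$ away from exact equilibrium, so each summand can contribute $n|t-s|^2$ and the bound is not obviously uniform in $n$ without a genuine rigidity or near-cancellation input that you do not supply. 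The paper sidesteps this entirely: it replaces $\lambda_{i,t}/\sqrt n$ by the deterministic semicircle quantiles $\gamma_i$, so that the main term $\sum_i\xi_i(\gamma_i-z)^{-1}$ is literally time-independent, and the replacement error is controlled by a conditional second-moment computation using independence of the $Y_i$ together with $\sup_{t\le\sigma_K\wedge T}\max_i|(\lambda_{i,t}/\sqrt n-z)^{-1}-(\gamma_i-z)^{-1}|\Prto 0$, which follows from the uniform weak convergence in Proposition~\ref{prop:stieltjes}. I would recommend adopting that quantile-replacement device (a sup-norm estimate) in place of the equicontinuity argument; with that substitution your proof closes along essentially the paper's lines.
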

\begin{proof}
We make use of the martingale central limit theorem to show convergence directly. Recall that the the vector $(q_1^2,...,q_n^2)$ has Dirichlet$(\frac{\beta}{2},...,\frac{\beta}{2})$ distribution. We couple the $q_i$'s to an independent family of i.i.d. random variables $Y_1,Y_2,...$ in the following way. Let $Y_i \sim$ Gamma$(\frac{\beta}{2},\theta)$ then for $V_n = \sum_{i=1}^n Y_i$ we define 
\[
\left( \frac{Y_1}{V_n},...,\frac{Y_n}{V_n}\right) = (q_1^2,...,q_n^2).
\]
Using this we can use the decomposition  $\sqrt {n} (s_t^A(z)-s_t(z)) = X_n^{(t)}(z)+M_n^{(t)}(1,z)$ where 
 \[
 X^{(t)}_n(z) = \sqrt n \sum_{i=1}^{n} \left( \frac{Y_i}{\theta\frac{\beta}{2}n} - q_i^2\right) \frac{1}{\lambda_{i,t}/\sqrt n-z} = \frac{V_n- \theta\frac{\beta}{2}n}{\theta \frac{\beta}{2}\sqrt n} s_t^A. 
\]
and $M_n^{(t)}$ is a family of continuous time martingales defined by 
\[
M_n^{(t)}(s,z) = \sqrt{n} \sum_{i=1}^{\lfloor sn\rfloor} \left( \frac{1}{n} - \frac{Y_i}{\theta \frac{\beta}{2} n}\right) \frac{1}{\lambda_{i,t}/\sqrt n-z}.
\]

To show convergence we begin by defining
\[
S_n(s) =\sqrt{\frac{2}{\beta}}\sqrt{n} \sum_{k=1}^{\lfloor sn\rfloor} \left(\frac{Y_i}{\theta \frac{\beta}{2} n}- \frac{1}{n}\right).
\]
A strong approximation theorem by Koml\'os, Major, and Tusn\'ady \cite{KMT} gives us that there exists a Brownian motion $B(s)$ so that  $\sup_{s\in[0,T] }|S_n(s) - B(s)| \Prto 0.$ Let $Q(x) = B((x+1)/2)$. Notice that $X_n^{(t)}$ is bounded an so $|s_t(z)-s_t^A(z)|\to 0$ for all $t$. In particular we get that $\sup_{0\le t \le T} \sup_{z\in F}| s_t^A(z)- s^{\mathfrak{s}}(z)|\to 0$ in probability. From this we get 
\begin{equation}
 \max_{0 \leq t \leq \sigma_K \wedge T} \max_{z \in F}\left| X^{(t)}_n(z) - s^{\mathfrak{s}}(z)c_\beta \int_{-1}^{1}1\ dQ_x \right| \to 0 \quad \text{ in probability.}
 \label{eq:Xlimit}
\end{equation}
We now turn to the martingale term. In order to prove convergence we will separate the $\lambda_{i,t}$ from the main term. Let $\gamma_i$ be the $i/n$th quantile of $\mathfrak{s}$. That is $\gamma_i$ satisfies
\[
\int_{-1}^{\gamma_i} \mathfrak{s}(x)dx = \frac{i}{n}.
\]
Now observe that $M_n^{(t)}(s)$ may be rewritten as
\[
M_n^{(t)}(s) = \sqrt{n} \sum_{i=1}^{\lfloor sn\rfloor} \left( \frac{1}{n} - \frac{Y_i}{\theta \frac{\beta}{2} n}\right)  \frac{1}{\gamma_i-z}
+\sqrt{n} \sum_{i=1}^{\lfloor sn\rfloor} \left( \frac{1}{n} - \frac{Y_i}{\theta \frac{\beta}{2} n}\right)\left(\frac{1}{\lambda_{i,t}/\sqrt n-z}-  \frac{1}{\gamma_i-z}\right).
\]
The second moment of the second term, conditional on the eigenvalue evolution, then may be bounded by 
\[
n  \max_{1\le i \le n}\left\{ \left(\frac{1}{\lambda_{i,t}/\sqrt n-z}-  \frac{1}{\gamma_i-z}\right)^2 \right\} \sum_{i=1}^{\lfloor sn \rfloor} \Exp\left( \frac{1}{n} - \frac{Y_i}{\theta \frac{\beta}{2} n}\right)^2.
\]
If we can show that the maximum converges to $0$ in probability that will show that the second term converges to 0 in probability. Recall that $\sup_{0\le t \le T} \max_{1\le i \le n}|\frac{1}{\lambda_{i,t}/\sqrt n-z}-  \frac{1}{\gamma_i-z}| \to 0$ in probability. This follows from convergence of the quantile functions which can be obtained from the weak convergence in Proposition \ref{prop:stieltjes}. 
We now show the first sum appearing in $M_n^{(t)}$ converges to a stochastic integral.
We make use of the semicircle convergence again to get that 
\[
  \frac{1}{n}
\sum_{i=1}^{\lfloor sn\rfloor}  \frac{1}{\gamma_i-z} \to \int_{-1}^{2s -1} \frac{ \mathfrak{s}(x)}{x-z} dx.
\]
This convergence is uniform in $s$.  By theorem 4.6 in Kurtz and Protter \cite{KP} we get that 
\[ 
\max_{0 \leq t \leq \sigma_K \wedge T} \max_{z \in F} \left|M_n^{(t)} - c_\beta \int_{-1}^1 \frac{\sqrt{\mathfrak{s}(x)}dQ_x}{x-z}\right| \Prto 0.
\]
This together with (\ref{eq:Xlimit}) completes the proof.
\end{proof}

Having shown that the Stieltjes transforms converge, we can show that stopping times $\tau_R$ advance to infinity for large $n$ and large $R.$
\begin{proposition}
  Let $T > 0,$ then 
  \[
    \limsup_{R \to \infty} \limsup_{n \to \infty} \Pr\left[ \tau_R \leq T \right] = 0.
  \]
  \label{prop:tightness}
\end{proposition}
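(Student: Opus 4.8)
The plan is to use that, in the frozen--weight model, the entries of $\MP{A}{t}$ are functionals of the weighted spectral measure alone. By the construction of Section~\ref{sec:OPs}, the $\{p_{k,t}\}$ are the orthonormal polynomials of $\mu_t^A := \sum_{i=1}^n q_i^2\,\delta_{\lambda_{i,t}}$, so $n^{-1/2}\MP{A}{t}$ is the Jacobi matrix of the rescaled measure $\tilde\mu_t := \sum_{i=1}^n q_i^2\,\delta_{\lambda_{i,t}/\sqrt n}$. Consequently $n^{-1/2}a_{k,t}$ and $n^{-1/2}b_{k,t}$, for $1\le k\le M$, are fixed ($n$--independent) functions of the moments $m_{j,t} := \int x^j\,d\tilde\mu_t$, $0\le j\le 2M-1$; these functions are real--analytic on the open set of moment sequences whose relevant Hankel determinants are nonzero, and that set contains the moment sequence $m^{\mathfrak{s}}$ of the semicircle law $\mathfrak{s}$, whose Jacobi matrix is $D$. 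So it suffices to control the moments $m_{j,t}$, equivalently the first $2M$ Laurent coefficients of $s_t^A$ at $z=\infty$.

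First I would reduce to the good event $\{\sigma_K > T\}$: by Lemma~\ref{lem:sigmaK} fix $K$ with $\limsup_{n}\Pr[\sigma_K \le T] = 0$, and bound $\Pr[\tau_R \le T] \le \Pr[\sigma_K \le T] + \Pr[\tau_R \le T \wedge \sigma_K]$. On $[0,\sigma_K]$ the measure $\tilde\mu_t$ is supported in $[-K,K]$, so its moments are obtained by Cauchy's formula from the values of $s_t^A$ on the circle $F = \{|z| = 2K\}$, which is compact in $\C\setminus[-K,K]$. On $F$, Proposition~\ref{prop:stieltjes2} gives $\max_{t\le T\wedge\sigma_K}\max_{z\in F}\bigl|\sqrt{n}(s_t^A(z) - s_t(z)) - \SQ(z)\bigr| \Prto 0$, and combining Proposition~\ref{prop:stieltjes} with the fact that linear eigenvalue statistics of $\beta$--Dyson Brownian motion fluctuate at scale $1/n$ uniformly on $[0,T]$ (the central limit theorem of \cite{Unterberger}, or the dynamical rigidity estimates of \cite{HuangLandon}) gives $\max_{t\le T\wedge\sigma_K}\max_{z\in F}\sqrt{n}\,|s_t(z) - \SCS(z)|$ bounded in probability. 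Adding these and reading off the first $2M$ Laurent coefficients on $F$ yields, uniformly in $0\le t\le T\wedge\sigma_K$ and $0 \le j \le 2M-1$, that $\sqrt{n}(m_{j,t} - m^{\mathfrak{s}}_j)$ is bounded in probability; in particular $\sup_{t}\|(m_{0,t},\dots,m_{2M-1,t}) - m^{\mathfrak{s}}\| \to 0$ in probability.

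Hence, with probability tending to $1$, the moment sequence stays for all $t \le T\wedge\sigma_K$ inside a fixed neighbourhood of $m^{\mathfrak{s}}$ on which the moment--to--Jacobi--coefficient map is Lipschitz; applying that bound gives
\[
  \sup_{0 \le t \le T\wedge\sigma_K}\ \max_{1\le i,j\le M}\bigl| \MP{A}{t}[i,j] - \sqrt{n}\,D_{i,j}\bigr|
  \ =\ \sqrt{n}\cdot O_{\Pr}\!\bigl(n^{-1/2}\bigr)\ =\ O_{\Pr}(1),
\]
a random variable tight in $n$, call it $\Xi_n$. Since $\{\tau_R \le T\wedge\sigma_K\} \subseteq \{\Xi_n \ge R\}$, we get $\limsup_n \Pr[\tau_R \le T] \le \limsup_n\Pr[\Xi_n \ge R] =: \eta(R)$ with $\eta(R) \downarrow 0$ as $R\to\infty$, and taking $R \to \infty$ completes the proof.

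The main obstacle is the step controlling $s_t - \SCS$ at the sharp order $n^{-1/2}$ uniformly in $t \le T\wedge\sigma_K$: Proposition~\ref{prop:stieltjes} alone only gives $o(1)$, which would yield the far weaker $\max_{i,j\le M}|\MP{A}{t}[i,j] - \sqrt n\,D_{i,j}| = o(\sqrt n)$ and not the tightness we need. This requires inputting that the linear statistics of $\beta$--Dyson Brownian motion have $O(1/n)$--size fluctuations as a process on $[0,T]$. Everything else is soft: the reduction via $\sigma_K$, the classical analyticity of the recurrence coefficients in the moments near the Chebyshev point, and Propositions~\ref{prop:stieltjes}--\ref{prop:stieltjes2}.
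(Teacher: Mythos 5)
Your proposal is correct and follows essentially the same route as the paper: reduce to the event $\{\sigma_K > T\}$ via Lemma~\ref{lem:sigmaK}, express the Jacobi coefficients as analytic functions of the moments of the weighted spectral measure via the Hankel--determinant formula, and deduce tightness of $\max_{t\le T\wedge\sigma_K}|\MP{A}{t}[i,j]-\sqrt{n}D_{i,j}|$ from the $O(n^{-1/2})$ convergence of the Stieltjes transforms. You are also right to flag that the $n^{-1/2}$-rate control of $s_t-\SCS$ is not literally contained in Proposition~\ref{prop:stieltjes}; the paper cites that proposition for the rate but is implicitly relying on the finer fluctuation results for linear statistics (e.g.\ \cite{Unterberger}), exactly as you indicate.
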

\begin{proof}
  Let $K > 1$ be a constant as in Lemma~\ref{lem:sigmaK}.  Let $\gamma$ be a smooth contour winding once in the positive orientation around $[-K,K]$ in the complex plane. 
  Define the $k$-th moment
  \[
    c_{k,t} 
    = \sum_{i=1}^n q_i^2 (\lambda_{i,t}/n)^2 
    = \frac{1}{2\pi i} \oint_\gamma s^A_t(z) z^k\,dz.
  \]
  From \cite[(2.2.6)]{Szego}, it is possible to express for all $1 \leq k \leq n-1$
  \[
    p_k(\sqrt{n} x) = \frac{1}{\sqrt{D_kD_{k-1}}}
    \left|
    \begin{matrix}
      c_0 & c_1 & c_2 & \dots & c_k          \\
      c_1 & c_2 & c_3 & \dots & c_{k+1}      \\
      \dots & \dots & \dots & \dots & \dots  \\
      c_{k-1} & c_k & c_{k+1} & \dots & c_{2k-1} \\
      1 & x & x^2 & \dots & x^{k} \\
    \end{matrix}
    \right|,
  \]
  where $D_k$ is the Hankel determinant $\det[c_{i+j}]_{i,j=0,\dots, k}.$
  By Proposition~\ref{prop:stieltjes}, each coefficient of $p_k(\sqrt{n}x)$ converges with rate $1/\sqrt{n}$ to the corresponding coefficient of $U_k(x),$ the orthogonal polynomials associated to the semicircle, uniformly for time up to $T \wedge \sigma_K.$  That is to say, for any natural numbers $i,j,$
  \[
    \max_{0 \leq t \leq T \wedge \sigma_K} |\MP{A}{t}[i,j]/\sqrt{n} - D_{i,j}|/\sqrt{n} 
  \]
is tight as a family of variables in $n.$  Hence 
\[
    \limsup_{R \to \infty} \limsup_{n \to \infty} \Pr\left[ \tau_R \leq T \right] 
    \leq 
    \limsup_{n \to \infty}
    \Pr\left[ \sigma_K \leq T \right],
\]
which is equal to $0$ by Lemma~\ref{lem:sigmaK}.
\end{proof}

\subsection{Negligible terms}
Recall the decomposition given in (\ref{eq:frozen2}). In this section we show that the $A'_t$ terms will be negligible. Recall
\begin{equation*}
  \begin{aligned}
    dA_t' &= \sum_{i=1}^n -\sqrt{\frac{2}{\beta}} dZ_{i,t}\cdot q_i^2 \cdot G(\lambda_i)
    + \sum_{k \geq \ell} dP_{k,\ell,t} \cdot G^{k,\ell} \\
    dP_t &= \sum_{i=1}^n -\frac{q_i^4}{2} \cdot [ -E^{+}(\lambda_i,\lambda_i) + G(\lambda_i), F(\lambda_i)] \cdot dt
  \end{aligned}
\end{equation*}
Here we will also need that the spectral weights are relatively flat.  
\begin{lemma}
  There is a constant $C_\beta > 0$ so that with probability going to $1$
  \[
    \max_{1 \leq i \leq n} q_i^2 \leq \frac{C_\beta \log n}{n},
  \]
  as $n \to \infty.$
  \label{lem:qibound}
\end{lemma}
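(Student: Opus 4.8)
The plan is to exploit the Dirichlet structure of the spectral weights through the standard Gamma representation. Recall that $(q_1^2,\dots,q_n^2) \sim \operatorname{Dirichlet}(\tfrac{\beta}{2},\dots,\tfrac{\beta}{2})$, which means we may write $q_i^2 = Y_i / V_n$ where $Y_1,\dots,Y_n$ are i.i.d.\ $\operatorname{Gamma}(\tfrac{\beta}{2},\theta)$ random variables and $V_n = \sum_{i=1}^n Y_i$; this is precisely the coupling used already in the proof of Proposition~\ref{prop:stieltjes2}, so we may reuse it here. The strategy is then to bound the numerator $\max_i Y_i$ above by $O(\log n)$ and the denominator $V_n$ below by a constant multiple of $n$, both with probability tending to $1$.

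First I would handle the denominator: since $V_n$ is a sum of $n$ i.i.d.\ positive random variables with finite mean $\Exp Y_1 = \tfrac{\beta\theta}{2}$, the weak law of large numbers (or a one-sided Chernoff/Cram\'er bound, which gives exponential decay and is more than enough) yields $\Pr[V_n \leq \tfrac{1}{2}\Exp[V_n]] = \Pr[V_n \leq \tfrac{\beta\theta}{4}n] \to 0$. Next the numerator: a $\operatorname{Gamma}(\tfrac{\beta}{2},\theta)$ variable has an exponential tail, $\Pr[Y_1 > t] \leq C' e^{-t/\theta} t^{\beta/2-1}$ for large $t$ (or simply $\Pr[Y_1 > t] \le e^{-c t}$ for $t$ large and a suitable $c>0$), so a union bound over the $n$ variables gives $\Pr[\max_{1\le i\le n} Y_i > A\log n] \le n \cdot \Pr[Y_1 > A\log n] \to 0$ once $A$ is chosen large enough (depending on $\beta$ and $\theta$; any $A > 2/c$ works after absorbing the polynomial correction). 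On the intersection of these two high-probability events,
\[
  \max_{1\le i\le n} q_i^2 = \frac{\max_i Y_i}{V_n} \le \frac{A\log n}{(\beta\theta/4)\,n} = \frac{C_\beta \log n}{n}
\]
with $C_\beta = 4A/(\beta\theta)$, which is the claimed bound (note $\theta$ is a free scale parameter and cancels in the final ratio, as it must since the $q_i^2$ do not depend on $\theta$).

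There is no real obstacle here; the only mild subtlety is making sure the two events are combined correctly and that the constant $C_\beta$ is extracted cleanly so that the statement holds as stated (not merely up to a further constant). One should also note that $\log n$ is not tight --- the true order of $\max_i q_i^2$ is $\Theta(\log n / n)$ as well, so nothing is lost --- but since the lemma only asserts an upper bound of this order, the crude union bound above suffices and there is no need to prove a matching lower bound or to sharpen the constant.
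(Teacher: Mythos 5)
Your argument is correct: the Gamma representation $q_i^2 = Y_i/V_n$, a law-of-large-numbers lower bound on $V_n$, and a union bound over the exponential tails of the $Y_i$ give exactly the claimed $O(\log n / n)$ bound with high probability, and the constant is extracted cleanly. The paper in fact states Lemma~\ref{lem:qibound} without proof, so there is nothing to compare against, but your route is the standard one and is consistent with the coupling $(Y_1/V_n,\dots,Y_n/V_n)=(q_1^2,\dots,q_n^2)$ that the paper itself introduces in the proof of Proposition~\ref{prop:stieltjes2}.
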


\begin{proposition}
  For any $T,R,K > 0$ as $n\to\infty,$
  \[
    \max_{1\leq i,j \leq M}
    \biggl|
    \int_0^{T \wedge \tau_R \wedge \sigma_K}
    dA_{i,j,t}'
    \biggr|
    \Prto
    0.
  \]
  \label{prop:dAt}
\end{proposition}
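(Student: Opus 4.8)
The plan is to treat separately the martingale part
\[
  dN_t = -\sqrt{\tfrac{2}{\beta}}\sum_{i=1}^n dZ_{i,t}\,q_i^2\,G(\lambda_{i,t})
\]
and the finite variation part $dV_t=\sum_{k\ge\ell}dP_{k,\ell,t}\,G^{k,\ell}$ of $dA_t'$, and to show that each contributes a quantity tending to $0$ in probability to every bounded principal submatrix entry. Throughout I work on $\{t\le\tau_R\wedge\sigma_K\}$, harmlessly enlarging the window defining $\tau_R$ to a fixed multiple $cM$ of $M$ (this is permitted since Proposition~\ref{prop:tightness} and Lemma~\ref{lem:cheb} apply verbatim for any fixed submatrix size). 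On this event $|a_{j,t}-\tfrac12\sqrt n|\le R$ and $|b_{j,t}|\le R$ for $j\le cM$, and $|\lambda_{m,t}|\le K\sqrt n$ for every $m$. Running the three--term recurrence for $\{p_{j,t}\}$ and for the minor polynomials, and differentiating it, one obtains by induction on $j$ deterministic constants $C=C(M,R,K)$, uniform in $n$ and in the eigenvalue $\lambda_{m,t}$, such that for $j,r\le cM$
\[
  |p_{j,t}(\lambda_{m,t})|\le C,\qquad
  |p_{j,t}'(\lambda_{m,t})|\le C n^{-1/2},\qquad
  |a_{j+1,t}\,p^{(r)}_{j,t}(\lambda_{m,t})|\le C ;
\]
these are the quantitative, uniform--in--$m$ forms of Lemma~\ref{lem:cheb}, and the point is that the single factor $a_j\sim\tfrac12\sqrt n$ exactly balances the single derivative factor $p'\sim n^{-1/2}$ in every expression below.

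For the martingale part, fix $i,j\le M$. The stopped process $t\mapsto\int_0^{t\wedge\tau_R\wedge\sigma_K}dN_{i,j,s}$ is a continuous martingale with quadratic variation $\tfrac{2}{\beta}\int_0^{t\wedge\tau_R\wedge\sigma_K}\sum_m q_m^4\,(G(\lambda_{m,s})_{i,j})^2\,ds$. By \eqref{eq:G} and \eqref{eq:B}, the entry $G(\lambda_m)_{i,j}$ with $i,j\le M$ is a bounded--index sum of terms of the form $a\,p\,p'$, so the estimates above give $|G(\lambda_m)_{i,j}|\le C$ uniformly in $m$ on $\{t\le\tau_R\wedge\sigma_K\}$. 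Hence the quadratic variation at time $T\wedge\tau_R\wedge\sigma_K$ is at most $\tfrac{2}{\beta}C^2 T\sum_m q_m^4$. Since $\Exp[\sum_m q_m^4]=O(1/n)$ for Dirichlet weights (equivalently, on a probability--tending--to--$1$ event $\sum_m q_m^4\le\max_m q_m^2\le C_\beta\log n/n$ by Lemma~\ref{lem:qibound}), taking expectations gives $\Exp[\langle\cdot\rangle_{T\wedge\tau_R\wedge\sigma_K}]=O(T/n)\to0$, whence $\int_0^{T\wedge\tau_R\wedge\sigma_K}dN_{i,j,s}\Prto0$ (using Doob's $L^2$ inequality if one also wants the supremum over $t\le T$).

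For the finite variation part, note first that $G^{k,\ell}=[A,M^{k,\ell}]-W^{k,\ell}\in\TS$ is tridiagonal, and by the support of $M^{k,\ell}$ recorded in Corollary~\ref{cor:entry} (namely $M^{k,\ell}_{u,r}=0$ unless $u+r>k+\ell$ and $u-r<k-\ell$), together with the fact that commuting with the tridiagonal $A$ widens support by one, $G^{k,\ell}_{i,j}$ vanishes for $i,j\le M$ unless $k+\ell\le 2M+O(1)$; so only $O(M^2)$ pairs $(k,\ell)$ enter $(dV_t)_{i,j}$, and for those the formula of Remark~\ref{rem:Gkl} together with the polynomial bounds and $\sum_m q_m^2=1$ gives $|G^{k,\ell}_{i,j}|\le C$. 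It remains to bound $\dot P_{k,\ell,t}=\sum_m-\tfrac{q_m^4}{2}[-E^{+}(\lambda_m,\lambda_m)+G(\lambda_m),F(\lambda_m)]_{k,\ell}$ for $k,\ell$ of bounded order. Inspecting the explicit expressions \eqref{eq:GF}, \eqref{eq:GFe1}, \eqref{eq:GFe2} for $[G,F]$ and the displayed formula for $[E^{+},F]$ (whose only factor of unbounded size, $q_m^{-2}$, is cancelled by the prefactor $q_m^4$), every relevant entry is a bounded--index product containing exactly one factor $a\sim\tfrac12\sqrt n$ and exactly one derivative factor $p'\sim n^{-1/2}$, hence is $O(n^{-1/2})$ times $q_m^2$ or $q_m^4$; summing over $m$ using $\sum_m q_m^2=1$ yields $|\dot P_{k,\ell,t}|\le C n^{-1/2}$ on $\{t\le\tau_R\wedge\sigma_K\}$. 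Therefore $|(dV_t)_{i,j}|\le C M^2 n^{-1/2}\,dt$ and $\bigl|\int_0^{T\wedge\tau_R\wedge\sigma_K}(dV_t)_{i,j}\bigr|\le C M^2 T n^{-1/2}\to0$; a union bound over the finitely many $i,j\le M$ finishes the argument. The main obstacle is precisely establishing the uniform--in--$n$, uniform--in--$m$ polynomial and derivative estimates and verifying that in every relevant quantity the order--$\sqrt n$ factors ($a_j$) and the order--$n^{-1/2}$ factors ($p_j'$, $p^{(r)}_j$) pair off so that surviving terms are of order $1$ rather than order $\sqrt n$; once this is in hand the martingale bound is a routine Burkholder--Davis--Gundy computation and the finite variation bound a direct size count.
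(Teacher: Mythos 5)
Your proof is correct and follows essentially the same route as the paper: split $dA'_t$ into its martingale and finite--variation parts, control the former through its quadratic variation $\sum_i q_i^4(G^*)^2$ combined with Lemma~\ref{lem:qibound}, and control the latter using the uniform Chebyshev asymptotics of Lemma~\ref{lem:cheb} together with the support property of $G^{k,\ell}$ restricting to $k,\ell\le 2M$; the paper merely organizes the finite--variation estimate through the decomposition \eqref{eq:FVterms} into $dS_t,dR^1_t,dR^2_t,dR^3_t$ rather than bounding $\dot P_{k,\ell,t}$ directly, which is a cosmetic difference. One bookkeeping correction: the surviving terms of $q_m^4[-E^{+}+G,F]_{k,\ell}$ carry either one factor $a\sim\sqrt{n}/2$ and \emph{two} derivative factors (as in \eqref{eq:GF}, or one $a$ and a $p''$ in the $\mathcal{E}$ terms), or else no $a$ and a single $p'$ with the $q_m^{-2}$ cancelled by the prefactor, so your claimed order $O(n^{-1/2})$ per term is right --- but not because a single $a$ pairs with a single $p'$, since that pairing gives $O(1)$ and is precisely what makes $G$ itself order one.
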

\begin{proof}
  Let $\zeta$ be the stopping time $T \wedge \tau_R \wedge \sigma_K.$
  We consider the local martingale and finite variation portions separately.  Recall that entries of $G$ are given by
  \begin{equation}
    G_{k,\ell}(\lambda)
  =\begin{cases}
    a_{\ell-1} p_{\ell-2}(\lambda)p_{\ell-1}'(\lambda) - a_\ell p_{\ell-1}(\lambda) p_{\ell}'(\lambda)&  \\
    +a_{\ell-1} p_{\ell-1}(\lambda)p_{\ell-2}'(\lambda) - a_\ell p_{\ell}(\lambda) p_{\ell-1}'(\lambda),  & \text{ if } k = \ell , \\
    a_\ell (p_{\ell-1}(\lambda)p_{\ell-1}'(\lambda) - p_\ell(\lambda)p_\ell'(\lambda)), & \text{ if } k = \ell + 1 ,\\
    0, & \text{ otherwise.}
  \end{cases}
  \label{eq:GG}
\end{equation}
Hence in light of Lemma~\ref{lem:cheb}, we have that
\[
  G^* = \max_{1 \leq i \leq n} \max_{0 \leq t \leq \zeta} \max_{1\leq k,\ell \leq M}
  |G_{k,\ell}(\lambda_{i,t})|
\]
is tight as a family of random variables in $n.$ Employing this bound, the quadratic variation of the local martingale is dominated by 
\[
  \biggl\langle
  \sum_{i=1}^n -\sqrt{\frac{2}{\beta}} dZ_{i,t}\cdot q_i^2 \cdot G_{k,\ell}(\lambda_i)
\biggr\rangle
\leq 
\sum_{i=1}^n \frac{2}{\beta}q_i^4 (G^*)^2\,dt.
\]
By Lemma~\ref{lem:qibound}, it follows that
\[
    \max_{1\leq k,\ell \leq M}
    \biggl|
    \int_0^{\zeta}
    \sum_{i=1}^n dZ_{i,t}\cdot q_i^2 \cdot G_{k,\ell}(\lambda_i)
    \biggr|
    \Prto
    0.
\]

As for the finite variation terms, we consider each of the four classes of terms broken out in \eqref{eq:FVterms}.  The estimates for $dS_t,$ $dR^1_t, dR^2_t,$ and $dR^3_t$ are similar
%, while those for $dR^3_t$ are different, 
and so we explain the estimates for $dR^1_t$ and do not discuss the others.
%and $dR^3_t$ in detail and do not discuss the others.

Recall that
\(
dR^1_t=\sum_{i,k,\ell=1}^n -\frac{q_i^4}{2} \mathcal{H}_{k,\ell} \cdot G^{k,\ell} \cdot dt,
\)
where $\mathcal{H}_{k,\ell}$ is given in \eqref{eq:GF} and $G^{k,\ell}$ is given in Corollary~\ref{cor:entry}.  The entries $G^{k,\ell}_{u,r}$ vanish when $u+r < k+\ell.$  In particular, this implies that for considering $dR^{1}_{u,r,t}$ for $1 \leq u,r \leq M$ it suffices to estimate $k,\ell$ with $1 \leq k,\ell \leq 2M.$  Using Remark~\ref{rem:Gkl} and Lemma~\ref{lem:cheb}, we have that
\[
  G^{**}=
  \max_{0 \leq t \leq \zeta} 
  \max_{1 \leq k,\ell \leq 2M} 
  \max_{1 \leq u,r \leq M} 
  |G^{k,\ell}_{u,r,t}|
\]
is tight as a family of random variables in $n$, where we have used that $\sum_{i=1}^n q_i^2=1.$ In a similar way,
\[
  H^*
  =
  \max_{1 \leq i \leq n}
  \max_{0 \leq t \leq \zeta} 
  \max_{1 \leq k,\ell \leq 2M} 
  |\mathcal{H}_{k,\ell,t}(\lambda_{i,t})|\sqrt{n}
\]
is tight.  Hence, 
\[
  \biggl| \int_0^\zeta dR^1_t \biggr| \leq 
  \zeta \cdot 
  (\max_{1 \leq i \leq n} q_i^4) \cdot
  \sqrt{n} \cdot H^* \cdot G^{**} \Prto 0,
\]
by Lemma~\ref{lem:qibound}.

%We next turn to the estimate for
%\(
%    dR^3_{k,\ell,t} = 
%    \sum_{i=1}^n 
%    \frac{q_i^2}{4} \partial_{xy}B_{k,\ell}(x,y) \vert_{x=y=\lambda_i}
%\)

\end{proof}

\subsection{Principal terms}
The remaining terms will produce a nonvanishing contribution to the limit in the regime we consider.  Recalling \eqref{eq:frozen2}, these remaining terms are given by
\[
  dA_t - dA_t' = -2A_t\,dt + \sum_{i=1}^n -dt \cdot \biggl\{\sum_{j \neq i}\frac{1}{\lambda_i - \lambda_j} \biggr\} \cdot q_i^2 \cdot G(\lambda_i).
\]
There are large cancellations between the $2A_t$ term and the sum.  Both are order $\sqrt{n},$ but their leading order behavior cancels.  

To identify an exact leading term, we make a connection to the minor polynomials.
Recall from \eqref{eq:G} that $G(\lambda) = -\partial_\lambda B(x,\lambda) \vert_{x=\lambda}.$ By orthogonality, we have that 
\[
  \sum_{i=1}^n q_i^2\biggl\{\partial_{\lambda \lambda} B(\lambda_i,\lambda)\vert_{\lambda=\lambda_i}  \biggr\}=0,
\]
on account of the orthogonality of $p_{k}$ to lower degree polynomials.  Hence, we have
\[
  \sum_{i=1}^n q_i^2\biggl\{\sum_{j \neq i} \Delta_{\lambda_j}^{\lambda_i} \partial_{\lambda_j} B(\lambda_i,\lambda_j)  \biggr\}
  =
  \sum_{j=1}^n 
  \sum_{i=1}^n 
  q_i^2\biggl\{\Delta_{\lambda_j}^{\lambda_i} \partial_{\lambda_j} B(\lambda_i,\lambda_j)  \biggr\}
  =0,
\]
where we have extended the formula to $i = j$ by continuity, and the second equality follows as the inner sum is identically $0,$ again by degree considerations.  This formula allows us to write
\[
\sum_{i=1}^n \biggl\{\sum_{j \neq i}\frac{1}{\lambda_i - \lambda_j} \biggr\} \cdot q_i^2 \cdot G(\lambda_i) 
 =\frac{-1}{2}
\sum_{i=1}^n \biggl\{\sum_{j \neq i}\frac{ 
  \partial_\lambda B(\lambda_i,\lambda) \vert_{\lambda=\lambda_i}
  +\partial_\lambda B(\lambda_i,\lambda) \vert_{\lambda=\lambda_j}
}{\lambda_i - \lambda_j} \biggr\} \cdot q_i^2 
\]
%or in other words
%\begin{equation}
%  \label{eq:S1}
%  \sum_{i=1}^n q_i^2\biggl\{\sum_{j \neq i} 
%  \frac{ \partial_\lambda B(\lambda_i,\lambda) \vert_{\lambda=\lambda_i}}{\lambda_i-\lambda_j}
%  \biggr\}
%  =
%  \sum_{i=1}^n q_i^2\biggl\{\sum_{j \neq i} 
%  \frac{ \partial_\lambda B(\lambda_i,\lambda) \vert_{\lambda=\lambda_j}}{\lambda_i-\lambda_j}
%  \biggr\}.
%\end{equation}
So, adding and subtracting $\partial_\lambda B(\lambda_j,\lambda) \vert_{\lambda=\lambda_j},$ we can write
\begin{equation}
 \sum_{i=1}^n \biggl\{\sum_{j \neq i}\frac{1}{\lambda_i - \lambda_j} \biggr\} \cdot q_i^2 \cdot G(\lambda_i) 
 =\frac{-1}{2}
 \sum_{i,j:i\neq j}^n
 \Delta_{\lambda_i}^{\lambda_j} \partial_{\lambda_j} B(\lambda_i,\lambda_j) \cdot q_i^2
 +\frac{1}{2}
 \sum_{i,j:i\neq j}^n  \frac{G(\lambda_j)+G(\lambda_i)}{\lambda_i-\lambda_j} \cdot q_i^2
 \label{eq:S2}
\end{equation}
Rewriting the model, we have
\begin{equation}
  \begin{aligned}
  dA_t - dA_t' &= -2A_t\,dt + \frac{n}{2}
 \sum_{i,j=1}^n
 \Delta_{\lambda_i}^{\lambda_j} \partial_{\lambda_j} B(\lambda_i,\lambda_j) \cdot q_i^2 \cdot q_j^2 \\
 &- \frac{n}{2}
 \sum_{i,j=1}^n
 \Delta_{\lambda_i}^{\lambda_j} \partial_{\lambda_j} B(\lambda_i,\lambda_j) \cdot q_i^2 \cdot \biggl\{ q_j^2 - \frac{1}{n} \biggr\} \\
 &+\frac{1}{2}
 \sum_{i,j:i\neq j}^n  \frac{G(\lambda_j)+G(\lambda_i)}{\lambda_i-\lambda_j} \cdot q_i^2 \\
 &-
 \frac{1}{2}
 \sum_{i=1}^n
 \partial_{\lambda_i\lambda_j} B(\lambda_i,\lambda_j) \vert_{\lambda_j=\lambda_i} \cdot q_i^2 \\
 \label{eq:S3}
 \end{aligned}
\end{equation}
The first line can be expressed in terms of the minor polynomials.  Specifically, we define 
\(
G^{(0)}(\lambda)
=
-
 \sum_{i=1}^n
 \Delta_{\lambda_i}^{\lambda} \partial_{\lambda} B(\lambda_i,\lambda) \cdot q_i^2,
\)
in terms of which
\begin{equation}
  G_{k,\ell}^{(0)}(\lambda)
  =-\begin{cases}
    -a_{\ell-1} p^{(0)}_{\ell-2}(\lambda)p_{\ell-1}'(\lambda) + a_\ell p^{(0)}_{\ell-1}(\lambda) p_{\ell}'(\lambda)&  \\
    -a_{\ell-1} p^{(0)}_{\ell-1}(\lambda)p_{\ell-2}'(\lambda) + a_\ell p^{(0)}_{\ell}(\lambda) p_{\ell-1}'(\lambda),  & \text{ if } k = \ell , \\
    -a_\ell (p^{(0)}_{\ell-1}(\lambda)p_{\ell-1}'(\lambda) - p^{(0)}_\ell(\lambda)p_\ell'(\lambda)), & \text{ if } k = \ell + 1 ,\\
    0, & \text{ otherwise.}
  \end{cases}
 \label{eq:BB}
\end{equation}
We will make a comparison between the minor polynomials and the original polynomials to expose the leading order behavior (c.f.\,\eqref{eq:MinorPerturb}).  The last line is an absolute constant multiple of $dR^3,$ which is negligible in the limit (c.f.\,Lemma~\ref{lem:R3}).  

As for the middle two lines, they can be expressed in terms of the fluctuations of the spectral weights:  
\begin{lemma}
  Let $\gamma$ be a smooth contour enclosing all the eigenvalues once in the positive orientation.
  Then
   \[
    \begin{aligned}
 -\sum_{i,j=1}^n
 \Delta_{\lambda_i}^{\lambda_j} \partial_{\lambda_j} B(\lambda_i,\lambda_j) \cdot q_i^2 \cdot \biggl\{ q_j^2 - \frac{1}{n} \biggr\} 
 &= \frac{1}{2\pi i}\oint (s^A_t(z) - s_t(z))G^{(0)}(\sqrt{n} z)\,dz \\ 
    \end{aligned}
  \]
  and
  \[
    \begin{aligned}
    \sum_{i,j:i\neq j}^n  \frac{G(\lambda_j)+G(\lambda_i)}{\lambda_i-\lambda_j} \cdot 
    q_i^2
    %\biggl(q_i^2-\frac{1}{n}\biggr)
    &=\frac{-\sqrt{n}}{(2\pi i)^2}\oint\oint  (s^A_t(z) - s_t(z))s_t(y) \frac{G(\sqrt{n} z)-G(\sqrt{n} y)}{z-y}\,dzdy \\
    &+\frac{2\sqrt{n}}{2\pi i}\oint (s^A_t(z) - s_t(z))s_t(z) G(\sqrt{n} z)\,dz,
    \end{aligned}
  \]
  where the contour integrals are over $\gamma.$
  \label{lem:second}
\end{lemma}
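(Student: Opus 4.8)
The plan is to prove both identities by pure residue calculus, recognizing each eigenvalue sum as a contour integral of Stieltjes transforms and running the residue theorem in reverse. The one structural input I would isolate first is that, for entry indices $(k,\ell)$ of bounded order, the entries of $G(\sqrt n z)$, of $G^{(0)}(\sqrt n z)$, and of $\dfrac{G(\sqrt n z) - G(\sqrt n y)}{z-y}$ are polynomials in $z$ (resp.\ jointly in $(z,y)$): this is immediate from the explicit entry formulas \eqref{eq:G}, \eqref{eq:B}, \eqref{eq:BB} together with the degree bounds $\deg p_\ell = \ell$ and $\deg p^{(j)}_\ell = (\ell - j - 1)_+$ from Section~\ref{sec:minors}. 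Consequently these are entire functions of the contour variables, so the only singularities of the integrands below are the simple poles of $s_t$ and of $s^A_t - s_t$ at the rescaled eigenvalues, where
\[
  s^A_t(z) - s_t(z) = \sum_{i=1}^n \Bigl( q_i^2 - \tfrac1n \Bigr)\frac{1}{\lambda_{i,t}/\sqrt n - z}, \qquad s_t(z) = \frac1n\sum_{i=1}^n\frac{1}{\lambda_{i,t}/\sqrt n - z},
\]
with residues $-(q_i^2-\tfrac1n)$ and $-\tfrac1n$ at $z = \lambda_{i,t}/\sqrt n$.

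For the first identity I would evaluate $\frac{1}{2\pi i}\oint_\gamma (s^A_t(z)-s_t(z))\,G^{(0)}(\sqrt n z)\,dz$ directly by the residue theorem: since $G^{(0)}(\sqrt n\,\cdot)$ is entire, the integral collapses to $\sum_i (q_i^2 - \tfrac1n)G^{(0)}(\lambda_{i,t})$ up to the sign coming from the orientation of $\gamma$, and substituting the definition $G^{(0)}(\lambda) = -\sum_i \Delta^{\lambda}_{\lambda_i}\partial_{\lambda}B(\lambda_i,\lambda)q_i^2$ (equivalently \eqref{eq:BB}) rewrites this as $-\sum_{i,j}\Delta^{\lambda_j}_{\lambda_i}\partial_{\lambda_j}B(\lambda_i,\lambda_j)\,q_i^2(q_j^2 - \tfrac1n)$, which is the claimed left-hand side.

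For the second identity the plan is to first record the algebraic simplification $\sum_{i\neq j}\dfrac{G(\lambda_i)+G(\lambda_j)}{\lambda_i-\lambda_j}q_i^2 = \sum_{i\neq j}\dfrac{G(\lambda_i)(q_i^2 - q_j^2)}{\lambda_i-\lambda_j}$, obtained by symmetrizing the double sum, and then to generate the right-hand side by splitting the symmetric kernel as $\dfrac{G(\sqrt n z) + G(\sqrt n y)}{z-y} = \dfrac{2G(\sqrt n z)}{z-y} - \dfrac{G(\sqrt n z) - G(\sqrt n y)}{z-y}$. The second piece is entire, so integrating $(s^A_t - s_t)(z)\,s_t(y)$ against it over a pair of nested copies of $\gamma$ and applying the residue theorem first in $y$ and then in $z$ produces exactly the off-diagonal sum $\sum_{i\neq j}$ — that is, the first term of the formula — with the diagonal entering only through the removable singularity of this piece, which supplies the $G'(\lambda_i)$ contribution one must reinstate. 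The first piece $\dfrac{2G(\sqrt n z)}{z-y}$ has a genuine pole on the diagonal $z=y$; carrying out the inner $y$-integral collapses it to $2s_t(z)$, and the remaining single integral $\frac{2}{2\pi i}\oint (s^A_t - s_t)(z)s_t(z)G(\sqrt n z)\,dz$, after inserting the factor $\sqrt n$ produced by $\lambda_i - \lambda_j = \sqrt n(\mu_i-\mu_j)$, is the second term of the formula; its double poles at coincident eigenvalues are precisely the $G'$ pieces. Matching the two contributions against the symmetrized sum then gives the identity.

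The step I expect to be the main obstacle is the bookkeeping around the diagonal $i=j$: one must track the contour nesting carefully (so that the pole at $z=y$ of the first kernel piece is enclosed exactly as intended), compute the double-pole residues produced by the product $(s^A_t-s_t)\cdot s_t$ correctly, and reinstate or cancel the $i=j$ terms so that $\sum_{i\neq j}$ — and not $\sum_{i,j}$ — survives; I would also pay close attention to the $\sqrt n$ scaling factors and to the orientation convention for $\gamma$, which together pin down all the overall signs.
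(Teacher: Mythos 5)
Your proposal is correct and follows essentially the same route as the paper: the first identity is the entrywise residue evaluation of $\oint(s^A_t-s_t)(z)f(z)\,dz$ against the polynomial entries of $G^{(0)}(\sqrt n\,\cdot)$, and for the second you use the same antisymmetry reduction (replacing $q_i^2$ by a centered weight) and the same kernel split $\tfrac{G(\sqrt n z)+G(\sqrt n y)}{z-y}=\tfrac{2G(\sqrt n z)}{z-y}-\tfrac{G(\sqrt n z)-G(\sqrt n y)}{z-y}$, with the diagonal $G'$ contribution from the divided-difference piece cancelling against the double-pole contribution of $(s^A_t-s_t)s_t$ exactly as in the paper's \eqref{eq:residues}. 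The overall orientation sign you flag is an ambiguity already present in the paper's own statement of \eqref{eq:residues}, not a gap in your argument.
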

\begin{proof}
  By antisymmetry, we have that
  \[
    \sum_{i,j:i\neq j}^n  \frac{G(\lambda_j)+G(\lambda_i)}{\lambda_i-\lambda_j} \cdot q_i^2
    =
    \sum_{i,j:i\neq j}^n  \frac{G(\lambda_j)+G(\lambda_i)}{\lambda_i-\lambda_j} \cdot 
    \biggl(q_i^2-\frac{1}{n}\biggr).
  \]
  This we now break into two parts
  \[
    \sum_{i,j:i\neq j}^n  \frac{G(\lambda_j)+G(\lambda_i)}{\lambda_i-\lambda_j} \cdot 
    \biggl(q_i^2-\frac{1}{n}\biggr)
    =
    -\sum_{i,j:i\neq j}^n  \frac{G(\lambda_i)-G(\lambda_j)}{\lambda_i-\lambda_j} \cdot 
    \biggl(q_i^2-\frac{1}{n}\biggr)
    +\sum_{i,j:i\neq j}^n  \frac{2G(\lambda_i)}{\lambda_i-\lambda_j} \cdot 
    \biggl(q_i^2-\frac{1}{n}\biggr)
  \]
  For entire functions $f,$ by a residue computation
  \begin{equation}
    \label{eq:residues}
    \begin{aligned}
      \frac{1}{2\pi i} \oint (s^A_t(z) - s_t(z))f(z)\,dz
      =
      &\sum_{i=1}^n \biggl\{q_i^2-\frac{1}{n}\biggr\}f(\lambda_{i,t}/\sqrt{n}), \text{ and } \\
      \frac{1}{2\pi i} \oint (s^A_t(z) - s_t(z)) s_t(z) f(z)\,dz
      =
      &\frac{1}{\sqrt{n}}
      \sum_{i=1}^n \biggl\{q_i^2-\frac{1}{n}\biggr\}\cdot \biggl\{\sum_{j \neq i}\frac{f(\lambda_{i,t}/\sqrt{n})}{\lambda_{i,t}-\lambda_{j,t}}\biggr\}
      \\
      +&\frac{1}{2n}
      \sum_{i=1}^n \biggl\{q_i^2-\frac{1}{n}\biggr\}f'(\lambda_{i,t}/\sqrt{n}),
    \end{aligned}
  \end{equation}
  for a contour enclosing all the eigenvalues once in the positive orientation. Hence, appropriately accounting for $G'$ terms, we get the desired formula.
\end{proof}

\subsection{Cancellation at leading order}

We consider the leading order behavior of the sum
\[
 \sum_{i,j=1}^n
 \Delta_{\lambda_i}^{\lambda_j} \partial_{\lambda_j} B(\lambda_i,\lambda_j) \cdot q_i^2\cdot q_j^2.
\]
On account of \eqref{eq:BB}, we are led to consider the following asymptotics. 
\begin{lemma}

  \begin{align*}
    &\sum_{j=1}^n p_k^{(0)}(\lambda_j) p_{k}'(\lambda_j)\cdot q_j^2 = 
    \frac{k}{a_1 a_k} 
    + n^{-3/2}\sum_{j=1}^{k-1} 16 (a_j-\tfrac{\sqrt{n}}{2}) (k-2j) 
    + n^{-3/2} 8k(a_1-a_k)
    + o(n^{-3/2}),
    \\
  &\sum_{j=1}^n p_k^{(0)}(\lambda_j) p_{k+1}'(\lambda_j)\cdot q_j^2 
  =  \frac{(b_1+\cdots + b_k-kb_{k+1}) }{a_1a_k a_{k+1}}
    + n^{-3/2}\sum_{j=1}^{k} 8b_j(k+1-2j)
  +o(n^{-3/2}),\\
    &\sum_{j=1}^n p_k^{(0)}(\lambda_j) p_{k-1}'(\lambda_j)\cdot q_j^2
    = n^{-3/2}\sum_{j=1}^{k} 8b_j(k+1-2j)
    + o(n^{-3/2}),
  \end{align*}
  with the errors uniform in $0 \leq t \leq T \wedge \sigma_K \wedge \tau_R.$ 
  \label{lem:mpeval}
\end{lemma}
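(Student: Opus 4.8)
The plan is to treat each of the three quantities as a discrete inner product $\langle f,g\rangle:=\sum_{j=1}^n q_j^2 f(\lambda_j)g(\lambda_j)$, and to exploit that, since $k$ is bounded while $n\to\infty$, every polynomial occurring below has degree at most $2k-2\ll n$; hence the orthonormality relation $\langle p_a,p_b\rangle=\delta_{a,b}$, the Christoffel--Darboux formulas, and the minor-polynomial identities of Section~\ref{sec:commutator} all hold \emph{exactly}, with no corrections from the bottom of the matrix. Throughout I would work on the stopped interval $t\le T\wedge\sigma_K\wedge\tau_R$, on which Lemma~\ref{lem:cheb} supplies the quantitative approximations $p_{k,t}(\sqrt n x)\to U_k(x)$, $a_{j+1,t}p^{(j)}_{k,t}(\sqrt n x)\to U_{k-j-1}(x)$, $\sqrt n\,p_{k,t}'(\sqrt n x)\to U_k'(x)$ locally uniformly, and on which $|a_{j,t}-\tfrac{\sqrt n}{2}|\le R$ and $|b_{j,t}|\le R$ for the relevant indices.

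First I would record the exact identities, valid for any Jacobi matrix with $k$ in the stated range: writing $p_\ell'=\sum_{m=1}^\ell p^{(m-1)}_\ell p_{m-1}$ (equivalently, using \eqref{eq:cCD}) and invoking orthogonality gives $\langle p_{k-1},p_{k-1}'\rangle=0$ and $\langle p_{k-1},p_k'\rangle=k/a_k$, and a short computation with the differentiated recurrence $a_{k+1}p_{k+1}'=p_k+(x-b_{k+1})p_k'-a_k p_{k-1}'$ then gives $\langle p_{k-1},p_{k+1}'\rangle=(b_1+\cdots+b_k-kb_{k+1})/(a_ka_{k+1})$. Next I would use the perturbation expansion \eqref{eq:MinorPerturb} to write $p_k^{(0)}=\tfrac{a_{k-1}}{a_1a_k}p_{k-1}+C_k$, where $C_k$ collects the terms weighted by $b_j-b_{j+1}$ and by $a_j-a_{j+1}$. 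Pairing the first piece against $p_\ell'$ and using the exact identities reproduces $\tfrac{a_{k-1}}{a_1a_k}$ times the displayed leading coefficient; writing $\tfrac{a_{k-1}}{a_1a_k^2}=\tfrac1{a_1a_k}+\tfrac{a_{k-1}-a_k}{a_1a_k^2}$ and using $a_j=\tfrac{\sqrt n}{2}+O(1)$ turns the discrepancy in the $\ell=k$ case into $\tfrac{8k(a_{k-1}-a_k)}{n^{3/2}}+o(n^{-3/2})$, while for $\ell=k\pm1$ the leading coefficient is itself $O(n^{-3/2})$, so this discrepancy is $o(n^{-3/2})$ there.

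Then I would handle the contribution of $C_k$. Since $b_j-b_{j+1}$ and $a_j-a_{j+1}$ are $O(R)$ and each of $p^{(j-1)}_{k-1}$, $p^{(0)}_j$ is $O(n^{-1/2})$ uniformly on the spectrum by Lemma~\ref{lem:cheb}, $C_k$ is $O(n^{-1})$ uniformly, so $\langle C_k,p_\ell'\rangle=O(n^{-3/2})$; its leading $n^{-3/2}$-coefficient is obtained by replacing every polynomial in the product by its Chebyshev limit and every discrete average $\sum_j q_j^2(\cdot)$ by $\int_{-1}^1(\cdot)\,\mathfrak{s}$ --- justified by weak convergence of the weighted spectral measure to the semicircle, which follows from Propositions~\ref{prop:stieltjes}--\ref{prop:stieltjes2} --- and then using $\int_{-1}^1 U_aU_b\,\mathfrak{s}=\delta_{a,b}$ to collapse the resulting finite sums of Chebyshev integrals. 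Finally I would apply Abel summation to rewrite $\sum_j(a_j-a_{j+1})\phi(j)$ in terms of $a_1-a_k$ and $\sum_j(a_j-\tfrac{\sqrt n}{2})(\phi(j-1)-\phi(j))$ (and similarly for the $b$-differences), combine with the $\tfrac{8k(a_{k-1}-a_k)}{n^{3/2}}$ term from the previous step, and verify that the surviving Chebyshev integrals produce exactly the coefficients $16(k-2j)$, $8k$, and $8b_j(k+1-2j)$; uniformity of the $o(n^{-3/2})$ error over the stopped interval is inherited from Lemma~\ref{lem:cheb} and Proposition~\ref{prop:tightness}.

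The hard part is the combination of the last two steps: one must carry the perturbation expansion \eqref{eq:MinorPerturb} to precisely first order, keep track of which $a$- and $b$-differences survive after Abel summation, and evaluate the Chebyshev integrals so that the asserted integer coefficients emerge. Everything else --- the exact identities of the first step, the Chebyshev approximation, and the semicircle convergence --- is either elementary or already available from Section~\ref{sec:commutator} and the estimates proved above.
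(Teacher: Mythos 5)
Your proposal is correct and follows essentially the same route as the paper's proof: expand $p_k^{(0)}$ via the perturbation formula \eqref{eq:MinorPerturb}, evaluate the leading piece exactly through orthogonality and leading coefficients, pass to Chebyshev polynomials and the semicircle measure for the $n^{-3/2}$ corrections, and finish with an Abel-summation rearrangement (the paper's identity $H_{k,j}-H_{k,j-1}=2(k-2j)$ plays exactly this role). The only difference is bookkeeping — you keep $\tfrac{a_{k-1}}{a_1a_k}p_{k-1}$ as the leading piece and split off $\tfrac{a_{k-1}-a_k}{a_1a_k^2}$ afterwards, whereas the paper uses $\tfrac{1}{a_1}p_{k-1}$ and absorbs the $(a_{k-1}-a_k)$ term into the correction — which does not change the argument.
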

\begin{proof}
  We begin with the first asymptotic.
  Equation \eqref{eq:MinorPerturb} states that to leading order,
  \[
    p_{k}^{(0)}(x) = \frac{p_{k-1}{(x)}}{a_1}.
  \]
  Hence, for the first identity, we have
  \begin{align*}
    \sum_{j=1}^n 
    \frac{p_{k-1}{(\lambda_j)}}{a_1}
    p_k'(\lambda_j)
    \cdot q_j^2
    &=
    \frac{k }{a_1a_k}. 
  \end{align*}

  On the other hand, we also have the subleading behavior of $p_{k}^{(0)}$ given by
  \begin{equation*}
  \begin{aligned}
  p^{(0)}_{k}(x) - \frac{1}{a_1}p_{k-1}(x)
  &=
  \frac{a_{k-1}-a_k}{a_{k}a_1}p_{k-1}(x) \\
  &+
  \frac{a_{k-1}}{a_{k}}\sum_{j=1}^{k-1} (b_{j} - b_{j+1})p^{(j-1)}_{k-1}(x)p_{j}^{(0)}(x) 
  \\
  &+
  \frac{a_{k-1}}{a_{k}}\sum_{j=1}^{k-2} (a_{j} - a_{j+1})
  (
  p^{(j)}_{k-1}(x)p_{j}^{(0)}(x) 
  +p^{(j-1)}_{k-1}(x)p_{j+1}^{(0)}(x) 
  ). 
\end{aligned}
\end{equation*}
Hence, on scaling up by $n,$ we get locally uniformly
  \begin{equation}
  \label{eq:perturbationlimit}
  \begin{aligned}
    \frac{n}{4}(p^{(0)}_{k}(x) - \frac{1}{a_1}p_{k-1}(x))
  &\to
  (a_{k-1}-a_k)U_{k-1}(x) \\
  &+
  \sum_{j=1}^{k-1} (b_{j} - b_{j+1})U_{k-j-1}(x)U_{j-1}(x)
  \\
  &+
  \sum_{j=1}^{k-2} (a_{j} - a_{j+1})
  (
  U_{k-j-2}(x)U_{j-1}(x)
  +U_{k-j-1}(x)U_{j}(x)
  ). 
\end{aligned}
\end{equation}
We then need to evaluate the integral of this asymptotic against $U_k'(x)\mathfrak{s}(x)\,dx,$ for which purpose, we will need the following Chebyshev polynomial identities
\begin{align}
  U_k'(x) &= \sum_{j=0}^{k/2} 2(k-2j) U_{k-2j-1}(x) \label{eq:dU} \\
  U_k(x)U_\ell(x) &= \sum_{j=0}^{\ell} U_{k-\ell+2j}(x), \text{ if } k \geq \ell. \label{eq:UU}
\end{align}
By parity considerations, some terms cancel.  For any $1 \leq j \leq k-1,$ $U_{k-j-1}(x)U_{j-1}(x)$ is a sum over $U_{p}(x)$ where $p$ has the same parity as $k.$  In particular, it follows that
\[
  \int_{-1}^{1} U_k'(x)
  \left\{
  \sum_{j=1}^{k-1} (b_{j} - b_{j+1})U_{k-j-1}(x)U_{j-1}(x)
\right\}
  \mathfrak{s}(x)\,dx
  =0.
\]
%For the terms involving $(a_j-a_{j+1}),$ it is simpler to sum--by--parts first.  This allows us to exploit that (as a consequence of \eqref{eq:UU})
%\begin{align*}
%  U_{k-j-2}(x)U_{j-1}(x)
%  -U_{k-j-3}(x)U_{j}(x)
%  &= H_{k,j}(x) \\
%  U_{k-j-1}(x)U_{j}(x)
%  -U_{k-j-2}(x)U_{j+1}(x)
%  &= H_{k,j}(x),
%\end{align*}
%where 
%\[
%  H_{k,j} = 
%  \begin{cases}
%    - U_{k-2j-3}(x), & \text{ if } k - 2j - 3 \geq 0, \\
%    U_{2j-k+1}(x), & \text{ if } k - 2j - 3 < 0.
%  \end{cases}
%\]

%Turning to the summation--by--parts,
%\begin{align*}
%  &\sum_{j=1}^{k-2} (a_{j} - a_{j+1})
%  (
%  U_{k-j-2}(x)U_{j-1}(x)
%  +U_{k-j-1}(x)U_{j}(x)
%  )
%  =  \\
%  &a_1( 2U_{k-1}(x) + U_{k-3}(x) ) \\
%  +&\sum_{j=1}^{k-2}
%  2a_{j+1}H_{k,j}(x)
%\end{align*}
%we have that

On the other hand, for $1 \leq j \leq k-1$
\begin{align*}
  &\int_{-1}^{1} U_k'(x)
  (
  U_{k-j-2}(x)U_{j-1}(x)
  +U_{k-j-1}(x)U_{j}(x)
  )
  \mathfrak{s}(x)\,dx \\
  &= 
  \sum_{\ell=1}^{ j \wedge (k-j-1) } 2(k-2\ell)
  +\sum_{\ell=0}^{ j \wedge (k-j-1) } 2(k-2\ell).
\end{align*}
Define the array, for $1 \leq j < k-1$
  \[
    H_{k,j} = \sum_{\ell=1}^{j \wedge (k-j-1)} 2(k-2\ell),
  \]
  and $H_{k,j} = 0$ for $j \geq k-1.$
  Then we can express
\[
  n^{3/2}\sum_{j=1}^n p_k'(\lambda_j)(p^{(0)}_{k}(\lambda_j) - \frac{1}{a_1}p_{k-1}(\lambda_j))q_j^2 \to 
  \sum_{j=1}^{k-1} 4(a_j - a_{j+1})(2H_{k,j}+2k).
\]

For the second term we begin with the observation that for any $k \in \N$
\begin{equation}
\label{eq:pkcoeffiecients}
p_k(x) = \frac{1}{a_1\cdots a_k} x^k - \frac{b_1+\cdots +b_k}{a_1\cdots a_k}x^{k-1} + ...
\end{equation}
Therefore,
\[
  p_{k+1}'(x) - \frac{(k+1)p_k(x)}{a_{k+1}}
  = 
  -k\frac{b_1+\cdots +b_{k+1}}{a_1\cdots a_{k+1}}x^{k-1}
  +(k+1)\frac{b_1+\cdots +b_{k}}{a_1\cdots a_{k+1}}x^{k-1}
  +O(x^{k-2})
\]
Hence,
\begin{align*}
  n^{3/2}
\sum_{j=1}^n 
    \frac{p_{k-1}{(\lambda_j)}}{a_1}
    p_{k+1}'(\lambda_j)\cdot q_j^2
    =
    \frac{n^{3/2}(b_1+\cdots + b_k-kb_{k+1}) }{a_1a_k a_{k+1}}.
%    \to
%    8(b_1+\cdots + b_k-kb_{k+1}).
  \end{align*}
We again use equation \eqref{eq:perturbationlimit} but observe that the parity is the opposite of the previous case which leaves us with the terms of the form
  \[
    \begin{aligned}
      \frac{n^{3/2}}{4}
    &\sum_{j=1}^n 
    \biggl(p^{(0)}_{k}(\lambda_i) - \frac{p_{k-1}(\lambda_i)}{a_1}\biggr)
    p_{k+1}'(\lambda_j)\cdot q_j^2 \\
    &\to
      \int_{-1}^{1} U_{k+1}'(x)
  \left\{
  \sum_{j=1}^{k-1} (b_{j} - b_{j+1})U_{k-j-1}(x)U_{j-1}(x)
\right\}
  \mathfrak{s}(x)\,dx \\
  &=\sum_{j=1}^{k-1}(b_j-b_{j+1})\sum_{m=1}^{(k-j)\wedge(j)}2(k+1-2m).
    \end{aligned}
  \]

For the final term, we have by degree considerations that
\[
  n^{3/2}
\sum_{j=1}^n 
    \frac{p_{k-1}{(\lambda_j)}}{a_1}
    p_{k-1}'(\lambda_j)\cdot q_j^2
    =
    0.
\]
And we have that
  \[
    \begin{aligned}
      \frac{n^{3/2}}{4}
    &\sum_{j=1}^n 
    \biggl(p^{(0)}_{k}(\lambda_i) - \frac{p_{k-1}(\lambda_i)}{a_1}\biggr)
    p_{k-1}'(\lambda_j)\cdot q_j^2 \\
    &\to
      \int_{-1}^{1} U_{k-1}'(x)
  \left\{
  \sum_{j=1}^{k-1} (b_{j} - b_{j+1})U_{k-j-1}(x)U_{j-1}(x)
\right\}
  \mathfrak{s}(x)\,dx \\
  &=\sum_{j=1}^{k-1}(b_j-b_{j+1})\sum_{m=1}^{(k-j)\wedge(j)}2(k+1-2m),
    \end{aligned}
  \]
  where we observe the limit agrees with that of the previous case.
  
  Lastly we use the identity
 \begin{equation}
 \label{eq:Hdiff}
 H_{k,j}-H_{k,j-1} = 2(k-2j)
 \end{equation}
 for $j<k$ to simplify the solution.
\end{proof}

\begin{corollary}
  Define the operator $\mathcal{F}$ on Jacobi matrices by
  \[
    \mathcal{F}(A)_{k,\ell}
    =
    \begin{cases}
      -b_\ell(4\ell-2)+
       4\sum_{j=1}^{\ell-1} b_j & \text{ if } k = \ell , \\
      -2 (\ell+1)a_\ell- 2\ell a_{\ell-1} 
      + 4\sum_{j=1}^{\ell-2} a_j 
      & \text{ if } k = \ell + 1 ,\\
    0, & \text{ otherwise.}
    \end{cases}
  \]
  Uniformly in $1 \leq k,\ell \leq M$ and $0 \leq t \leq T \wedge \sigma_K \wedge \tau_R.$
  \begin{equation}
    -2A_t +\frac{n}{2}
    \sum_{i,j=1}^n
    \Delta_{\lambda_i}^{\lambda_j} \partial_{\lambda_j} B_{k,\ell}(\lambda_i,\lambda_j) \cdot q_i^2 \cdot q_j^2 
    -\mathcal{F}(A_t - D\sqrt{n})_{k,\ell} 
     \to 0
  \end{equation}
  \label{cor:lead}
\end{corollary}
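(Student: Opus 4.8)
The plan is to evaluate the double sum on the left-hand side to order $o(1)$ using Lemma~\ref{lem:mpeval}, and to exhibit explicitly the cancellation with $-2A_t$. First, recall from the definition of $G^{(0)}$ in this subsection that $\sum_{i=1}^n \Delta_{\lambda_i}^{\lambda_j}\partial_{\lambda_j}B_{k,\ell}(\lambda_i,\lambda_j)\,q_i^2 = -G^{(0)}_{k,\ell}(\lambda_j)$ (the $i=j$ term taken by continuity), so the double sum equals $-\tfrac n2\sum_{j=1}^n q_j^2\,G^{(0)}_{k,\ell}(\lambda_j)$. Inserting the explicit form \eqref{eq:BB} of $G^{(0)}_{k,\ell}$, this quantity is nonzero only for $k=\ell$ and $k=\ell+1$, and in each case it is a fixed linear combination, with coefficients $a_{\ell-1}$ or $a_\ell$, of the three sums $\sum_j q_j^2\,p^{(0)}_m(\lambda_j)\,p'_{m'}(\lambda_j)$ with $m\in\{\ell-2,\ell-1,\ell\}$ and $m'\in\{m-1,m,m+1\}$ --- precisely the three families evaluated in Lemma~\ref{lem:mpeval}.

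Next I would substitute the asymptotics of Lemma~\ref{lem:mpeval} and write $a_j=\tfrac{\sqrt n}{2}+\tilde a_j$, where $\tilde a_j=(A_t-D\sqrt n)_{j+1,j}$ is bounded by $R$ for $t\le\tau_R$, and likewise $b_j=(A_t-D\sqrt n)_{j,j}$ is bounded by $R$. For $k=\ell+1$ only the first identity of Lemma~\ref{lem:mpeval} enters; its leading term $m/(a_1 a_m)$, multiplied by $a_\ell\cdot\tfrac n2$, contributes at order $\sqrt n$, and expanding the ratios $a_\ell/a_{\ell-1}$ and $n/a_1$ in powers of $\tilde a_j/\sqrt n$ one sees that this $\sqrt n$-scale term equals $-2A_t{}_{\,\ell+1,\ell}=-\sqrt n-2\tilde a_\ell$ up to an $O(1)$ remainder, so the divergent parts cancel. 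The surviving $O(1)$ contributions come from the subleading terms of that expansion together with the corrections $n^{-3/2}\sum_j 16(a_j-\tfrac{\sqrt n}{2})(m-2j)$ and $n^{-3/2}8m(a_1-a_m)$ of the first identity, which contribute at order one after multiplication by $a_\ell\cdot\tfrac n2 \sim\tfrac12 n^{3/2}$. In the diagonal case $k=\ell$ only the second and third identities of Lemma~\ref{lem:mpeval} appear, there is no $\sqrt n$-scale term, and after multiplication by $\tfrac n2$ every contribution is $O(1)$; these are to be collected and matched against $-2b_\ell-\mathcal{F}(A_t-D\sqrt n)_{\ell,\ell}$. Throughout, the $o(n^{-3/2})$ errors of Lemma~\ref{lem:mpeval} multiplied by $a\cdot\tfrac n2=O(n^{3/2})$ are $o(1)$; since Lemmas~\ref{lem:cheb} and~\ref{lem:mpeval} hold with errors uniform for $0\le t\le T\wedge\sigma_K\wedge\tau_R$ and $k,\ell$ range over the finite set $\{1,\dots,M\}$, the resulting convergence is uniform in $k,\ell$ and $t$.

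The main obstacle is the final bookkeeping step: one must verify that the assembled $O(1)$ terms --- a combination of $\sum_j \tilde a_j(m-2j)$ and $\sum_j b_j(m+1-2j)$ over the relevant ranges, the boundary quantities $\tilde a_1-\tilde a_m$ and $b_1+\cdots+b_{m-1}-(m-1)b_m$, and the $\tilde a$-expansion coefficients of the ratios of the $a_j$ --- collapse exactly to the explicit formula for $\mathcal{F}(A_t-D\sqrt n)_{k,\ell}$ in each of the two cases. This is a finite but intricate algebraic identity; the telescoping relation $H_{k,j}-H_{k,j-1}=2(k-2j)$ of \eqref{eq:Hdiff}, together with the Chebyshev identities \eqref{eq:dU} and \eqref{eq:UU} already used in proving Lemma~\ref{lem:mpeval}, are the tools required, and it is prudent to confirm the identity first at small $\ell$ (say $\ell=1,2$) before handling the general index.
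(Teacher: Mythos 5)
Your proposal takes essentially the same route as the paper, whose entire proof is the substitution of the asymptotics of Lemma~\ref{lem:mpeval} into \eqref{eq:BB}: you correctly reduce the double sum to $-\tfrac{n}{2}\sum_j q_j^2\,G^{(0)}_{k,\ell}(\lambda_j)$, identify exactly which of the three identities of Lemma~\ref{lem:mpeval} enter in the off-diagonal and diagonal cases, and locate the $\sqrt n$-order cancellation and the $O(1)$ bookkeeping that yields $\mathcal{F}$. One small caveat before executing: with the signs of \eqref{eq:BB} the divergent part of the double sum works out to $+2a_\ell=+2A_{t,\ell+1,\ell}$ (not $-2A_{t,\ell+1,\ell}$ as written in your sketch), which is precisely what cancels the $-2A_t$ in the statement.
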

\begin{proof}
  We arrive at these asymptotics substituting the asymptotics of Lemma~\ref{lem:mpeval} into \eqref{eq:BB}. 
\end{proof}

\subsection{Convergence of the model}

Recalling \eqref{eq:S3} for convenience
\begin{equation*}
  \begin{aligned}
  dA_t - dA_t' &= -2A_t\,dt + \frac{n}{2}
 \sum_{i,j=1}^n
 \Delta_{\lambda_i}^{\lambda_j} \partial_{\lambda_j} B(\lambda_i,\lambda_j) \cdot q_i^2 \cdot q_j^2 \\
 &- \frac{n}{2}
 \sum_{i,j=1}^n
 \Delta_{\lambda_i}^{\lambda_j} \partial_{\lambda_j} B(\lambda_i,\lambda_j) \cdot q_i^2 \cdot \biggl\{ q_j^2 - \frac{1}{n} \biggr\} \\
 &+\frac{1}{2}
 \sum_{i,j:i\neq j}^n  \frac{G(\lambda_j)+G(\lambda_i)}{\lambda_i-\lambda_j} \cdot q_i^2 \\
 &-
 \frac{1}{2}
 \sum_{i=1}^n
 \partial_{\lambda_i\lambda_j} B(\lambda_i,\lambda_j) \vert_{\lambda_j=\lambda_i} \cdot q_i^2 \\
 \end{aligned}
\end{equation*}
we have shown $dA'_t$ is negligible in the limit (Proposition~\ref{prop:dAt}).  The final line of this equaltity is negligible by the same argument, after identifying it as a multiple of $dR^3_t$ for the entries we consider.  In Corollary \ref{cor:lead} we have identified the asymptotics of the first line, showing it is essentially a linear function of the entries of $A_t-D\sqrt{n}.$  Finally, in Lemma~\ref{lem:second} we have represented the middle two lines as contour integrals against $s_t^A(z) -s_t(z)$ and $s_t(z).$  By the convergenece of these Stieltjes transforms (Proposition~\ref{prop:stieltjes}) and the various polynomials (Lemma~\ref{lem:cheb}), the contour integrals in Lemma~\ref{lem:second} converge to time--independent contour integrals.  All that remains to complete the proof is identify these limiting contour integrals.
In summary, we have that
\begin{equation}
  \begin{aligned}
    dA_t &= \mathcal{F}(A_t - D\sqrt{n}) \\
  &+\frac{n}{4\pi i}\oint (s^A_t(z) - s_t(z))G^{(0)}(\sqrt{n} z)\,dz \\ 
  &+\frac{-\sqrt{n}}{2(2\pi i)^2}\oint\oint  (s^A_t(z) - s_t(z))s_t(y) \frac{G(\sqrt{n} z)-G(\sqrt{n} y)}{z-y}\,dzdy \\
  &+\frac{\sqrt{n}}{2\pi i}\oint (s^A_t(z) - s_t(z))s_t(z) G(\sqrt{n} z)\,dz + o(1)
%  &+\frac{n}{4\pi i}\oint (s^A_t(z) - s_t(z))G^{(0)}(\sqrt{n} z)\,dz \\ 
%  &+\frac{-\sqrt{n}}{2(2\pi i)^2}\oint\oint  (s^A_t(z) - s_t(z))s_t(y) \frac{G(\sqrt{n} z)-G(\sqrt{n} y)}{z-y}\,dzdy \\
%  &+\frac{\sqrt{n}}{2\pi i}\oint (s^A_t(z) - s_t(z))s_t(z) G(\sqrt{n} z)\,dz,
  \label{eq:S4}
\end{aligned}
\end{equation}
with the error vanishing uniformly over bounded order principal minors and uniformly in $0 \leq t \leq T \wedge \tau_R \wedge \sigma_K.$ 

Define the semi--infinite Jacobi matrix
\begin{equation}
    \UM(x,y)_{k,\ell}
    =
    \frac{1}{2}
    \begin{cases}
      - U_{\ell-2}(x)U_{\ell-1}(y) + U_{\ell-1}(x) U_{\ell}(y)&  \\
      -U_{\ell-1}(x)U_{\ell-2}(y) +U_{\ell}(x) U_{\ell-1}(y),  & \text{ if } k = \ell, \\
      -U_{\ell-1}(x)U_{\ell-1}(y) + U_\ell(x)U_\ell(y), & \text{ if } k = \ell + 1,\\
      0, & \text{ otherwise,}
    \end{cases}
    \label{eq:UM}
\end{equation}
All expressions involving $G$ and $B$ scale to expressions involving $\UM.$  For example, we have that uniformly in bounded order principal minors and uniformly in $0 \leq t \leq T \wedge \tau_R \wedge \sigma_K$ 
\[
  \frac{\sqrt{n}}{2\pi i}\oint (s^A_t(z) - s_t(z))s_t(z) G(\sqrt{n} z)\,dz
  \to
  \frac{-1}{4\pi i}\oint \SQ(z)\SCS(z) \partial_z \UM(z,z)\,dz.
\]
A similar limit can be given for the second contour integral of \eqref{eq:S4}.  
Specifically,
\[
  \begin{aligned}
  \frac{-\sqrt{n}}{2(2\pi i)^2}\oint\oint  (s^A_t(z) - s_t(z))s_t(y) 
  &\frac{G(\sqrt{n} z)-G(\sqrt{n} y)}{z-y}\,dzdy \\
  &\to
  \frac{1}{2(2\pi i)^2}\oint\oint 
  \SQ(z)\SCS(y)
  (\Delta_z^y + \Delta_y^z)\partial_z\UM(y,z)
  \,dydz.\\
\end{aligned}
\]
By degree considerations,
\[
  \frac{1}{2\pi i}\oint
  \SCS(y)
  \Delta_z^y\partial_z\UM(y,z)\,dy
  =\int_{-1}^{1}\Delta_z^y\partial_z\UM(y,z)\mathfrak{s}(y)\,dy=0.
\]
As for the first contour integral of \eqref{eq:S4}, it converges to an expression involving the minor polynomials of the Chebyshev polynomials of the second kind, which we can express using contour integrals as
\[
  \frac{n}{4\pi i}\oint (s^A_t(z) - s_t(z))G^{(0)}(\sqrt{n} z)\,dz 
  \to
  \frac{-1}{2(2\pi i)^2}\oint\oint 
  \SQ(z)\SCS(y)
  \Delta_y^z \partial_z\UM(y,z)
  \,dydz.
\]
In summary we have $A_t - \sqrt{n}D$ converges to the solution of 
\begin{equation}
  \begin{aligned}
  dA_t &= \mathcal{F}(A_t) 
  -\frac{1}{4\pi i}\oint \SQ(z)\SCS(z) \partial_z \UM(z,z)\,dz.
%  &+\frac{n}{4\pi i}\oint (s^A_t(z) - s_t(z))G^{(0)}(\sqrt{n} z)\,dz \\ 
%  &+\frac{-\sqrt{n}}{2(2\pi i)^2}\oint\oint  (s^A_t(z) - s_t(z))s_t(y) \frac{G(\sqrt{n} z)-G(\sqrt{n} y)}{z-y}\,dzdy \\
%  &+\frac{\sqrt{n}}{2\pi i}\oint (s^A_t(z) - s_t(z))s_t(z) G(\sqrt{n} z)\,dz,
\end{aligned}
\end{equation}

{\small

\bibliographystyle{abbrv}
\bibliography{rmt}

}

\end{document}